\DeclareMathOperator{\tr}{tr}
\DeclareMathOperator{\diam}{diam}
\numberwithin{equation}{section}
\newcommand{\rr}{\ensuremath{\mathbb{R}}}
\newcommand{\zz}{\ensuremath{\mathbb{Z}}}
\newcommand{\ep}{\ensuremath{\varepsilon}}
\newcommand{\bdry}{\ensuremath{\partial}}
\newcommand{\alphasemi}[2]{\ensuremath{[#1]_{C^{0,\alpha}(#2)}}}
\newcommand{\etasemi}[2]{\ensuremath{[#1]_{C^{0,\eta}(#2)}}}
\newcommand{\linfty}[2]{\ensuremath{||#1||_{L^{\infty}(#2)}}}
\newcommand{\conealpha}[2]{\ensuremath{||#1||_{C^{1,\alpha}(#2)}}}
\newcommand{\czeroone}[2]{\ensuremath{||#1||_{C^{0,1}(#2)}}}
\newcommand{\conegamma}[2]{\ensuremath{||#1||_{C^{1,\gamma}(#2)}}}
\newcommand{\etanorm}[2]{\ensuremath{||#1||_{C^{0,\eta}(#2)}}}
\newcommand{\tu}{\ensuremath{\tilde{u}}}
\newcommand{\bu}{\ensuremath{\bar{u}}}
\newtheorem{thm}{Theorem}[section]
\newtheorem{cor}[thm]{Corollary}
\newtheorem{prop}[thm]{Proposition}
\newtheorem{lem}[thm]{Lemma}
\theoremstyle{definition}
\newtheorem{defn}[thm]{Definition}
\theoremstyle{remark}
\newtheorem{rem}[thm]{Remark}
\begin{document}
\title[Error estimates]{Error estimates for approximations of nonhomogeneous nonlinear uniformly elliptic equations}
\author{Olga Turanova}
\address{Department of Mathematics, University of Chicago, 5734 S. University Avenue, Chicago, IL 60637}
\email{turanova@math.uchicago.edu}
\date{June 5, 2015}
\keywords{fully nonlinear elliptic equations, finite difference methods}
\subjclass[2010]{35J60, 65N06, 35B05}

\begin{abstract}
We obtain an error estimate between viscosity solutions and $\delta$-viscosity solutions of  nonhomogeneous fully nonlinear uniformly elliptic equations. The main  assumption, besides uniform ellipticity, is that the nonlinearity is  Lipschitz-continuous in space with linear growth in the Hessian. We also establish a rate of convergence for monotone and consistent finite difference approximation schemes for such equations.
\end{abstract}
\maketitle

\section{Introduction}
We prove an estimate between viscosity solutions and $\delta$-viscosity solutions of the boundary value problem
\begin{equation}\left\{
\begin{array}{l l}
\label{eqn for u}
F(D^2 u, x)=f(x) &\quad \text{ in }U \subset \rr^n,\\
u=g &\quad \text{ on }\bdry U,
\end{array}\right.
\end{equation}
where $F$ is uniformly elliptic (see (F\ref{ellipticity}) below) and Lipschitz-continuous in space with linear growth in the Hessian (see (F\ref{F lip}) below). As a consequence, we find a rate of convergence for monotone and consistent finite difference approximations to (\ref{eqn for u}). Both results generalize the work of Caffarelli and Souganidis in \cite{Approx schemes, Rates Homogen}, who consider either homogeneous equations or equations with separated dependence on the space variable and on the Hessian.

The nonlinearity $F$ is a continuous function on $\mathcal{S}_n \times U$, where $\mathcal{S}_n$ is the set of $n\times n$ real symmetric matrices endowed with the usual order and norm (for $X\in \mathcal{S}_n$,  $||X|| = \sup_{|v|=1}|Xv|$). We make the following assumptions:
\begin{enumerate}[({F}1)]
\item \label{ellipticity} $F$ is uniformly elliptic, which means there exist  constants $0<\lambda \leq \Lambda$ such that for all $x\in U$, any $X\in \mathcal{S}_n$, and for all $Y\geq 0$,
\[
\lambda||Y|| \leq F(X+Y, x)-F(X,x) \leq \Lambda||Y||;
\]
and,
\item
\label{F lip} 
there exists a positive constant $\kappa$ such that for all $x,y\in U$ and all $X\in \mathcal{S}_n$,
\[
|F(X,x)-F(X,y)|\leq \kappa|x-y|(||X||+1).
\]
\end{enumerate}

An example of  an equation satisfying our assumptions is the Isaacs equation 
\[
F(D^2u, x)=\sup_{\alpha}\inf_{\beta }L^{\alpha, \beta}u(x),
\] 
where, for each $\alpha$ and $\beta$ in some index sets, the operator $L^{\alpha,\beta}$ is given by
\[
L^{\alpha, \beta}u(x)= \sum_{i,j=1}^n a_{ij}^{\alpha, \beta}(x)\partial^2_{i,j}u(x)+f^{\alpha,\beta}(x),
\]
and is uniformly elliptic with uniformly Lipschitz coefficients, which means there exists a $\kappa$ such that for all $x,y\in U$ and for all $\alpha$, $\beta$, $i$ and $j$,
\[
|a_{ij}^{\alpha, \beta}(x)-a_{ij}^{\alpha, \beta}(y)|\leq \kappa|x-y|
\] 
and 
\[
|f^{\alpha,\beta}(x)-f^{\alpha,\beta}(y)|\leq \kappa |x-y|.
\]
The Isaacs equation arises in the study of stochastic differential games. We do not give further details about the Isaacs equation and refer the reader to Section 1 of Crandall, Ishii and Lions' \cite{User's guide} for a list of references.

We also assume: 
\begin{enumerate}[({U}1)]
\item
\label{assum U}
$U$ is a bounded subset of $\rr^n$ with regular boundary,
\end{enumerate}
\begin{enumerate}[({G}1)]
\item 
\label{f assumption}$f\in C^{0,1}(U)$, and
\item
\label{g c one alpha}
$g\in C^{1,\gamma}(\bdry U)$ for some $\gamma \in (0,1]$.
\end{enumerate}

The main result is a comparison between solutions and $\delta$-viscosity solutions (briefly, $\delta$-solutions) of (\ref{eqn for u}). The definition of $\delta$-solutions is given in Section \ref{solution}. Next we present a statement of our main result that has been simplified for the introduction; the full statement is in Section \ref{sec pf of main thm}.

\begin{thm}
\label{vague main thm}
Assume  (U\ref{assum U}), (F\ref{ellipticity}), (F\ref{F lip}), (G\ref{f assumption}), (G\ref{g c one alpha}). Let $u$ be a  viscosity solution of (\ref{eqn for u}) and assume that $\left\{v_\delta\right\}_{\delta\geq 0}$ is a family of H\"older continuous $\delta$-solutions of (\ref{eqn for u}) that satisfy, for all $\delta>0$,
 \[
 v_\delta=u \text{ on }\bdry U.
 \]
There exist positive constants $\bar{\delta}$, $\bar{\alpha}$ and $\bar{c}$ such that,  for any $\delta\leq \bar{\delta}$,
\[
\sup_U |u-v_\delta|\leq \bar{c}\delta^{\bar{\alpha}}.
\]
\end{thm}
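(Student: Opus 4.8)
The plan is to follow the scheme of Caffarelli and Souganidis: regularize $u$ and $v_\delta$ by sup- and inf-convolution, compare the regularizations through a measure-theoretic (Alexandrov--Bakelman--Pucci type) estimate, and optimize the regularization scale as a function of $\delta$. The new feature, which is where the work concentrates, is that the Lipschitz condition (F\ref{F lip}) couples the space and Hessian variables through the factor $\|X\|+1$, so the error paid for regularizing will itself grow with the size of the Hessian, and this has to be controlled.

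\emph{Set-up and reduction to an interior estimate.} The interior and boundary regularity theory for (\ref{eqn for u}), using (U\ref{assum U}), (F\ref{ellipticity}), (F\ref{F lip}), (G\ref{f assumption}), (G\ref{g c one alpha}), gives $u\in C^{1,\gamma'}(\bar U)$ for some $\gamma'>0$; in particular $u$ is Lipschitz on $\bar U$, so, since $u=v_\delta$ on $\bdry U$ and $\|v_\delta\|_{C^{0,\eta}(\bar U)}\le M$, one has $|u-v_\delta|\le C\,\mathrm{dist}(\cdot,\bdry U)^{\eta}$ near $\bdry U$. Fix $\tau=\tau(\delta)$, a small power of $\delta$ with $\tau\ge c\,\delta^{2-\eta}$, and let $u^\tau$ be the sup-convolution of $u$ and $v_{\delta,\tau}$ the inf-convolution of $v_\delta$, at scale $\tau$. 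Then $u^\tau$ is semiconvex and $v_{\delta,\tau}$ semiconcave with constant $1/\tau$, $u\le u^\tau$, $v_{\delta,\tau}\le v_\delta$ (so $u-v_\delta\le u^\tau-v_{\delta,\tau}=:w$ pointwise), $\|u-u^\tau\|_{L^\infty(U)}\le C\tau$, and $\|v_\delta-v_{\delta,\tau}\|_{L^\infty(U)}\le C\tau^{\eta/(2-\eta)}$. The transfer-of-equation computation, using (F\ref{F lip}) and (G\ref{f assumption}) to control the displacement between a point and the point realising the convolution, shows that on $U_\tau:=\{x\in U:\mathrm{dist}(x,\bdry U)>C\tau^{1/(2-\eta)}\}$, $u^\tau$ is a viscosity subsolution of $F(D^2u^\tau,x)\ge f(x)-C\tau(\|D^2u^\tau\|+1)$, and — using the definition of $\delta$-solution from Section~\ref{prelim}, which is exactly what makes $v_{\delta,\tau}$ an honest supersolution once $\tau\ge c\,\delta^{2-\eta}$ — that $v_{\delta,\tau}$ is a viscosity supersolution of $F(D^2v_{\delta,\tau},x)\le f(x)+C\tau^{1/(2-\eta)}(\|D^2v_{\delta,\tau}\|+1)+C\delta$; by semiconvexity/semiconcavity these hold pointwise almost everywhere. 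On $\bdry U_\tau$ one has $w\le C\tau^{\eta/(2-\eta)}$ by the near-boundary Hölder bound, and if $\sup_U(u-v_\delta)$ is not attained in $U_\tau$ it is $\le C\tau^{\eta/(2-\eta)}$ directly; so in all cases it suffices to bound $w$ on $U_\tau$, where it solves perturbed equations and has small boundary data. (The reverse inequality is obtained by interchanging the roles of the sup- and inf-convolutions.)

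\emph{The comparison and the choice of parameters.} Absorbing the $\|X\|$-terms into the ellipticity constants shows that, for $\tau$ small, $u^\tau$ and $v_{\delta,\tau}$ are respectively a subsolution and a supersolution of the Pucci extremal equations with uniform (slightly perturbed) ellipticity and bounded right-hand side; hence Caffarelli's $W^{2,\epsilon_0}$ estimate gives $\|D^2u^\tau\|_{L^{\epsilon_0}(U_{2\tau})}+\|D^2v_{\delta,\tau}\|_{L^{\epsilon_0}(U_{2\tau})}\le C$, with $\epsilon_0=\epsilon_0(n,\lambda,\Lambda)>0$ and $C$ independent of $\tau,\delta$. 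Subtracting the two perturbed equations and using uniform ellipticity, $w$ is a.e.\ a subsolution of $\mathcal M^{+}_{\lambda,\Lambda}(D^2w)\ge -h$ with $h:=C\tau^{1/(2-\eta)}(\|D^2u^\tau\|+\|D^2v_{\delta,\tau}\|+1)+C\delta$. On the upper contact set $\mathcal C$ of $w$ in $U_\tau$ we have $D^2w\le 0$, hence $-\tfrac1\tau I\le D^2u^\tau\le D^2v_{\delta,\tau}\le\tfrac1\tau I$ there — so $\|D^2u^\tau\|,\|D^2v_{\delta,\tau}\|\le 1/\tau$ on $\mathcal C$ — and $\det(-D^2w)\le(\tr(-D^2w)/n)^n\le(h/(n\lambda))^n$. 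The Alexandrov--Bakelman--Pucci estimate (equivalently, the area formula for the concave envelope) then yields
\[
c\,\Bigl(\sup_U(u-v_\delta)\Bigr)^{n}\ \le\ C\,\tau^{\frac{\eta}{2-\eta}n}\ +\ C\!\int_{\mathcal C}h^{\,n}.
\]
The integral is controlled by splitting $\mathcal C$ at a level $t\ge 1$: on $\{\|D^2u^\tau\|+\|D^2v_{\delta,\tau}\|\le t\}$ one has $h\le C\tau^{1/(2-\eta)}t+C\delta$, while on the complementary ``bad'' set the $W^{2,\epsilon_0}$-estimate gives measure $\le Ct^{-\epsilon_0}$ and the crude bound gives $h\le C\tau^{1/(2-\eta)-1}+C\delta$. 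Balancing these contributions against the boundary error, by taking $t$ an appropriate power of $1/\tau$ and then $\tau$ an appropriate power of $\delta$ (subject to $\tau\ge c\,\delta^{2-\eta}$), gives $\sup_U(u-v_\delta)\le\bar c\,\delta^{\bar\alpha}$ with $\bar\alpha=\bar\alpha(n,\lambda,\Lambda,\eta)>0$; this balancing — together with the choice of $\bar\delta$ small enough that $\tau(\delta)$ stays below the thresholds used in the $W^{2,\epsilon_0}$ and perturbed-ellipticity steps — is the heart of the argument and the main obstacle. It is exactly here that (F\ref{F lip}) is felt: in the inhomogeneous and separated settings of \cite{Approx schemes} and \cite{Rates Homogen} the corresponding error carries no Hessian growth and the estimate closes at once, whereas here the term $\tau^{1/(2-\eta)-1}$ coming from the bad set must be defeated using the decay of its measure, which is what forces the power-type rate; when $\eta$ is too small for the one-shot estimate one iterates it over dyadic scales, exploiting that the nonlinearity rescaled to scale $r$ has Lipschitz constant only $\kappa r$. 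Combining this with the symmetric estimate for $v_\delta-u$, and tracking the dependence of all constants on $n,\lambda,\Lambda,\kappa,M,U,f,g,\eta$, completes the proof.
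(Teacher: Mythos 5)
Your overall scaffolding (sup-/inf-convolution, ABP on the upper contact set of the difference, parameter balancing in $\delta$) is the right family of ideas, but there is a genuine gap at the single point where the notion of $\delta$-solution actually has to be used: the claim that, once $\tau\geq c\,\delta^{2-\eta}$, the inf-convolution $v_{\delta,\tau}$ is an ``honest'' viscosity supersolution of a perturbed equation (and hence satisfies the inequality pointwise a.e.\ by semiconcavity). Inf-convolution transfers a test paraboloid touching $v_{\delta,\tau}$ from below at $x$ to a paraboloid touching $v_\delta$ from below at the point $x^*$ realizing the infimum, but the touching is preserved only on a neighborhood of the \emph{same size} as the original one, not on a ball of radius comparable to the displacement $|x-x^*|$; so the condition $\tau\gtrsim\delta^{2-\eta}$ buys nothing, and the convolution remains only a $\delta$-supersolution of the perturbed equation (this is exactly the content of the proposition in Appendix B). Likewise, Alexandrov a.e.\ twice differentiability gives a second-order expansion with error $o(|x-y|^2)$ on an uncontrolled neighborhood, which cannot be upgraded to a paraboloid staying below $v_\delta$ on a full ball of radius $\delta$; and the same defect blocks your application of the $W^{2,\epsilon_0}$ estimate to $v_{\delta,\tau}$, since that estimate is proved by touching with paraboloids at arbitrarily small scales, which a $\delta$-supersolution does not see. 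So the differential inequality for $h$ on the contact set, which your ABP step requires, is never established.

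What the paper does at this juncture is precisely designed to manufacture admissible test paraboloids for the $\delta$-supersolution: it freezes coefficients on balls $B_r(x_0)$ with $r=\delta^\theta$ and controls the error (Proposition \ref{compare to constant coef}, Corollary \ref{cor compare}), so that the Caffarelli--Souganidis regularity result for $x$-independent equations (Proposition \ref{thm A}) yields, on a large ``good set'', second-order expansions of the sup-convolved frozen solution with a \emph{quantified cubic} error $Ct\rho^{-1}|x-y|^3$; combining a contact point of the ABP envelope with a good-set point (the covering argument of Lemma \ref{helper lemma}), and subtracting the extra quadratic $\delta^{1/4}|x-y|^2$, one obtains a paraboloid that genuinely stays below $v^-$ on a ball of radius $\delta$, and only then is the $\delta$-supersolution property invoked (Lemma \ref{main lem}). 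The perturbed operators $F_\ep$, $F^\ep$ with $\ep\gg r$ are introduced to absorb the displacement of the convolution points relative to the frozen center. Your proposal skips the frozen-coefficient approximation and the good-set expansion entirely, but without them the passage from local (or a.e.) second-order information to touching on $\delta$-balls --- the crux of the theorem --- is missing, so the argument as written does not close.
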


The notion of $\delta$-solutions was introduced in \cite{Rates Homogen} for fully nonlinear uniformly elliptic equations of the form $F(D^2u)=0$. An error estimate between solutions and $\delta$-solutions of such equations was established in \cite{Rates Homogen}.  A key step in the proof of the error estimate was a regularity result (\cite[Theorem A]{Approx schemes}), which says that, outside of sets of small measure, solutions of $F(D^2u)=f(x)$ have second-order expansions with controlled error.
The proof of this regularity theorem relies on the equation being homogeneous -- differentiating  $F(D^2u)=f(x)$ implies that the derivatives $u_{x_i}$  solve the linear uniformly elliptic equation $\tr (DF \cdot D^2u_{x_i})=f_{x_i}$; therefore, a known estimate that gives first-order expansions on large sets (see Chapter 7 of Caffarelli and Cabre \cite{Cabre Caffarelli book}) applies to $u_{x_i}$, and from this the estimates on $u$ are deduced.  

The main challenge in the $x$-dependent case is that  this extra regularity result is not known. Because differentiating the $x$-dependent equation $F(D^2u,x)=f(x)$ does not imply anything useful about the derivatives of $u$, we cannot hope to replicate the proof of \cite[Theorem A]{Approx schemes} for non-homogeneous equations. 

Let us briefly describe the strategy of proof of Theorem \ref{vague main thm}. We perturb the nonlinearity $F$ and then ``localize" the equation (see Propositions \ref{compare u and uep} and \ref{compare to constant coef}). This allows us obtain an approximation of $u$ that is regular enough to compare to the $\delta$-solution $v_\delta$.  We hope that this method, in particular the use of the perturbations of $F$, may be of interest in other contexts. We include a detailed outline in Section \ref{subsec idea}. 

\begin{rem}
\label{remark}
The assumption (F\ref{F lip}) on the nonlinearity $F$ may be weakened. In fact, there exists a universal constant $\alpha$ such that if $\beta >1-\alpha$, our results hold for any $F$ that satisfies
\[
|F(X,x)-F(X,y)|\leq \kappa(1+||X||)|x-y|^\beta.
\]
This requirement on $\beta$ comes from the proof of Proposition \ref{compare to constant coef}, and $\alpha$ is the exponent from Proposition \ref{c one alpha prop}. For simplicity, we will only work with the case $\beta = 1$ (in other words, we assume that $F$ satisfies (F\ref{F lip})).
\end{rem}
\begin{rem}
We often assume
\begin{equation}
\label{last prop of F}
F(0,x) =0\text{ for all }x\in U.
\end{equation}
This is not a restrictive assumption:  the equation $F(D^2u,x)=f(x)$ is equivalent to $F(D^2u , x)-F(0,x)=f(x)-F(0,x)$, and the nonlinearity $\tilde{F}(M,x)=F(M,x)-F(0,x)$ satisfies (\ref{last prop of F}). 
\end{rem}

We also study finite difference approximations to (\ref{eqn for u}).  We write the finite difference approximations as 
\begin{equation}\left\{
\begin{array}{l l}
\label{eqn for uhh}
F_h[v_h](x)=0 &\quad \text{ in }U_h, \\
v_h=g &\quad \text{ on }\bdry U_h,
\end{array}\right.
\end{equation}
where $U_h = U\cap h\zz^n$ is the mesh of discretization and $F_h$ is the finite difference operator. We assume:
\begin{enumerate}[({$F_h$}1)] 
\item \label{monotone} if $v_h^1$ and $v_h^2$ satisfy $F_h[v_h](x)=0$ in $U_h$ and $v_h^1\leq v_h^2$ on $\bdry U_h$, then $v_h^1\leq v_h^2$ on $U_h$; and
\item
\label{consistent} 
 there exists a positive constant $K$ such that for all $\phi\in C^3(U)$ ,
\[
|F_h[\phi](x) - (F(D^2 \phi, x)-f(x))|\leq K(1+\linfty{D^3\phi}{U})h \text{ in }U.
\]
\end{enumerate} 
Schemes that satisfy ($F_h$\ref{monotone}) and ($F_h$\ref{consistent}) are said to be, respectively, \emph{monotone} and \emph{consistent with an error estimate for $F$}.

We have simplified our notation here in order to state our main result; all the details about approximation schemes and the precise statement of Theorem \ref{vague main thm h}  are given in Sections \ref{sec h} and \ref{sec proof of theorem h}. We prove:
\begin{thm}
\label{vague main thm h}
Assume (U\ref{assum U}), (F\ref{ellipticity}), (F\ref{F lip}), (G\ref{f assumption}), and suppose $g\in C^{1,\gamma}(U)$. 
Assume that $F_h$ is a monotone scheme that is consistent with an error estimate for $F$. Assume that $u$ is the viscosity solution of (\ref{eqn for u}) and that $v_h$ satisfies (\ref{eqn for uhh}). There exist positive constants $\bar{c}$, $\bar{\alpha}$ and $\bar{h}$ such that for all $h\leq \bar{h}$,
\[
\sup_{U_h} |u-v_h|\leq \bar{c}h^{\bar{\alpha}}.
\]
\end{thm}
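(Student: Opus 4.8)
The plan is to reduce Theorem \ref{vague main thm h} to Theorem \ref{vague main thm} by showing that a solution $v_h$ of the finite difference scheme (\ref{eqn for uhh}), suitably interpreted, is a $\delta$-solution of (\ref{eqn for u}) with $\delta$ comparable to a power of $h$, and that it enjoys a uniform $C^{0,\eta}$ bound. This is precisely the strategy of Caffarelli and Souganidis in \cite{Approx schemes}: the notion of $\delta$-solution was designed to be the bridge between discrete schemes and the continuous equation. Once $v_h$ is recognized as a $\delta(h)$-solution with $\delta(h) = C h^\theta$ for some $\theta > 0$, applying Theorem \ref{vague main thm} gives $\sup_U |u - v_h| \leq \bar c\, \delta(h)^{\bar\alpha} = \bar c\, (Ch^\theta)^{\bar\alpha}$, which is the desired bound with a relabeled exponent and constant.

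**The steps, in order**, would be as follows. First, extend the grid function $v_h$ (defined on $U_h = U \cap h\zz^n$) to a function $\tilde v_h$ on all of $U$, for instance by the standard construction that takes the value of $v_h$ at grid points and interpolates appropriately; the monotonicity ($F_h$\ref{monotone}) together with barrier arguments near $\bdry U$ (using the regularity of $\bdry U$ from (U\ref{assum U}) and $g \in C^{1,\gamma}$ from (G\ref{g c one alpha})) should give a uniform $L^\infty$ bound and then a uniform $C^{0,\eta}$ estimate, where $\eta$ depends only on the ellipticity constants $\lambda, \Lambda$ and $n$; this is the discrete analogue of the Krylov--Safonov / Caffarelli--Cabré Hölder estimate and is available in the literature for monotone consistent schemes. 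Second, I would verify that $\tilde v_h$ satisfies the defining inequalities of a $\delta$-solution: for any smooth test function $\phi$ touching $\tilde v_h$ from above (resp. below) at an interior point, the consistency estimate ($F_h$\ref{consistent}) controls $F_h[\phi]$ in terms of $F(D^2\phi, x) - f(x)$ up to an error $K(1 + \linfty{D^3\phi}{U})h$; combining this with the scheme equation $F_h[v_h] = 0$ and a mollification/sup-convolution argument to pass from the discrete stencil to pointwise touching, one obtains that $\tilde v_h$ solves (\ref{eqn for u}) in the viscosity sense up to an error measured by $h$ against the $C^3$-norm of the test function — which is exactly the content of being a $\delta$-solution with $\delta = \delta(h)$ a suitable power of $h$. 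The precise power $\theta$ comes from balancing $h$ against the scale at which the test functions are resolved, and will be extracted from the definition of $\delta$-solution in Section \ref{prelim}. Third, apply Theorem \ref{vague main thm} with $v_\delta := \tilde v_h$ and $\delta := \delta(h)$, noting that the hypotheses $\etanorm{v_\delta}{U} \leq M$ (from Step 1) and $v_\delta = u$ on $\bdry U$ (arranging the boundary data to match, using $v_h = g$ on $\bdry U_h$ and $u = g$ on $\bdry U$, up to a boundary-layer error that is itself $O(h^{\text{power}})$ and can be absorbed) are met; this yields $\sup_U |u - \tilde v_h| \leq \bar c\, \delta(h)^{\bar\alpha}$, and restricting to $U_h$ gives the claim.

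**The main obstacle** I expect is Step 1 — establishing the uniform $C^{0,\eta}$ bound for $\tilde v_h$ that is independent of $h$. While the continuous Krylov--Safonov estimate is classical, its discrete counterpart for general monotone consistent schemes requires care: one needs a discrete Alexandrov--Bakelman--Pucci estimate and a discrete measure-theoretic barrier argument, and the interplay between the mesh size $h$ and the Hölder scale must be handled so that the constant $M$ does not blow up as $h \to 0$. A secondary technical point is the boundary analysis: matching $v_h = g$ on $\bdry U_h$ with $u = g$ on $\bdry U$ requires constructing discrete barriers adapted to the regular boundary of $U$ and the $C^{1,\gamma}$ regularity of $g$, and controlling the resulting boundary-layer discrepancy by a power of $h$. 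Both of these are, however, essentially known techniques from \cite{Approx schemes} and \cite{Cabre Caffarelli book}, adapted to the present $x$-dependent setting (where the $x$-dependence is harmless at this stage, since it only enters through $F$ and $f$, both of which are Lipschitz and hence cause no trouble in the Hölder or barrier estimates). Once these two ingredients are in place, the reduction to Theorem \ref{vague main thm} is formal.
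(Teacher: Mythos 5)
Your overall plan --- regularize $v_h$, recognize it as a $\delta$-solution with $\delta$ a power of $h$, and quote Theorem \ref{vague main thm} --- matches the paper's informal description of its strategy, but the pivotal Step 2 does not hold as you state it, and this is exactly where the paper has to do real work rather than apply Theorem \ref{vague main thm} as a black box. First, being a $\delta$-solution is not ``solving (\ref{eqn for u}) up to an error measured against the $C^3$-norm of the test function''; the definition requires the \emph{exact} inequalities $F(D^2P,x)\geq 0$ (resp.\ $\leq 0$) for every paraboloid touching on a ball of radius $\delta$. The consistency hypothesis only yields $F(D^2P,x^*)-f(x^*)\geq -Kh$ at the relevant mesh point, so no regularization of $v_h$ is a $\delta$-solution of (\ref{eqn for u}) itself. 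Second, and more seriously, to pass from a paraboloid touching an extension of $v_h$ at $x$ to a discrete inequality, one must use the sup/inf-convolution of the mesh function (your interpolation idea fails here: a paraboloid touching an interpolant need not touch $v_h$ at a mesh point, and the resulting error is proportional to the opening of the paraboloid, which is not controlled). The convolution displaces the evaluation point to some $x^*$ with $|x-x^*|\leq \nu = 4\theta^{1/2}\linfty{v_h}{U_h}^{1/2}+\sqrt{n}h$, and the convolution parameter cannot be taken of order $h$: the ABP/contact-set argument in Lemma \ref{main lem} forces $\theta\sim\delta^{\zeta}$ (to get $D^2v\leq \delta^{-\zeta}I$), so $\nu\sim\delta^{\zeta/2}\gg h$. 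What one actually obtains is Proposition \ref{v_h is delta soln}: $v_h^{\theta,\pm}$ are $\delta$-sub/supersolutions of the \emph{perturbed} equations $F^{\nu}(D^2v,x)=f_\nu(x)-Kh$ and $F_\nu(D^2v,x)=f^{\nu}(x)+Kh$, with $\delta=Nh$ --- not of (\ref{eqn for u}). Since the hypotheses of Theorem \ref{vague main thm} require $\delta$-solutions of (\ref{eqn for u}) itself, your final step cannot be invoked.

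Because of this, the paper does not deduce Theorem \ref{vague main thm h} from Theorem \ref{vague main thm} formally; it reruns the proof: it introduces $u_\ep$ via Proposition \ref{compare u and uep}, freezes coefficients via Corollary \ref{cor compare}, and applies Lemma \ref{main lem}, which is deliberately stated for the $\nu$-perturbed operators $F_\nu$, $F^{\nu}$ precisely so that the conclusion of Proposition \ref{v_h is delta soln} can be fed into it; the extra $Kh$ from consistency is absorbed by adding the explicit quadratic $\frac{Kh}{2\lambda}\left(|x-x_0|^2-\frac{r^2}{4}\right)$ to $v^-$ and using uniform ellipticity. To repair your argument you would either have to reproduce this rerun, or first prove a strengthened version of Theorem \ref{vague main thm} valid for families that are $\delta$-sub/supersolutions of the $\nu$-perturbed equations with $\nu$ a power of $\delta$ and with an $O(h)$ defect in the right-hand side. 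A final minor point: the discrete $C^{0,\eta}$ bound you flag as the main obstacle is not one --- it is exactly Theorem \ref{propKT}, quoted from Kuo and Trudinger, so the genuinely delicate issues are the ones above, not the H\"older estimate.
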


The convergence of monotone and consistent approximations of fully nonlinear second order PDE was first established by Barles and Souganidis \cite{BarlesSouganidis}. Kuo and Trudinger \cite{KT estimates, KT discrete methods} later studied the existence of monotone and consistent approximations for nonlinear equations and the regularity of the approximate solutions $u_h$. They showed that if $F$ is uniformly elliptic, then there exists a monotone finite difference scheme $F_h$ that is consistent with $F$, and that the approximate solutions $v_h$ are in $C^{0,\eta}$. However, obtaining an error estimate remained an open problem. 

The first error estimates for approximation schemes were established by Krylov for equations that are either convex or concave, but possibly degenerate  \cite{Krylov 1998, Krylov 1999}. Krylov used stochastic control methods that apply in the convex  or concave case, but not in the general setting.  Barles and Jakobsen in \cite{BarlesJakobsen2002, BarlesJakobsen2005} improved Krylov's error estimates for  convex or concave equations. In \cite{Krylov2005} Krylov improved the error estimate to be of order $h^{1/2}$, but still in the convex/concave case. In addition,  Jakobsen \cite{Jakobsen2004, Jakobsen2006} and Bonnans, Maroso, and Zidani \cite{Bonnans} established error estimates for special equations or for special dimensions. The first error estimate for general nonlinear equations that are neither convex nor concave was obtained by Caffarelli and Souganidis in \cite{Approx schemes}.  Their result holds for equations $F$ that do not depend on $x$.

To our knowledge, Theorem \ref{vague main thm h} is  the first  error estimate for general nonlinear uniformly elliptic equations that are neither convex nor concave and are not homogeneous. In particular, this is the first error estimate for approximations of the Isaacs equation. To prove Theorem \ref{vague main thm h}, we show that an appropriate regularization of the solution of (\ref{eqn for uhh}) is a $\delta$-solution of (\ref{eqn for u}), where $\delta$ depends on $h$ (see Proposition \ref{v_h is delta soln}). This allows us to essentially deduce Theorem \ref{vague main thm h} from our estimate in Theorem \ref{vague main thm}.

Our paper is structured as follows. In Section \ref{prelim} we establish notation, give the definition of $\delta$-solutions and state several known results about the regularity of viscosity solutions of (\ref{eqn for u}). We provide a detailed outline of the proofs of our main results in Section \ref{subsec idea}.   Section \ref{comparison sec}  is devoted to establishing an estimate between the solution $u$ of (\ref{eqn for u}) and solutions of the equation with ``frozen coefficients" on small balls. This is Proposition \ref{compare to constant coef}. In Section \ref{section ep} we study  perturbations of the equation (\ref{eqn for u}) and prove an estimate between  $u$ and solutions of the perturbed equations (Proposition \ref{compare u and uep}). In Section \ref{elem lemm sec} we establish an elementary lemma that plays an important role in the proofs of Theorems \ref{vague main thm} and \ref{vague main thm h}. Section \ref{main lem sec} is devoted to the statement and proof of  an important estimate between $\delta$-solutions of (\ref{eqn for u}) and solutions of equations with frozen coefficients. This is Proposition \ref{main lem}.  The full statement and the proof of our main result, Theorem \ref{vague main thm}, is in Section \ref{sec pf of main thm}. Section \ref{sec h} is devoted to introducing the necessary notation and stating known results about approximation schemes. In Proposition \ref{v_h is delta soln} we show that certain regularizations of the approximate solutions $v_h$ are $\delta$-solutions of (\ref{eqn for u}). In Section \ref{sec proof of theorem h} we give the precise statement and proof of Theorem \ref{vague main thm h}.  In  Appendix  A we state several known results related to the comparison principle for viscosity solutions. In Appendix B we summarize the properties of inf- and sup- convolutions that we use in our paper.

\tableofcontents

\section{Preliminaries and outline}
\label{prelim}
In this section we establish notation, give the definition of $\delta$-solutions, recall some known results, and provide an outline of our argument. 

\subsection{Notation}
\label{notation}
We denote open balls in $\rr^n$ by 
\[
B_r(x_0)=B(x_0,r)=\{x\in \rr^n: \   \   |x-x_0|<r\},
\]
and we often write $B_r$ to mean $B_r(0)$. We denote the diameter of $U\subset \rr^n$ by $\diam U$. A \emph{paraboloid} $P(x)$ is a polynomial in $x_1,...,x_n$ of degree 2. We say that a paraboloid $P$ is of \emph{opening} $M$ if 
\[
P(x)=l(x)+ M\frac{|x|^2}{2},
\]
where $l$ is an affine function and $M$ is a constant.

Throughout the paper we say a constant is \emph{universal} if it is positive and depends only on $n$, $\lambda$, and $\Lambda$. 

\subsection{Notions of solution}
\label{solution}
We consider solutions of (\ref{eqn for u}) in the viscosity sense; see 
\cite{User's guide} for an introduction to the theory of viscosity solutions. 
Throughout, we say ``solution" to mean ``viscosity solution."
\begin{defn}
We say that $v$ is a \emph{$\delta$-subsolution} (respectively, $\delta$-\emph{supersolution}) of (\ref{eqn for u}) if, for any $x$ such that $B_{\delta}(x)\subset U$, any paraboloid $P$ with $P(x)=v(x)$ and $P(y)\geq v(y)$  (respectively, $P(y)\leq v(y)$) for all $y$ in $B_{\delta}(x)$ satisfies
\[
F(D^2P, x)\geq 0 \text{ (respectively, $F(D^2P, x)\leq 0 $)}.
\]
We say that $v$ is a $\delta$-\emph{solution} if $v$ is both a $\delta$-subsolution and a $\delta$-supersolution.
\end{defn}
From the definition, it is clear that a viscosity solution of (\ref{eqn for u}) is a $\delta$-solution of (\ref{eqn for u}) for any $\delta>0$. The difference from the definition of viscosity solution is that for $u\in C(U)$ to be a $\delta$-supersolution (resp. $\delta$-subsolution), any test paraboloid must stay below (resp. above) $u$ on a set of fixed size.

\subsection{Known results}
\label{know}
 We recall that the \emph{concave envelope} of a function  $u\in C(B_r)$ is defined as 
\[
\Gamma_u(x)=\inf\{ l(x): \  l\geq u \text{ in $B_r$ and $l$ is affine}\}.
\]
In addition, we will use the following terminology:
\begin{defn}
For $u\in C(U)$, we say $D^2u(x)\geq MI$ (resp. $D^2u\leq MI$) \emph{in the sense of distributions} if there exists a paraboloid $P$ of opening $M$ such that $u(x)=P(x)$ and, for all $y\in B_r(x)$ for some $r$, 
\[
u(y)\geq P(y) \text{ (resp. }u(y)\leq P(y)).
\]
\end{defn}
The following fact is a key step in the proof of the well-known Alexander-Bakelman-Pucci (ABP) estimate, and it will play a central role in our arguments. It is Lemma 3.5 of the book of Caffarelli and Cabre \cite{Cabre Caffarelli book}, modified slightly for our setting. 
\begin{prop}
Assume $u\in C(B_r)$ is such that $u\leq 0$ on $\bdry B_r$.  
Assume that there exists a constant $K$ such that $D^2 u(x)\geq K$ in the sense of distributions for all $x\in B_r$. There exists a universal constant $C$ such that 
\[
\sup_{B_r} u \leq Cr\left(\int_{\{u=\Gamma_u\}}|\det D^2\Gamma_u|\right)^{\frac{1}{n}},
\]
where $\Gamma_u$ is the concave envelope of $u^+$ in $B_{2r}$. Moreover, $\Gamma_u$ is twice differentiable almost everywhere.
\label{pre ABP}
\end{prop}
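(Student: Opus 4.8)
The plan is to run the classical contact-set proof of the Alexandrov--Bakelman--Pucci estimate, exactly as in the proof of Lemma~3.5 of \cite{Cabre Caffarelli book}, while keeping track of the role of the hypothesis $D^2u\geq K$.

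\emph{Set-up.} We may assume $M_0:=\sup_{B_r}u>0$, since otherwise there is nothing to prove. Because $u\leq 0$ on $\bdry B_r$, the function $u^+$ extends continuously by $0$ to all of $B_{2r}$, and its concave envelope $\Gamma_u$ in $B_{2r}$ is concave, nonnegative and locally Lipschitz there; Alexandrov's theorem then yields that $\Gamma_u$ is twice differentiable almost everywhere, which is the last assertion. Also, since $u^+=0$ on $\bdry B_r$ and outside $B_r$, we have $\sup_{B_{2r}}\Gamma_u=\sup_{B_r}u^+=M_0$, and whenever $\Gamma_u(x)=u^+(x)>0$ we must have $x\in B_r$ and $u(x)=\Gamma_u(x)$; thus $\{\Gamma_u=u^+\}\cap\{u^+>0\}=\{u=\Gamma_u\}$.

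\emph{Sliding planes.} Fix $x_0\in B_r$ with $u(x_0)=M_0$. For each $p\in\rr^n$ with $|p|<M_0/(3r)$, set $\ell_p(x)=p\cdot x+b_p$ with $b_p:=\max_{y\in\overline{B_{2r}}}\bigl(u^+(y)-p\cdot y\bigr)$, so that $\ell_p\geq u^+$ on $B_{2r}$ with equality at some $\bar x\in\overline{B_{2r}}$. Since $|\bar x-x_0|<3r$,
\[
\ell_p(\bar x)\ \geq\ p\cdot\bar x+\bigl(u^+(x_0)-p\cdot x_0\bigr)\ =\ M_0+p\cdot(\bar x-x_0)\ \geq\ M_0-3r|p|\ >\ 0,
\]
hence $u^+(\bar x)>0$, so $\bar x\in B_r$ and $u(\bar x)=u^+(\bar x)=\ell_p(\bar x)$. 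As $\Gamma_u\leq\ell_p$ (an infimum of affine functions above $u^+$) and $\Gamma_u\geq u^+$, we get $\Gamma_u(\bar x)=u^+(\bar x)=u(\bar x)$, so $\bar x\in\{u=\Gamma_u\}$ and $p$ lies in the superdifferential of the concave function $\Gamma_u$ at $\bar x$. This is where $D^2u\geq K$ enters: the paraboloid $P$ of opening $K$ touching $u$ from below at $\bar x$ satisfies $P\leq u\leq u^+\leq\Gamma_u$ near $\bar x$ with $P(\bar x)=\Gamma_u(\bar x)$, so $P$ touches $\Gamma_u$ from below there, while concavity gives a supporting affine function from above; comparing first-order terms forces these to have the same gradient, hence the superdifferential at $\bar x$ is the single point $\nabla\Gamma_u(\bar x)$ and $\Gamma_u$ admits a second-order expansion at $\bar x$ with Hessian between $K\,\mathrm{Id}$ and $0$. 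We conclude $B_{M_0/(3r)}(0)\subseteq\nabla\Gamma_u\bigl(\{u=\Gamma_u\}\bigr)$.

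\emph{Measure estimate and conclusion.} Applying the area formula to $x\mapsto\nabla\Gamma_u(x)$ on the contact set --- which is justified because $\Gamma_u$ is twice differentiable a.e., the image of the exceptional set under the superdifferential is null, and at contact points $\nabla\Gamma_u$ is a genuine differential by the previous step --- we obtain, with $|B_1|$ the volume of the unit ball,
\[
|B_1|\Bigl(\frac{M_0}{3r}\Bigr)^{n}\ =\ \bigl|B_{M_0/(3r)}(0)\bigr|\ \leq\ \bigl|\nabla\Gamma_u(\{u=\Gamma_u\})\bigr|\ \leq\ \int_{\{u=\Gamma_u\}}|\det D^2\Gamma_u|.
\]
Rearranging gives $\sup_{B_r}u=M_0\leq C r\bigl(\int_{\{u=\Gamma_u\}}|\det D^2\Gamma_u|\bigr)^{1/n}$ with $C=3|B_1|^{-1/n}$ depending only on $n$, which is the claim.

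\emph{Main obstacle.} The only genuinely delicate point is the second displayed inequality, i.e.\ justifying the change of variables $|\nabla\Gamma_u(E)|\leq\int_E|\det D^2\Gamma_u|$ for $E=\{u=\Gamma_u\}$ when $\Gamma_u$ is known to be twice differentiable only almost everywhere. This is handled by the standard covering/area-formula argument for concave functions, in which the pointwise $C^{1,1}$ control at contact points provided by the hypothesis $D^2u\geq K$ is precisely what rules out any contribution from the singular part of $D^2\Gamma_u$.
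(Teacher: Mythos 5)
The paper itself gives no proof of this proposition: it is imported verbatim (``modified slightly for our setting'') from Lemma 3.5 of Caffarelli--Cabr\'e, so there is no internal argument to compare yours against. What you have written is precisely the classical contact-set proof behind that lemma, and its skeleton is correct: the sliding-plane step showing $B_{M_0/(3r)}(0)\subseteq\nabla\Gamma_u(\{u=\Gamma_u\})$, the use of the touching paraboloid supplied by $D^2u\ge K$ to see that the superdifferential of $\Gamma_u$ at a contact point is a singleton, and the final Jacobian estimate are exactly the right ingredients, and your constant depends only on $n$, as required.

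Two points should be tightened. First, at a contact point $\bar x$ the hypothesis yields only the two-sided bound $\frac{K}{2}|y-\bar x|^2\le \Gamma_u(y)-\Gamma_u(\bar x)-\nabla\Gamma_u(\bar x)\cdot(y-\bar x)\le 0$ on a small ball (lower bound from the paraboloid touching $u\le\Gamma_u$ from below, upper bound from the supporting plane); this is punctual $C^{1,1}$ control, not a genuine second-order expansion, so that phrase is an overstatement --- harmless, since later you only use the two-sided bound. Second, the inequality $|\nabla\Gamma_u(E)|\le\int_E|\det D^2\Gamma_u|$ for $E=\{u=\Gamma_u\}$ is the real content of the cited lemma, and your parenthetical justification is too thin: the assertion that ``the image of the exceptional set under the superdifferential is null'' is false for general concave functions (a cone vertex is a null set whose superdifferential image has positive measure), so it must be derived from the contact-set control rather than invoked. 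The standard way to finish is to use the lower paraboloid at $x_1\in E$ together with the supporting planes at $x_1,x_2\in E$ to get $|\nabla\Gamma_u(x_1)-\nabla\Gamma_u(x_2)|\le 2|K|\,|x_1-x_2|$ whenever $|x_1-x_2|$ is small compared with the radius of the touching ball at $x_1$; since the paper's definition of $D^2u\ge K$ allows that radius to depend on the point, one should first decompose $E$ into pieces on which the radius is bounded below and which have small diameter, so that $\nabla\Gamma_u$ is Lipschitz on each piece, and then apply the Lipschitz change-of-variables inequality together with the fact that the approximate differential of $\nabla\Gamma_u|_E$ coincides a.e.\ with the Alexandrov Hessian $D^2\Gamma_u$; summing over the pieces gives the stated bound. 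With that step written out (or with an explicit citation of the proof of Lemma 3.5 in Caffarelli--Cabr\'e for it), your argument is complete.
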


Next we provide the statements of several known regularity results, for which we introduce the adimensional $C^{1,\alpha}$ norm, denoted by $\conealpha{\cdot}{\bar{B}_r}^*$:
\[
\conealpha{u}{\bar{B}_r}^*=\linfty{u}{B_r}+r\linfty{Du}{B_r}+r^{1+\alpha}\alphasemi{Du}{B_r}.
\]
We  need the following rescaled version of the interior $C^{1,\alpha}$ estimate  \cite[Corollary 5.7]{Cabre Caffarelli book}. 
\begin{prop}[Interior estimate]
\label{interior}
Assume (F\ref{ellipticity}). There exist universal constants $\alpha$ and $C$ such that if $u$ is a viscosity solution of $F(D^2u)=0$ in $B_r$, then $u\in C^{1,\alpha}(\bar{B}_{r/2})$ and
\[
\conealpha{u}{\bar{B}_{r/2}}^* \leq C(\linfty{u}{B_r}+|F(0)|).
\]
\end{prop}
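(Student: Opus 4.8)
The plan is to obtain this by rescaling the standard interior $C^{1,\alpha}$ estimate \cite[Corollary 5.7]{Cabre Caffarelli book}; the point is that the adimensional norm $\conealpha{\cdot}{\bar{B}_r}^*$ is precisely the quantity invariant under the dilation $u\mapsto u(r\,\cdot\,)$, so the passage from $B_1$ to $B_r$ is clean, and the only extra work is to absorb the inhomogeneity $F(0)\neq 0$.

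First I would reduce to a homogeneous equation. By (F\ref{ellipticity}) the map $c\mapsto F(cI)$ is continuous, with $F(cI)\geq F(0)+\lambda c$ for $c\geq 0$ and $F(cI)\leq F(0)-\lambda|c|$ for $c\leq 0$; hence there is $c_0\in\rr$ with $F(c_0 I)=0$, and the same inequalities give $|c_0|\leq \lambda^{-1}|F(0)|$. Put $w(x)=u(x)-\tfrac{c_0}{2}|x|^2$ and $G(M)=F(M+c_0 I)$. Then $G$ is uniformly elliptic with the same constants $\lambda,\Lambda$, $G(0)=F(c_0 I)=0$, and $w$ is a viscosity solution of $G(D^2 w)=0$ in $B_r$. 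Next, set $\tw(y)=w(ry)$ for $y\in B_1$. Since $D^2\tw(y)=r^2(D^2 w)(ry)$, the function $\tw$ solves $H(D^2\tw)=0$ in $B_1$, where $H(M)=r^2 G(r^{-2}M)$; a direct check shows that $H$ is uniformly elliptic with the \emph{same} constants $\lambda,\Lambda$ (the factors $r^{-2}$ and $r^2$ cancel) and $H(0)=r^2 G(0)=0$. Therefore \cite[Corollary 5.7]{Cabre Caffarelli book} applies to $\tw$: there are universal constants $\alpha,C$ with $\tw\in C^{1,\alpha}(\bar{B}_{1/2})$ and $\|\tw\|_{C^{1,\alpha}(\bar{B}_{1/2})}\leq C\linfty{\tw}{B_1}$.

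It remains to translate back. A change of variables gives the scale invariance $\conealpha{w}{\bar{B}_{r/2}}^*=\conealpha{\tw}{\bar{B}_{1/2}}^*$, and since the weights $1,\tfrac12,(\tfrac12)^{1+\alpha}$ appearing in $\conealpha{\tw}{\bar{B}_{1/2}}^*$ are all at most $1$, we get $\conealpha{\tw}{\bar{B}_{1/2}}^*\leq \|\tw\|_{C^{1,\alpha}(\bar{B}_{1/2})}\leq C\linfty{\tw}{B_1}=C\linfty{w}{B_r}$. Finally, $u$ and $w$ differ by the quadratic $\tfrac{c_0}{2}|x|^2$, whose adimensional norm on $B_{r/2}$ and whose sup norm on $B_r$ are both bounded by $Cr^2|c_0|\leq C r^2\lambda^{-1}|F(0)|$; adding this back yields $\conealpha{u}{\bar{B}_{r/2}}^*\leq C\big(\linfty{u}{B_r}+r^2|F(0)|\big)$, which is the claimed bound when $r\leq 1$ (this is the only case used in the paper; in general the argument produces the sharper estimate with $r^2|F(0)|$ in place of $|F(0)|$). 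There is no genuine obstacle here — the argument is a routine rescaling; the only points requiring care are the bookkeeping showing $\conealpha{\cdot}{\bar{B}_r}^*$ is scale invariant and the verification that passing to $H$ keeps the ellipticity constants unchanged, so that $\alpha$ and $C$ from \cite[Corollary 5.7]{Cabre Caffarelli book} remain universal.
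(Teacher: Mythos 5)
Your proof is correct and matches the paper's intended argument: the paper states this proposition without proof, as a ``rescaled version'' of \cite[Corollary 5.7]{Cabre Caffarelli book}, and your dilation argument (with the quadratic subtraction giving $|c_0|\leq\lambda^{-1}|F(0)|$ to track how the inhomogeneity scales) is exactly that routine rescaling, using the scale invariance of $\conealpha{\cdot}{\bar{B}_r}^*$. The only caveat---your estimate naturally produces $r^2|F(0)|$ in place of $|F(0)|$, hence requires $r\leq 1$---is harmless, since the paper only applies the proposition on balls of radius $r\leq r_0\leq 1$.
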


In addition to the interior $C^{1,\alpha}$ interior estimate, we need the following global $C^{1,\alpha}$  estimate (Winter \cite[Theorem 3.1,  Proposition 4.1]{Winter}).

\begin{prop}[Global estimate]
\label{c one alpha prop}
Assume (U\ref{assum U}), (F\ref{ellipticity}), (F\ref{F lip}), (G\ref{f assumption}), (G\ref{g c one alpha}) and $F(0,x)\equiv 0$. There exists a universal constant $\alpha$  and a positive constant $C$ that depends on $n$, $\lambda$, $\Lambda$, $\kappa$, $\diam U$ and the regularity of $\bdry{U}$, such that  if $u$ is the viscosity solution of
\begin{equation*}\left\{
\begin{array}{l l}
F(D^2 u, x)=f(x) &\quad \text{ in }U,\\
u=g &\quad \text{ on }\bdry U,
\end{array}\right.
\end{equation*}
then $u\in C^{1,\alpha}(U)$ and
\[
\conealpha{u}{U} \leq C(\linfty{f}{U}+\conegamma{g}{\bdry U}).
\]
\end{prop}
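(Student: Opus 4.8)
The plan is to obtain the global estimate, as in \cite{Winter}, by combining a scale-invariant interior $C^{1,\alpha}$ estimate with a boundary $C^{1,\alpha}$ estimate and then patching the two. First I would absorb the right-hand side: replacing $F(X,x)$ by $\tilde{F}(X,x):=F(X,x)-f(x)$ reduces matters to the case $f\equiv 0$, at the price of enlarging $\kappa$ by $[f]_{C^{0,1}(U)}$ and of having $x\mapsto\tilde{F}(0,x)=-f(x)$ be a bounded Lipschitz function rather than identically zero; note that (F\ref{ellipticity}) and (F\ref{F lip}) are preserved and $|\tilde{F}(0,x)|\le\linfty{f}{U}$. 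Throughout write $d(x)=\mathrm{dist}(x,\bdry U)$, and let $\alpha$ be the minimum of the universal exponent that appears below and of $\gamma$ (so that $C^{1,\gamma}$ data are at least $C^{1,\alpha}$).

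For the interior estimate I would fix $x_0\in U$, set $R=d(x_0)$, and run the Caffarelli perturbation method on dyadic balls about $x_0$. At scale $r\le R$ one compares $u$ with the solution $\bar{u}_r$ of the frozen homogeneous equation $\tilde{F}(D^2\bar{u}_r,x_0)=0$ in $B_r(x_0)$ with $\bar{u}_r=u$ on $\bdry B_r(x_0)$; by (F\ref{F lip}) the coefficient oscillation $\beta(x,x_0):=\sup_{X}|\tilde{F}(X,x)-\tilde{F}(X,x_0)|/(1+||X||)$ is $\le Cr$ on $B_r(x_0)$, hence small in $L^n$ as $r\to 0$. Estimating $u-\bar{u}_r$ by the ABP inequality, whose core is Proposition \ref{pre ABP}, and $\bar{u}_r$ by the rescaled interior estimate Proposition \ref{interior}, one finds that at each dyadic scale $u$ agrees with an affine function up to a geometrically decaying error; summing the geometric series yields a scale-invariant bound controlling $\conealpha{u}{\bar{B}_{R/2}(x_0)}^*$ by $\linfty{u}{B_R(x_0)}$ and the appropriate scaling of $\linfty{f}{U}$, with $\alpha$ and $C$ universal.

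The substantive part is the boundary estimate. At $x_0\in\bdry U$, assumption (U\ref{assum U}) provides a neighborhood and a change of variables flattening $\bdry U$ there; since this diffeomorphism is at least $C^{1,\gamma}$, the transformed equation retains the structure (F\ref{ellipticity}) and (F\ref{F lip}) up to lower-order terms controlled by the boundary regularity, and $g$ transforms to a function extending to some $G\in C^{1,\gamma}$ on the flat half-ball with $\|G\|_{C^{1,\gamma}}\le C\conegamma{g}{\bdry U}$. A barrier argument using uniform ellipticity together with the interior and exterior ball conditions for the flattened domain first gives the boundary Lipschitz bound $|u(x)-G(x_0)|\le C(\linfty{u}{U}+\conegamma{g}{\bdry U})|x-x_0|$ near $x_0$. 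I would then repeat the approximation scheme along dyadic half-balls at $x_0$, comparing $u$ with the solution of the frozen homogeneous equation that equals $u$ on the spherical part of the boundary and equals the first-order Taylor polynomial of $G$ on the flat part; the boundary $C^{1,\alpha}$ estimate for the frozen homogeneous problem, combined with ABP, bounds the difference, and summing shows that $u$ has a first-order expansion at $x_0$ with error $O(r^{1+\alpha})$ and constant of the asserted dependence. Compactness of $\bdry U$ makes this uniform in $x_0$. The global bound then follows by the standard patching: for $x$ with $d(x)$ small one interpolates between the interior estimate on $B_{d(x)/2}(x)$ and the boundary estimate at the nearest boundary point, while for $x$ with $d(x)$ bounded below the interior estimate applies directly; the $\linfty{u}{U}$ terms are finally absorbed using the ABP bound $\linfty{u}{U}\le C(\linfty{g}{\bdry U}+\linfty{f}{U})$.

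The hard part will be the boundary estimate: carrying the barrier construction and the Caffarelli approximation argument all the way up to a flattened boundary, and verifying that flattening perturbs the equation only by terms that the slightly-better-than-Lipschitz regularity of $\bdry U$ can absorb. A secondary difficulty is the bookkeeping of the scaled estimates needed to see that $\alpha$ is indeed universal after intersecting with $\gamma$, and that $C$ depends only on $n$, $\lambda$, $\Lambda$, $\kappa$, $diam\,U$ and the regularity of $\bdry U$.
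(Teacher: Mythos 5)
The paper does not actually prove this proposition: it is imported as a black box from Winter \cite{Winter} (Theorem 3.1 and Proposition 4.1), so there is no internal argument to compare yours against. Your outline reconstructs the standard proof, which is essentially the route of the cited reference: reduce to $f\equiv 0$ by absorbing $f$ into the nonlinearity, prove a scale-invariant interior estimate by freezing coefficients and comparing with the homogeneous problem (the Lipschitz dependence (F\ref{F lip}) makes the coefficient oscillation small at small scales, and ABP via Proposition \ref{pre ABP} plus Proposition \ref{interior} drives the dyadic iteration), prove a boundary estimate, patch, and absorb $\linfty{u}{U}$ by the ABP/comparison bound. Your capping of the exponent by $\gamma$ is, if anything, more careful than the statement in the paper.

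The one genuine soft spot is the flattening step of the boundary estimate. With a boundary diffeomorphism $\Phi$ that is merely $C^{1,\gamma}$, the transformed equation does not ``retain the structure (F\ref{ellipticity}), (F\ref{F lip}) up to lower-order terms'': writing $u=v\circ\Phi$, the Hessian transforms as $D\Phi^{T}(D^{2}v\circ\Phi)D\Phi$ plus a term in which $Dv$ is contracted against $D^{2}\Phi$, and $D^{2}\Phi$ need not exist for a $C^{1,\gamma}$ map, so the lower-order terms you propose to absorb are not even defined. To make your argument work one needs $\bdry U$ at least $C^{1,1}$ (then $D^{2}\Phi\in L^{\infty}$ and the flattened equation fits a class with an additional $\mu|Dv|$ term, which is exactly the setting in which Winter's boundary estimate is formulated), or else a boundary argument that avoids flattening altogether, e.g.\ comparison with half-ball problems in the spirit of Ma--Wang or Silvestre--Sirakov, which is how one reaches $C^{1,\gamma}$ domains. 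Since the proposition lets the constant depend on ``the regularity of $\bdry U$'' and (U\ref{assum U}) is deliberately vague, this is consistent with the paper, but your sketch overstates what $C^{1,\gamma}$ boundary regularity buys; relatedly, once gradient terms appear after flattening, the comparison and ABP tools you invoke must be extended to that larger class of equations, which is part of what the citation to \cite{Winter} is supplying for the paper.
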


Finally, we state the following lemma that we'll employ throughout the argument. Its proof is an elementary barrier argument, which we include in Appendix \ref{appendix Lemma} for the sake of completeness.
\begin{lem}
\label{barrier}
Assume (F\ref{ellipticity}) and  (U1), and let $c$ be a positive constant. Assume that $u$ is a solution of (\ref{eqn for u}) and $\bar{u}$ is a solution of 
\begin{equation*}
\left\{
\begin{array}{l l}
F(D^2 \bar{u}, x)=f(x)+c &\quad \text{ in } U,\\
u=g &\quad \text{ on }\bdry U.
\end{array}\right.
\end{equation*}
Then for all $x\in U$ we have
\[
\bar{u}(x)\leq u(x) \leq \bar{u}(x) +\frac{c}{2\lambda}(\diam U)^2.
\]
\end{lem}

\subsubsection{Regularization by inf- and sup- convolution}
\label{infsup}
We will use the technique of regularization by inf- and sup- convolution. This is an important tool in the regularity theory for viscosity solutions. We refer the reader to \cite[Section 5]{Cabre Caffarelli book} and  \cite[Section 5]{Rates Homogen}  for a thorough introduction. For our purposes, we recall the relevant definitions:
\begin{defn}
\label{defn conv}
For $u\in C(U)$ and for $\theta>0$, we define   the \emph{sup-convolution} $u^{\theta, +}$ and \emph{inf-convolution} $u^{\theta, -}$  as
\[
u^{\theta, +}(x) = \sup_{y\in U}\left\{u(y)-\frac{|x-y|^2}{2\theta}\right\}, \  u^{\theta, -}(x) = \inf_{y\in U}\left\{u(y)+\frac{|x-y|^2}{2\theta}\right\}.
\]
\end{defn}
\begin{defn}
\label{defn conv set}
Given $\theta>0 $, $\delta>0$ and $u\in C(U)$, we define the subset $U^{\theta}_\delta$ of $U$ by set, 
\[
U^{\theta}_\delta = \{ x\in U \  | \  d(x, \bdry U)> 2 \theta^{1/2} \linfty{u}{U}^{1/2} +\delta\} .
\]
\end{defn}
We summarize the basic properties of inf- and sup- convolutions in Proposition \ref{prop conv} of the appendix. One property is of particular importance  --  taking inf- and sup- convolution preserves the notion of super- and sub- solution, as well as of $\delta$-super and $\delta$-sub solution. We state this precisely in items (\ref{item solves}) and (\ref{item delta solves}) of Proposition \ref{prop conv}.

Theorem A of  \cite{Approx schemes} says that if $w$ satisfies a homogeneous equation on a ball $B_\rho$, then $w$ has second order expansions with controlled error on large parts of a smaller ball. The analogue of this result for inf- and sup- convolutions is Proposition 1.2 of \cite{Approx schemes}, which we state as Proposition \ref{thm A}. This result plays a key role in our argument.

\begin{prop}
\label{thm A}
Assume  (F\ref{ellipticity}), $f\in C^{0,1}(U)$ and fix some $x_0$ and $\hat{x}$ in $U$. Let $w\in C^{0,1}(B_{\rho}(\hat{x}) )$ be a viscosity solution of 
\[
F(D^2w, x_0)=f(x) \text{ in } B_{\rho}(\hat{x}).
\]
We denote $B^\theta_{\rho}(\hat{x}) = B(\hat{x}, \rho-2\theta\linfty{Dw}{B_\rho(\hat{x})})$.

There exist universal constants $\sigma$, $t_0$ and $C$ such that for any $t>t_0$  there exists an open set $A^+_t\subset B^{\theta}_{\rho}(\hat{x})$  (respectively, $A^-_t\subset B^{\theta}_{\rho}(\hat{x})$) such that
\[
|B^{\theta}_{\rho/2}(\hat{x}) \setminus A^{\pm}_t| \leq C\rho^{n-\sigma}(\linfty{Du}{B_\rho(\hat{x})}^\sigma+\linfty{Df}{B_\rho(\hat{x})}^\sigma) t^{-\sigma},
\]
and for all $x\in  A_t^{\pm}\cap B_{\rho/2}(\hat{x})$ there exists a quadratic polynomial $P$ with
\[
F(D^2P, x_0)=0,
\]
and, for all $y\in B^{\theta}_\rho(\hat{x})$,
\begin{align*}
&w^{\theta,+}(y)\geq w^{\theta, +}(x) +P(y) -Cr^{-1}t|y-x|^3\\
&(\text{respectively, }w^{\theta,-}(y)\leq w^{\theta, -}(x) +P(y) +Cr^{-1}t|y-x|^3).
\end{align*}
\end{prop}

\subsection{Outline of the proof of the main results}
\label{subsec idea}
 We outline the proof of the upper bound on $u-v_\delta$ that is asserted by Theorem \ref{vague main thm}; the proof of the lower bound is similar. 
Let us use $m$ to denote $\sup (u-v_\delta)$.  We may assume $m>0$, as otherwise there is nothing to prove. We use an elementary fact about H\"older-continuous functions,  Lemma \ref{elem lem}, to find a point $x_0\in U$ at which  $u-v_\delta$  is touched from above by a concave paraboloid of opening $Cm$. In order to present our ideas most clearly, let us assume for the purposes of the outline that the paraboloid is exactly $(u-v_\delta)(x_0) - Cm|x-x_0|^2$. (See Figure 1.)  Since this paraboloid touches $u-v_\delta$ from above at $x_0$, we find,  for any positive $r$,
\begin{equation*}
\sup_{\bdry B_r(x_0)} (u-v_\delta)\leq (u-v_\delta)(x_0)-Cmr^2.
\end{equation*}
Rearranging the previous line gives a bound on $m$ in terms of how much $u-v_\delta$ changes on the small ball $B_r(x_0)$:
\begin{equation}
\label{outlineone}
Cmr^2 \leq  (u-v_\delta)(x_0)- \sup_{\bdry B_r(x_0)} (u-v_\delta).
\end{equation}

\begin{figure}\caption{Illustration for Subsection \ref{subsec idea}.} \centering \includegraphics[trim = 0cm 0cm 0cm 0cm  , clip, scale=.3]{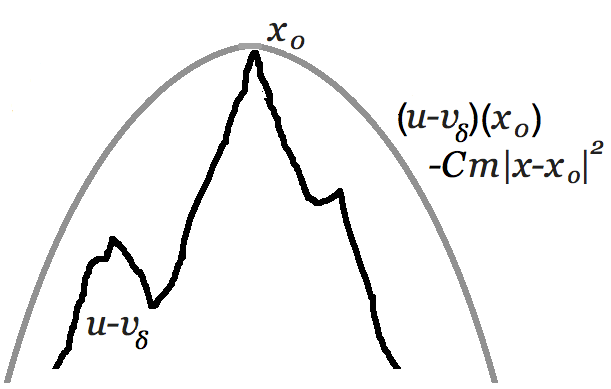}\end{figure}  

Next, we consider the solution of the equation on $B_r(x_0)$, but with ``frozen" coefficients: let $\tu$ be the solution of
\begin{equation*}\left\{
\begin{array}{l l}
F(D^2 \tu, x_0)=f(x_0) &\quad \text{ in }B_r(x_0),\\
\tu=u &\quad \text{ on }\bdry B_r(x_0).
\end{array}\right.
\end{equation*}
 Proposition \ref{compare to constant coef} says $\linfty{u-\tu}{B_r(x_0)}\leq Cr^{2+\alpha}$. Using this to estimate the right-hand side of   (\ref{outlineone}) from above yields,
\begin{equation}
\label{outline2}
Cmr^2 \leq  (\tu-v_\delta)(x_0) -\sup_{\bdry B_r(x_0)}  (\tu -v_\delta ) +Cr^{2+\alpha}.
\end{equation}
Since $\tu$ is the solution to a homogeneous equation, the regularity result Proposition \ref{thm A},  implies that $\tu$ has second order expansions with controlled error on large portions of $B_r(x_0)$. As in \cite{Approx schemes}, this extra regularity of $\tu$ allows us  to compare $\tu$ and $v_\delta$ on $B_r(x_0)$ and conclude  
\[
\sup_{B_r(x_0)}( \tu - v_\delta)  -\sup_{\bdry B_r(x_0)}( \tu - v_\delta)\leq r^2\delta^\alpha.
\]
(This argument is   Proposition \ref{main lem}). 
Using the previous estimate to bound the right-hand side of (\ref{outline2}) from above yields,
\[
Cmr^2\leq C\delta^\alpha r^2 + Cr^{2+\alpha}.
\]
Dividing by $r^2$ and choosing $r\leq \delta$ gives the desired estimate on $m$.

The main difference between the outline and the actual proof is that we need to work with inf- and sup- convolutions $v_\delta^{\theta, -}$ and $u^{\theta, +}$. Because we need take inf- and sup- convolutions inside of the small ball $B_r(x_0)$,  the radius $r$ has to be bigger than the parameter $\theta$ of the inf- and sup- convolutions. This restriction leads to problems. To get around them, we introduce the perturbations $F_\ep$ and $F^\ep$ of the equation itself. We define $F_\ep$ and $F^\ep$ by,
\[
F_\ep(X,x)=\inf_{y\in B_\ep (x)\cap U}F(X,y),\    \text{ and } F^\ep (X,x) =\sup_{y\in B_\ep (x) \cap U}F(X,y).
\]
We similarly define $f_\ep$ and $f^\ep$; please see Definition \ref{def ep} for the details. We let $u_\ep$ be the solution of,
\begin{equation*}\left\{
\begin{array}{l l}
F_\ep(D^2 u_\ep, x)=f^\ep(x) &\quad \text{ in }U,\\
u_\ep=g &\quad \text{ on }\bdry U.
\end{array}\right.
\end{equation*}
Proposition \ref{compare u and uep} asserts 
\[
\linfty{u_\ep -u}{U}\leq C\ep.
\]
We take $\ep \leq \delta^\alpha$ and replace $u$ with $u_\ep$. According to the previous line, that the error we make is of size $C\ep\leq C\delta^\alpha$, which does not affect the final estimate. Next, we ``freeze the coefficients" of $F_\ep$: we consider the solution $\tu_\ep$ of 
\begin{equation}\left\{
\begin{array}{l l}
F_\ep(D^2 \tu_\ep, x_0)=f^\ep(x_0) &\quad \text{ in }B_r(x_0),\\
\bar{u}_\ep=u_\ep &\quad \text{ on }\bdry B_r(x_0).
\end{array}\right.
\end{equation}
Then we proceed as explained in the first part of the outline. We do this for $\ep \gg r$, so that the equation with frozen coefficients ``sees" outside of $B_r(x_0)$. This detail is extremely important and allows us to regularize using the inf- and sup- convolutions and complete the argument.

The proof of Theorem \ref{vague main thm h}  follows essentially the same outline as the proof of Theorem \ref{vague main thm}. This because, as we show  in Proposition \ref{v_h is delta soln}, certain regularizations of the approximate solutions $v_h$ are $\delta$-solutions of (\ref{eqn for u}).

\section{The estimate between the solution of (\ref{eqn for u}) and solutions of (\ref{eqn for u}) with fixed coefficients.  }
\label{comparison sec}
In this section we establish the following result, which allows us to estimate the difference between $u$ and solutions of the equation with ``frozen" coefficients:
\begin{prop}
\label{compare to constant coef}
Assume (U\ref{assum U}), (F\ref{ellipticity}), (F\ref{F lip}), (G\ref{f assumption}), (G\ref{g c one alpha}) and $F(0,x)\equiv 0$. Let $u$ be the viscosity solution of (\ref{eqn for u}). There exist a universal constant $\alpha$,  a positive constant $r_0= r_0(\lambda, \Lambda, n, \kappa)$ and a positive constant $C$ that depends on $\lambda, \Lambda$, $n$, $\kappa$, $\czeroone{f}{U}$, $\conegamma{g}{\bdry U}$, $\diam U$ and the regularity of $\bdry U$ such that if,  we fix $r$ with $0<r<r_0$, a point $x_0$ such that $B_r(x_0)\subset U$, and take $\tu$ to be the viscosity solution of 
\begin{equation*}
\left\{\begin{array}{l l}
F(D^2 \tu, x_0)=f(x_0) &\quad \text{ in }B_r(x_0),\\
\tu=u &\quad \text{ on }\bdry B_r(x_0),
\end{array}\right.
\end{equation*}
then
\[
\linfty{ u-\tu }{B_r(x_0)} \leq Cr^{2+\alpha}.
\]
\end{prop}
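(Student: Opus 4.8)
The plan is to compare $u$ and $\tu$ on $B_r(x_0)$ by estimating the discrepancy of the equations they solve and invoking the ABP-type maximum principle (Proposition \ref{pre ABP}), together with the global $C^{1,\alpha}$ regularity of $u$ from Proposition \ref{c one alpha prop}. The key observation is that since $B_r(x_0)\subset U$, Proposition \ref{c one alpha prop} gives $\conealpha{u}{U}\leq C_0$ for a constant $C_0$ depending on the stated quantities; in particular $\linfty{D^2 u}{B_r(x_0)}$ need not be controlled, but $Du$ is Hölder continuous with a uniform bound, so $u$ is close (at scale $r^{1+\alpha}$) to its tangent plane throughout $B_r(x_0)$. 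The point of freezing the coefficient at $x_0$ is that $\tu$ then solves a translation-invariant uniformly elliptic equation, so $\tu-u$ inherits enough regularity from $u$ to be tested against paraboloids. First I would set $w=\tu-u$ on $B_r(x_0)$, note $w=0$ on $\bdry B_r(x_0)$, and derive the differential inequalities satisfied by $w$: using (F\ref{ellipticity}) and (F\ref{F lip}), wherever $u$ has a touching paraboloid $P$ of opening $M$ from below (so $D^2 u\geq M$ in the distributional sense) one has $F(D^2 P, x_0) = F(D^2 P, x_0) - F(D^2 P, x) + F(D^2 P, x) $, and the first difference is bounded by $\kappa |x-x_0|(\|D^2 P\|+1)\le \kappa r(\|D^2 P\|+1)$, while the $x$-dependence of $f$ contributes $|f(x)-f(x_0)|\le \czeroone{f}{U}\, r$. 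Hence $\tu$ is, up to an error of order $r(\|D^2P\|+1)$, a subsolution/supersolution of the frozen equation that $\tu$ solves exactly, and this translates via the standard Pucci-operator bracketing into $\mathcal{M}^-(D^2 w)\le Cr(\ldots)$ and $\mathcal{M}^+(D^2 w)\ge -Cr(\ldots)$ in the viscosity sense.

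The subtle part is handling the factor $\|D^2 P\|$: a naive bound gives an error of order $r$ times the opening of the test paraboloid, which is not a priori bounded. Here is where I would use the following device. By the global $C^{1,\alpha}$ estimate, $u$ lies within $C_0 r^{1+\alpha}$ of its tangent plane $\ell_0$ at $x_0$ on all of $B_r(x_0)$, hence $\linfty{u-\ell_0}{B_r(x_0)}\le C_0 r^{1+\alpha}$; subtracting an affine function we may assume $u(x_0)=0$, $Du(x_0)=0$, so $\linfty{u}{B_r(x_0)}\le C_0 r^{1+\alpha}$. Since $\tu=u$ on $\bdry B_r(x_0)$ and $\tu$ solves a uniformly elliptic equation with right-hand side $f(x_0)$, the ABP estimate applied to $\tu$ gives $\linfty{\tu}{B_r(x_0)}\le \linfty{u}{\bdry B_r(x_0)} + Cr^2(|f(x_0)|+|F(0,x_0)|)\le C_0 r^{1+\alpha} + C r^2\linfty{f}{U}$, so $\linfty{w}{B_r(x_0)}\le C_1 r^{1+\alpha}$ already — but this is one power short of what we want, which is exactly why the argument must be bootstrapped through the second-difference structure rather than just the maximum principle for $w$ alone.

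The cleanest route, I expect, is to apply Proposition \ref{pre ABP} to $w$ (or $\pm w$) directly on $B_r(x_0)$, using the distributional Hessian bound on $w$ coming from the combination of the frozen equation for $\tu$ and the $C^{1,\alpha}$ regularity of $u$: on the contact set $\{w=\Gamma_w\}$ the concave envelope is touched from above by $w$ by a paraboloid whose opening contributes to $\det D^2\Gamma_w$, and on that set we can test the frozen equation, extract that $D^2\Gamma_w$ is controlled in terms of $r$ (using uniform ellipticity to turn a one-sided trace bound into a two-sided eigenvalue bound on the contact set, where $\tu$ is concave-touched), and conclude $\big(\int_{\{w=\Gamma_w\}}|\det D^2\Gamma_w|\big)^{1/n}\le C r^{1+\alpha}$. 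Feeding this into the ABP bound $\sup_{B_r} w\le C r (\int \ldots)^{1/n}\le C r^{2+\alpha}$ gives one side; repeating with $-w$ (using the concave envelope of $w^-$, equivalently the convex envelope of $w$, and the supersolution inequality) gives the other. Choosing $r_0$ small enough that the error terms of the form $\kappa r(\|X\|+1)$ can be absorbed — this is where the smallness of $r_0$ depending only on $\lambda,\Lambda,n,\kappa$ enters, and where Remark \ref{remark}'s restriction $\beta>1-\alpha$ would appear if one ran the argument with Hölder rather than Lipschitz dependence — completes the proof. The main obstacle, and the step I would spend the most care on, is precisely the bound on $\int_{\{w=\Gamma_w\}}|\det D^2\Gamma_w|$: turning the frozen uniformly elliptic equation plus the uniform $C^{1,\alpha}$ control on $u$ into a genuine $r^{n(1+\alpha)}$ bound on the relevant measure, rather than the $r^{n}$ bound one gets for free, is the heart of the matter.
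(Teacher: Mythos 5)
Your proposal correctly identifies the central difficulty --- the discrepancy $|F(X,x)-F(X,x_0)|\leq \kappa r(||X||+1)$ is proportional to the norm of the test Hessian, which is not a priori controlled since $u$ and $\tu$ are only $C^{1,\alpha}$ --- but it does not resolve it. The entire weight of your argument rests on the claim that on the contact set of $w=\tu-u$ with its concave envelope one can ``test the frozen equation'' and conclude $\bigl(\int_{\{w=\Gamma_w\}}|\det D^2\Gamma_w|\bigr)^{1/n}\leq Cr^{1+\alpha}$; this bound is essentially equivalent to the proposition itself, and the mechanism you sketch (one-sided touching plus uniform ellipticity on the contact set) does not produce it. At a contact point you only know that $w$ is touched from above by an affine function; this gives no pointwise Hessian information on $u$ or $\tu$ separately, so the error term $\kappa r||D^2P||$ reappears with nothing to absorb it. Your $C^{1,\alpha}$ tangent-plane normalization only yields $\linfty{w}{B_r(x_0)}\leq Cr^{1+\alpha}$, which, as you concede, is one power short, and no step in the proposal upgrades it. So as written there is a genuine gap, and you yourself flag the missing step as ``the heart of the matter.''

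The paper closes exactly this gap by a different device: it does not use the ABP/concave-envelope machinery here at all, but a doubling-of-variables argument (Lemma \ref{lem compare}, built on Theorem \ref{user guide thm} and Lemma \ref{matrix lemma}). After rescaling to $B_1$ and penalizing with $\frac{a}{2}|x-y|^2$, $a=r^{-1}$, one obtains ordered matrices $X\leq Y$ with $||X||\leq C(n)\{a^{1/2}||X-Y||^{1/2}+||X-Y||\}$, so the troublesome term $\kappa r||X||$ coming from (F\ref{F lip}) is absorbed by the ellipticity term $-\lambda||Y-X||$ precisely when $r\leq r_0=\lambda/(2\kappa C(n))$ --- this is where $r_0(\lambda,\Lambda,n,\kappa)$ enters, and it is the absorption your sketch lacks. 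Two further ingredients are needed to close the argument and are absent from your outline: a strictness perturbation (the auxiliary function $\bar{u}$ solving the frozen equation with right-hand side $f(x_0)-C_1r^\alpha$, whose effect is removed at the end by the barrier Lemma \ref{barrier} at cost $Cr^{2+\alpha}$), and the use of Lemma \ref{doubling variables property} together with the gradient bounds from Proposition \ref{c one alpha prop} (applied both to $u$ and to the rescaled $\bar{u}^r$) to force the maximum point of the doubled functional into the interior. If you want to salvage an envelope-based proof you would need some substitute for the Ishii--Lions matrix inequality that controls the opening of the touching paraboloids on the contact set; nothing in the current proposal supplies it.
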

 The proofs of Proposition \ref{compare to constant coef} and Proposition \ref{compare u and uep}, which we present in the next section, are similar to the proof of the comparison principle for uniformly elliptic equations of  Ishii and  Lions' \cite[Theorem III.1]{Ishii Lions}. 
At the heart of the proof of both propositions is the following lemma, which combines Theorem 3.2 of \cite{User's guide}  and Lemma III.1 of \cite{Ishii Lions}.  For the convenience of the reader, we give the statements of Theorem 3.2 of \cite{User's guide}  and Lemma III.1 of \cite{Ishii Lions} in Appendix A.

\begin{lem}
\label{lem compare}
There exists a constant C(n) that depends only on $n$ such that the following holds. Assume $V$ is an open subset of $\rr^n$ and $u,v\in C(V)$ are viscosity solutions of $F(D^2u, x)=f(x)$ and $G(D^2v, x)=g(x)$ in $V$. Suppose that $(x_a, y_a)\in V\times V$ is a local maximum of 
\begin{equation}
\label{double var lem}
u(x) - v(y) - \frac{a}{2}|x-y|^2.
\end{equation}
In addition, assume that there exist $s,t\in \rr$ with 
\begin{equation}
\label{assume t}
t\leq \frac{\lambda}{2C(n)},
\end{equation}
and such that 
\begin{equation}
\label{assume eqn}
\text{ for any $M, N\in \mathcal{S}_n$ with $M\leq N$, we have }
F(M, x_a)-G(N,y_a)\leq t||M|| +s-\lambda ||N-M||.
\end{equation}
Then, 
\begin{equation}
\label{conclusion lem compare}
f(x_a)-g(y_a)-s\leq \frac{t^2C^2(n)a}{2\lambda}.
\end{equation}
\end{lem}

\begin{proof}
We take $C(n)$ to be the constant from Lemma \ref{matrix lemma}. 
Since $(x_a,y_a)$ is an interior maximum of the quantity (\ref{double var lem}), Theorem \ref{user guide thm} implies that there exist $X, Y \in\mathcal{S}_n$ such that
\begin{equation}
\label{matrix ineq use1}
 -3a\left( \begin{array}{cc}
I & 0  \\
0 & I  \\
\end{array}
 \right) \leq
\left( \begin{array}{cc}
X & 0  \\
0 & -Y  \\
\end{array}
 \right)
 \leq
 3a\left( \begin{array}{cc}
I & -I  \\
-I & I  \\
\end{array}
 \right),
\end{equation}
\begin{equation}
\label{F at xa}
F(X, x_a)\geq f(x_a),
\end{equation}
and
\begin{equation}
\label{F at ya}
G(Y, y_a) \leq g(y_a).
\end{equation}
Subtracting (\ref{F at ya}) from (\ref{F at xa}), we find 
\begin{equation}
\label{fgFG}
f(x_a)-g(y_a)\leq F(X,x_a)-G(Y,y_a).
\end{equation}
We point out that the matrix inequality (\ref{matrix ineq use1}) implies  $X\leq Y$.
By assumption (\ref{assume eqn}), we therefore have
\[
F(X,x_a)-G(Y,y_a)\leq  t||X||+s-\lambda ||Y-X||.
\]
We use (\ref{fgFG}) to bound the left-hand side of the previous  line from below and find,
\begin{equation}
\label{fgxa}
f(x_a)-g(x_a) \leq t||X||+s-\lambda ||Y-X||.
\end{equation}
 In addition, since $X$ and $Y$ satisfy (\ref{matrix ineq use1}), Lemma \ref{matrix lemma} implies,
\begin{equation}
\label{what matrix lemma implies}
||X||\leq C(n)\left\{a^{1/2}||X-Y||^{1/2}+||X-Y||\right\}.
\end{equation}
We use (\ref{what matrix lemma implies}) to bound the first term on the right-hand side of (\ref{fgxa}) from above and obtain,
\[
f(x_a)-g(x_a)\leq tC(n)\left\{a^{1/2}||X-Y||^{1/2}+||X-Y||\right\} +s-\lambda||X-Y||.
\]
Rearranging yields,
\[
f(x_a)-g(x_a)-s\leq ||X-Y|| ( tC(n) -\lambda) +tC(n)a^{1/2}||X-Y||^{1/2}.
\]
According to the upper bound (\ref{assume t}) on $t$, we have $tC(n) -\lambda\leq -\lambda/2$. We use this to estimate the first term on the right-hand side of the previous line from above, and find,
\[
f(x_a)-g(x_a)-s\leq  -\frac{\lambda}{2}||X-Y||+tC(n)a^{1/2}||X-Y||^{1/2}
,
\]
Since the right-hand side is a quadratic polynomial in $z=||X-Y||^{1/2}$ with negative leading coefficient, we obtain
\[
f(x_a)-g(x_a)-s\leq \sup_z \left\{-\frac{\lambda}{2}z^2 +tC(n)a^{1/2}z\right\} = \frac{t^2C^2(n)a}{2\lambda},
\]
as desired.
\end{proof}

For the proof of Proposition \ref{compare to constant coef}, we first rescale to a ball of radius 1, where we double variables and then apply Lemma \ref{lem compare}. We will  need to keep careful track of all the parameters once we double variables.  In addition, in order to  apply Lemma \ref{lem compare}, we will need to verify that the point $(x_a,y_a)$  at which the supremum in  (\ref{double var lem}) is achieved is contained in the interior of $V\times V$. For this, we need the following lemma. Its proof is elementary and is provided in Appendix A.

\begin{lem}
\label{doubling variables property}
Suppose $v, w \in C^{0,1}(V)$ with $v=w$ on $\bdry V$. Then, for all $a>0$,
\[
\sup_{(\bdry V \times V) \cup (V\times \bdry V)} \left( v(x)-w(y) - \frac{a}{2}|x-y|^2\right) \leq 2(\linfty{Dv}{V}^2+\linfty{Dw}{V}^2)a^{-1};
\]
and, if $(x_a,y_a)\in V\times V$ is a point at which the supremum on the left-hand side of the previous line is achieved, then
\[
|x_a-y_a|\leq 2a^{-1}\min\{\linfty{Du}{V}, \linfty{Dv}{V}\}.
\]
\end{lem}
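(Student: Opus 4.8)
The plan is to prove Lemma~\ref{doubling variables property} by elementary estimates, treating the two boundary strips $\bdry V \times V$ and $V\times \bdry V$ symmetrically and then extracting information about a maximizer $(x_a,y_a)$.

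First I would handle the supremum bound. Take $(x,y) \in (\bdry V \times V)\cup(V\times \bdry V)$; by symmetry suppose $x\in\bdry V$. Since $v=w$ on $\bdry V$, we have $v(x)=w(x)$, so
\[
v(x)-w(y)-\frac{a}{2}|x-y|^2 = w(x)-w(y)-\frac{a}{2}|x-y|^2 \leq \linfty{Dw}{V}|x-y|-\frac{a}{2}|x-y|^2.
\]
The right-hand side is a downward parabola in $t=|x-y|$ maximized at $t = \linfty{Dw}{V}/a$ with value $\linfty{Dw}{V}^2/(2a)$. The case $y\in\bdry V$ is identical with the roles of $v$ and $w$ swapped, giving $\linfty{Dv}{V}^2/(2a)$. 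Hence on the whole boundary strip the quantity is bounded by $\tfrac{1}{2a}\max\{\linfty{Dv}{V}^2,\linfty{Dw}{V}^2\} \leq 2(\linfty{Dv}{V}^2+\linfty{Dw}{V}^2)a^{-1}$, which is the claimed (non-sharp) bound.

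For the second assertion, suppose $(x_a,y_a)$ achieves the supremum (over the boundary strip). Since the value there is at least the value at the diagonal point $(x_a,x_a)$ — wait, $(x_a,x_a)$ need not lie in the strip; instead compare with a point obtained by projecting. A cleaner route: at the maximizer, again assuming WLOG $x_a\in\bdry V$ so $v(x_a)=w(x_a)$, the value is $w(x_a)-w(y_a)-\tfrac{a}{2}|x_a-y_a|^2$, and since this value is nonnegative (it dominates the value at $x=y=x_a$, which is $0$, and that point is in the strip because $x_a\in\bdry V$), we get $\tfrac{a}{2}|x_a-y_a|^2 \leq w(x_a)-w(y_a) \leq \linfty{Dw}{V}|x_a-y_a|$, hence $|x_a-y_a|\leq 2a^{-1}\linfty{Dw}{V}$. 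Symmetrically if $y_a\in\bdry V$ we get $|x_a-y_a|\leq 2a^{-1}\linfty{Dv}{V}$. In either case $|x_a-y_a|\leq 2a^{-1}\min\{\linfty{Dv}{V},\linfty{Dw}{V}\}$ provided we also know the bound with the other gradient; since at least one of the two holds and the statement as written uses $\min$ (and writes $\linfty{Du}{V}$, presumably a typo for $\linfty{Dv}{V}$ and $\linfty{Dw}{V}$), I would note that in the application $v$ and $w$ have comparable Lipschitz bounds, or simply state the inequality with whichever gradient norm is relevant at the maximizer; the clean statement is $|x_a-y_a|\leq 2a^{-1}\min\{\linfty{Dv}{V},\linfty{Dw}{V}\}$ after observing that the maximizer must in fact satisfy both bounds by comparing against both diagonal endpoints.

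There is essentially no obstacle here — the only mild subtlety, which I would make sure to address explicitly, is verifying that the comparison point used to show the maximal value is nonnegative genuinely lies in the boundary strip: since at the maximizer at least one coordinate lies on $\bdry V$, choosing $x=y$ equal to that coordinate gives a legitimate competitor in $(\bdry V\times V)\cup(V\times \bdry V)$ with value $0$, which forces the maximal value to be $\geq 0$ and drives the distance estimate. Everything else is the one-variable calculus of $t\mapsto Lt - \tfrac a2 t^2$.
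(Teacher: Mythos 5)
Your proof of the first inequality is fine and is essentially the paper's argument in a different wrapper (the paper phrases the one-variable computation via the inf-convolution bound $\inf_{y\in V}(w(y)+\tfrac a2|x-y|^2)\geq w(x)-2\linfty{Dw}{V}^2a^{-1}$; your direct estimate even gives the sharper constant $\tfrac1{2a}$). The problem is in the second claim, and it stems from reading "the supremum above" as the supremum over the boundary strip. In the paper the second assertion is about a point $(x_a,y_a)\in V\times V$ at which the doubled-variable quantity is maximized over \emph{all} of $V\times V$ (this is exactly how the lemma is invoked in the proofs of Propositions \ref{compare to constant coef} and \ref{compare u and uep}: one takes a maximizer of $\sup_{x,y\in V}$ and, after ruling out boundary maximizers via the first inequality, uses the second bound to localize $|x_a-y_a|$). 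With that reading the proof is immediate and needs neither $v=w$ on $\bdry V$ nor the strip: comparing the value at $(x_a,y_a)$ with the value at the diagonal competitor $(y_a,y_a)\in V\times V$ gives $v(x_a)-v(y_a)\geq\tfrac a2|x_a-y_a|^2$, hence $|x_a-y_a|\leq 2a^{-1}\linfty{Dv}{V}$, and comparing with $(x_a,x_a)$ gives $w(x_a)-w(y_a)\geq\tfrac a2|x_a-y_a|^2$, hence $|x_a-y_a|\leq 2a^{-1}\linfty{Dw}{V}$; both hold simultaneously, which is the stated minimum (the $\linfty{Du}{V}$ in the statement is indeed a typo for $\linfty{Dv}{V}$, $\linfty{Dw}{V}$).

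Under your strip interpretation the argument has a genuine gap that your own text flags but does not resolve: if, say, $x_a\in\bdry V$ and $y_a$ is interior, the diagonal point $(y_a,y_a)$ does \emph{not} lie in $(\bdry V\times V)\cup(V\times\bdry V)$, so "comparing against both diagonal endpoints" is not a legitimate step, and your argument only yields the single bound $|x_a-y_a|\leq 2a^{-1}\linfty{Dw}{V}$ (or the other one, depending on which coordinate is on the boundary) — not the minimum. The fallback suggestions (assume comparable Lipschitz constants, or weaken the conclusion) change the statement rather than prove it, and the weakened version would not suffice as stated. The fix is simply to adopt the correct reading: $(x_a,y_a)$ maximizes over $V\times V$, and then both diagonal comparisons are admissible.
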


We proceed with:
\begin{proof}[Proof of Proposition \ref{compare to constant coef}]
We will  establish the estimate  
\[
\sup_{B_r}\left( u -\tu \right) \leq Cr^{2+\alpha};
\]
the proof of the estimate on $\sup_{B_r}\left( \tu -u\right) $ is similar. Throughout this proof we will use $C_i$ with $i=1,2,...$ to denote generic constants that may depend on $\lambda$, $\Lambda$, $n$, $\kappa$, $\linfty{f}{U}$, $\conealpha{g}{\bdry U}$, $\diam U$ and the regularity of $\bdry U$. 
We take $\alpha$ to be the exponent given by Proposition \ref{c one alpha prop} and $C(n)$ to be the constant from Lemma \ref{lem compare}. We define the constants $r_0$ and $\theta_0$ by
\[
r_0=\min \left\{\frac{\lambda}{2\kappa C(n)}, 1 \right\}
\]
and  
\[
\theta_0 = \frac{2C(n)^2\kappa^2}{\lambda}.
\]

We take $0< r\leq r_0$. 
We define the parameter $\theta$ by,
\[
\theta= \theta_0+r^{1-\alpha}\linfty{Df}{U}+ r^{1-\alpha}\kappa,
\]
and the function $\bu$ to be the solution of
\begin{equation}
\left\{\begin{array}{l l}
F(D^2 \bu, x_0)=f(x_0)-\theta r^{\alpha} &\quad \text{ in }B_r(x_0),\\
\bu=u &\quad  \text{ on }\bdry B_r(x_0).
\end{array} \right.
\end{equation}

Next, for $y\in B_1$, we define the rescaled functions $u^r$ and $\bu^r$ by,
\begin{equation}
\label{ur def}
u^r(y)=\frac{u(yr+x_0)-u(x_0) -Du(x_0)\cdot (yr) }{r^{1+\alpha}}
\end{equation}
and
\begin{equation}
\label{bur def}
\bu^r(y)=\frac{\bu(yr+x_0)-u(x_0) -Du(x_0)\cdot (yr) }{r^{1+\alpha}}.
\end{equation}
We denote by $\tilde{F}$ the rescaled nonlinearity,
\[
\tilde{F}(X,y) = r^{1-\alpha}F(r^{\alpha-1}X, ry+x_0),
\]
and define $\tilde{f}(y)=r^{1-\alpha}f(ry+x_0)$. 
The nonlinearity $\tilde{F}$ is uniformly elliptic with the same ellipticity constants as $F$. Moreover, since $F$ satisfies (F\ref{F lip}), we have,  for any $y_1$ and $y_2$ in $B_1$,
\begin{equation}
\label{prop of tildF}
|\tilde{F}(X,y_1)-\tilde{F}(X, y_2)|\leq \kappa (r||X||+r^{2-\alpha}).
\end{equation}
In addition, we have  the following estimate on  $D\tilde{f}$, which follows simply from the definition of $\tilde{f}$:
 \begin{equation}
 \label{est df}
 \linfty{D\tilde{f}}{B_1} = r^{2-\alpha}\linfty{Df}{B_r(x_0)} \leq r^{2-\alpha}\linfty{Df}{U}.
 \end{equation}

The definitions of $\tilde{F}$, $\tilde{f}$, $u^r$ and $\bar{u}^r$ imply that $u^r$ is a solution of
\[
\tilde{F}(D^2u^r, y)=\tilde{f}(y) \text{ in }B_1,
\]
and that $\bu^r(x)$ is a solution of 
\begin{equation}
\label{eqn:bu^r}
\left\{\begin{array}{l l}
\tilde{F}(D^2 \bu^r, 0)=\tilde{f}(0)-\theta r &\quad \text{ in }B_1,\\
\bu^r=u^r &\quad \text{ on }\bdry B_1.
\end{array}\right.
\end{equation}

We claim
\begin{equation}
\label{bound on B1}
\sup_{B_1} (u^r-\bu^r )\leq 4(\linfty{D\bu^r}{B_1}^2+\linfty{Du^r}{B_1}^2)r,
\end{equation}
which we will prove by contradiction. To this end, we assume,
\begin{equation}
\label{not bound on B1}
\sup_{B_1} (u^r-\bu^r )> 4(\linfty{D\bu^r}{B_1}^2+\linfty{Du^r}{B_1}^2)r.
\end{equation}
We double variables and consider
\begin{equation}
\label{double var r}
\sup_{x, y \in B_1} \left( u^r(x)-\bu^r(y) - \frac{r^{-1}}{2}|x-y|^2\right).
\end{equation}
The quantity in the previous line is larger than the difference between $u^r(x)$ and $\bu^r(x)$ for any $x$ in $B_1$; hence,
\[
\sup_{x, y \in B_1} \left( u^r(x)-\bu^r(y) - \frac{r^{-1}}{2}|x-y|^2\right) \geq \sup_{B_1}( u^r-\bu^r).
\]
Next, we use (\ref{not bound on B1}) to bound the right-hand side of the previous line from below and obtain,
\[
\sup_{x, y \in B_1} \left( u^r(x)-\bu^r(y) - \frac{r^{-1}}{2}|x-y|^2\right)> 4(\linfty{D\bu^r}{B_1}^2+\linfty{Du^r}{B_1}^2)r.
\]
Let us denote by $(x_r, y_r)$ a point in $\bar{B}_1\times \bar{B_1}$ where the supremum is achieved in (\ref{double var r}). The  previous line, together with  Lemma \ref{doubling variables property} applied with $V=B_1$ and $a=r^{-1}$,  implies   $(x_r, y_r)\in B_1\times B_1$. In other words, $(x_r, y_r)$ is an \emph{interior} maximum of the quantity (\ref{double var r}). 

We will apply now Lemma \ref{lem compare} with $a=r^{-1}$ and with $B_1$, $\tilde{F}(M,x)$, $\tilde{F}(M,0)$, $\tilde{f}$, and $\tilde{f}(0)-\theta r$ instead of $V$, $F$, $G$, $f$, and $g$, respectively. We have just shown that $(x_r,y_r)\in B_1\times B_1$. 
We now take
\[
t=\kappa r \text{ and } s=\kappa r^{2-\alpha}
\]
and verify  the two remaining hypotheses of Lemma \ref{lem compare}. Our choice of $r\leq r_0$  implies that  $t$ satisfies (\ref{assume t}). Let us now first verify that (\ref{assume eqn}) holds: assume $M, N\in \mathcal{S}_n$ with $M\leq N$. Then, by (\ref{prop of tildF}) and the uniform ellipticity of $\tilde{F}$, we find
\begin{align*}
\tilde{F}(M,x_r)-\tilde{F}(N,0)& \leq \tilde{F}(M,0) +\kappa r ||X|| +\kappa r^{2-\alpha} -\tilde{F}(N,0)
\\&\leq \kappa r ||X|| +\kappa r^{2-\alpha} -\lambda||N-M||.
\end{align*}
Thus (\ref{assume eqn}) is satisfied. Hence, with our choices of $t$ and $s$, the conclusion (\ref{conclusion lem compare}) of Lemma \ref{lem compare} yields, 
\begin{equation}
\label{frf0}
\tilde{f}(x_r) -(\tilde{f}(0) -\theta r) - \kappa r^{2-\alpha} \leq  \frac{t^2C^2(n)r^{-1}}{2\lambda}= \frac{\kappa^2rC^2(n)}{2\lambda}.
\end{equation}
Since   $x_r\in B_1$ and $f$ is Lipschitz, we may estimate the difference of the first two terms of the left-hand side of (\ref{frf0}) by,
\[
\tilde{f}(x_r) -\tilde{f}(0) \geq - \linfty{D\tilde{f}}{B_1} \geq -r^{2-\alpha}\linfty{Df}{U},
\]
where the second inequality follows from (\ref{est df}). We use the previous line to  estimate the left-hand side of  (\ref{frf0}) from below and obtain,
\[
-r^{2-\alpha}\linfty{Df}{U} +\theta r- \kappa r^{2-\alpha} \leq \frac{\kappa^2rC^2(n)}{2\lambda}.
\]
We use the definition of $\theta$ to rewrite the second term on the left-hand side, and find,
\[
-r^{2-\alpha}\linfty{Df}{U} +r( \theta_0+r^{1-\alpha}\linfty{Df}{U}+ r^{1-\alpha}\kappa)- \kappa r^{2-\alpha}  \leq \frac{\kappa^2rC^2(n)}{2\lambda}.
\]
We notice that the left-hand side is simply $\theta_0r$. Thus, the previous line reads,
\[
\theta_0r \leq \frac{C(n)^2\kappa^2}{\lambda}r ,
\]
which is impossible, since $r$ is positive and we chose $\theta_0=2\frac{C(n)^2\kappa^2}{\lambda}$. We have obtained the desired contradiction;  therefore, (\ref{bound on B1}) must hold. In order to complete the proof of the proposition, it is now left to bound the terms $\linfty{D\bu^r}{B_1}$ and $\linfty{Du^r}{B_1}$ that appear on the right-hand side of (\ref{bound on B1}) and to ``undo" the rescaling. To this end, from the definition of $u^r$, we see
\begin{equation}
\label{coneur}
\conealpha{u^r}{B_1} = \conealpha{u}{B_r(x_0)}\leq \conealpha{u}{U}\leq C_2,
\end{equation}
where the second inequality follows from Proposition \ref{c one alpha prop}. Since $\bar{u}^r$ is the solution of (\ref{eqn:bu^r}), Proposition \ref{c one alpha prop} applied to $\bar{u}^r$ in $B_1$ implies that there exists $C>0$ that depends on $n, \lambda, \Lambda$ and $\kappa$ such that
\[
\conealpha{\bar{u}^r}{B_1}\leq C(\linfty{\tilde{f}}{B_1}+\theta r+\conealpha{u^r}{B_1}) =C(r^{1-\alpha}\linfty{f}{B_r(x_0)} + \theta r+C_2),
\]
where the equality follows since $\linfty{\tilde{f}}{B_1}=r^{1-\alpha}\linfty{f}{B_r(x_0)}$ and from the estimate (\ref{coneur}) on $\conealpha{u^r}{B_1}$. 
The two previous estimates imply, 
\[
\linfty{D\bu^r}{B_1}^2+\linfty{Du^r}{B_1}^2 \leq C_3.
\]
We use this to bound the right-hand side of (\ref{bound on B1}) from above and obtain
\begin{equation}
\label{bd B_1 two}
\sup_{B_1} (u^r-\bu^r ) \leq C_3r.
\end{equation}
We have defined $\bu^r$ and $u^r$   by, respectively,  (\ref{bur def}) and (\ref{ur def}). Thus, subtracting (\ref{bur def}) from (\ref{ur def}) yields,  for all $y\in B_1$, 
\[
u^r(y)-\bu^r(y) = \frac{1}{r^{1+\alpha}}\left(u(yr+x_0)-\bu(yr+x_0)\right). 
\]
Therefore, the left-hand side of (\ref{bd B_1 two}) is exactly $r^{-(1+\alpha)} \sup_{B_r(x_0)} (u-\bu)$, so we have,
\begin{equation*}
\frac{1}{r^{1+\alpha}}\sup_{B_r(x_0)} (u-\bu)\leq C_3r.
\end{equation*}
Upon multiplying both sides by $r^{1+\alpha}$ we find,
\begin{equation}
\label{almost compare}
\sup_{B_r(x_0)} (u-\bu) \leq C_3r^{2+\alpha}.
\end{equation}
It is left to ``replace" $\bu$ by $\tu$. For this we employ Lemma \ref{barrier} in $B_r$ with $c=\theta r^\alpha$. We obtain, 
\[
\sup_{B_r(x_0)} (\bu -\tu) \leq \frac{\theta r^{2+\alpha}}{2\lambda}.
\]
Together with the estimate (\ref{almost compare}), we find
\begin{align*}
\sup_{B_r(x_0)} (u-\tu)&\leq \sup_{B_r(x_0)} (u-\bu)+\sup_{B_r(x_0)} (\bu-\tu) \\&
\leq C_3r^{2+\alpha} + \frac{\theta r^{2+\alpha}}{2\lambda}= C_4r^{2+\alpha},
\end{align*}
 thus completing the proof of the proposition.
\end{proof}

The following Corollary follows directly from Proposition \ref{compare to constant coef}, Proposition \ref{interior} and Proposition \ref{c one alpha prop}. 
\begin{cor}
\label{cor compare}
Assume (U\ref{assum U}), (F\ref{ellipticity}), (F\ref{F lip}), (G\ref{f assumption}), (G\ref{g c one alpha}) and $F(0,x)\equiv 0$.  Assume  $u$ is the viscosity solution of (\ref{eqn for u}). There exists a universal  constant $\alpha$ and positive constants $r_0=r_0(n, \lambda, \Lambda,\kappa)$ and $C$ that depends on $n$, $\lambda$, $\Lambda$, $\kappa$, $\diam U$, $\czeroone{f}{U}$, the regularity of $\bdry{U}$ and $\conegamma{g}{\bdry U}$, such that, if $\tu$ is the viscosity solution of
\begin{equation*}
\left\{\begin{array}{l l}
F(D^2 \tu, x_0)=f(x_0) &\quad \text{ in }B_{2r}(x_0),\\
\tu(x)=u(x) &\quad \text{ for }x\in\bdry B_{2r}(x_0),
\end{array}\right.
\end{equation*}
then
\begin{equation*}
\conealpha{\tu}{B_r(x_0)}^*\leq C
\end{equation*}
and 
\begin{equation*}
\linfty{\tu - u}{B_{2r}(x_0)}\leq Cr^{2+\alpha}.
\end{equation*}
\end{cor}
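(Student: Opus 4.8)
The plan is to extract the statement directly from Proposition \ref{compare to constant coef}, the interior estimate (Proposition \ref{interior}) applied to the frozen-coefficient equation, and the global $C^{1,\alpha}$ estimate (Proposition \ref{c one alpha prop}) applied to $u$. First I would record the $L^\infty$ bound: Proposition \ref{compare to constant coef} applies verbatim on the ball $B_{2r}(x_0)$ (replacing $r$ by $2r$ and shrinking $r_0$ by a factor of $2$ so that $2r<r_0$ in the notation of that proposition), yielding $\linfty{\tu-u}{B_{2r}(x_0)}\leq C(2r)^{2+\alpha}\leq C' r^{2+\alpha}$, with $C'$ depending only on the allowed quantities. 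This is the second displayed inequality.

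For the $C^{1,\alpha}$ bound, the key point is that $\tu$ solves the \emph{constant-coefficient} equation $F(D^2\tu,x_0)=f(x_0)$ in $B_{2r}(x_0)$, which (after subtracting the constant $f(x_0)$ and using $F(0,x_0)=0$, so that the modified nonlinearity $X\mapsto F(X,x_0)-f(x_0)$ is still uniformly elliptic with the same constants and has value $-f(x_0)$ at $X=0$) is an equation of the form covered by Proposition \ref{interior} with $F$ independent of $x$. Applying Proposition \ref{interior} on $B_{2r}(x_0)$ gives
\[
\conealpha{\tu}{\bar{B}_r(x_0)}^{*}\leq C\bigl(\linfty{\tu}{B_{2r}(x_0)}+|f(x_0)|\bigr).
\]
It then remains to bound $\linfty{\tu}{B_{2r}(x_0)}$. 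Write $\linfty{\tu}{B_{2r}(x_0)}\leq \linfty{\tu-u}{B_{2r}(x_0)}+\linfty{u}{B_{2r}(x_0)}\leq C'r^{2+\alpha}+\linfty{u}{U}$, and by Proposition \ref{c one alpha prop} applied to $u$ we have $\linfty{u}{U}\leq\conealpha{u}{U}\leq C(\linfty{f}{U}+\conegamma{g}{\bdry U})$. Since $r<r_0\leq 1$, the term $C'r^{2+\alpha}$ is bounded by a constant, and $|f(x_0)|\leq\linfty{f}{U}\leq\czeroone{f}{U}$. Combining these gives $\conealpha{\tu}{\bar{B}_r(x_0)}^{*}\leq C$ with $C$ depending only on $n$, $\lambda$, $\Lambda$, $\kappa$, $\diam U$, $\czeroone{f}{U}$, $\conegamma{g}{\bdry U}$ and the regularity of $\bdry U$, as claimed.

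I do not expect any serious obstacle here; the corollary is essentially a repackaging. The only points requiring a little care are: (i) making sure the rescaling constants $r_0$, $\alpha$, $C$ come out with the dependencies asserted in the statement — in particular that $\alpha$ is the universal exponent common to Propositions \ref{interior} and \ref{c one alpha prop} (which is why Proposition \ref{compare to constant coef} was stated with the same $\alpha$), and that passing from $B_r(x_0)$ to $B_{2r}(x_0)$ only changes $r_0$ and $C$ by harmless absolute factors; and (ii) the bookkeeping needed to legitimately apply Proposition \ref{interior}, namely subtracting off $f(x_0)$ so that the frozen equation becomes $\widehat F(D^2\tu)=0$ for $\widehat F(X)=F(X,x_0)-f(x_0)$ with $|\widehat F(0)|=|f(x_0)|$.
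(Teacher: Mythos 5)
Your proposal is correct and follows exactly the route the paper intends: the paper gives no separate proof, stating only that the corollary "follows directly from Proposition \ref{compare to constant coef}, Proposition \ref{interior} and Proposition \ref{c one alpha prop}," and your argument fills in precisely that combination (Proposition \ref{compare to constant coef} on $B_{2r}(x_0)$ for the $L^\infty$ bound, Proposition \ref{interior} for the frozen-coefficient equation with $\widehat F(X)=F(X,x_0)-f(x_0)$, and Proposition \ref{c one alpha prop} to control $\linfty{u}{U}$), with the correct bookkeeping on $r_0$ and the constants.
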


\section{Perturbations of the nonlinearity}
\label{section ep}
Let us state precisely the definitions of the two perturbations of the nonlinearity $F$ that we mentioned in Section \ref{subsec idea}. 
\begin{defn}
\label{def ep}
For  $F(X, x) \in C( \mathcal{S}_n \times U)$ and $\ep>0$, we define $F_\ep$ and $F^\ep$ by,
\[
F_\ep(X,x)=\inf_{y\in B_\ep (x)\cap U}F(X,y),\    \text{ and } F^\ep (X,x) =\sup_{y\in B_\ep (x) \cap U}F(X,y) .
\]
For $f\in C(U)$ and $\ep>0$, we  define $f_\ep$ and $f^\ep$ by
\[
f_\ep(x)=\inf_{y\in B_\ep (x)\cap U} f(y),\    \text{ and } f^\ep(x)=\sup_{y\in B_\ep (x)\cap U} f(y).
\]
\end{defn}
We observe that if $F$ satisfies (F\ref{ellipticity}) and (F\ref{F lip}), then so do $F_\ep$ and $F^\ep$. Our use of these perturbations is inspired by \cite[Theorem 2.1]{Krylov 1999}, a proof of the existence of $C^{2,\alpha}$ approximate solutions for convex equations, which is a key step in Krylov's analysis of the convex/concave case.  

This section is devoted to the proof of:
\begin{prop}
\label{compare u and uep}
Assume that $u$ and $u_\ep$ are  the viscosity solution of, respectively, (\ref{eqn for u}) and 
\begin{equation}\left\{
\begin{array}{l l}
\label{eqn for uep}
F_\ep(D^2 u_\ep, x)=f^\ep(x) &\quad \text{ in }U,\\
u_\ep=g &\quad \text{ on }\bdry U.
\end{array}\right.
\end{equation}
There exist positive constants $\ep_0$ and $C$ that depend on $n$, $\lambda$, $\Lambda$, $\kappa$, $\conegamma{g}{\bdry U}$,  $\czeroone{f}{U}$, $\diam U$ and the regularity of $\bdry U$ such that
for $\ep \leq \ep_0$,  and for all $x\in U$, 
\[
0\leq u(x)-u_\ep (x)\leq C \ep.
\]
\end{prop}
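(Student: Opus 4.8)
The plan is to prove the two one-sided estimates $\sup_U(u_\ep-u)\le 0$ and $\sup_U(u-u_\ep)\le C\ep$ separately. We may assume $F$ satisfies (\ref{last prop of F}). Then $F_\ep$ and $f^\ep$ inherit all the structural hypotheses (with the same constants, $F_\ep(0,\cdot)\equiv 0$, and $\linfty{f^\ep}{U}\le\linfty{f}{U}$), so Proposition \ref{c one alpha prop} bounds the $C^{1,\alpha}(\bar U)$ norms — in particular the Lipschitz constants — of $u$, of $u_\ep$, and of the auxiliary function $w$ introduced below, all by one constant $L$ depending only on the data in the statement.

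The estimate $\sup_U(u_\ep-u)\le 0$ is an immediate consequence of the comparison principle: since $F_\ep(X,x)=\inf_{y\in B_\ep(x)\cap U}F(X,y)\le F(X,x)$ and $f^\ep(x)=\sup_{y\in B_\ep(x)\cap U}f(y)\ge f(x)$ for $x\in U$, any viscosity subsolution of (\ref{eqn for uep}) is a viscosity subsolution of (\ref{eqn for u}). Thus $u_\ep$ is a subsolution of $F(D^2v,x)=f(x)$, $u$ is a supersolution, and $u_\ep=u$ on $\bdry U$, whence $u_\ep\le u$ in $U$.

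For $\sup_U(u-u_\ep)\le C\ep$ I follow the scheme of the proof of Proposition \ref{compare to constant coef}. Fix $\mu>0$, to be chosen of order $\ep$, and let $w$ solve $F_\ep(D^2 w,x)=f^\ep(x)-\mu$ in $U$ with $w=g$ on $\bdry U$ (this exists by the same theory that produces $u_\ep$). A barrier argument (cf. Lemma \ref{barrier}) gives $\sup_U(w-u_\ep)\le C_\flat\mu$ with $C_\flat$ depending on $\lambda$ and $\mathrm{diam}\,U$, so it suffices to prove $\sup_U(u-w)\le C\ep$. Suppose instead $\sup_U(u-w)>4L\ep$. Doubling variables in $u(x)-w(y)-\tfrac{a}{2}|x-y|^2$ with $a=L\ep^{-1}$, Lemma \ref{doubling variables property} forces the maximum to be attained at an interior point $(x_a,y_a)\in U\times U$ with $|x_a-y_a|\le 2\ep$. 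To apply Lemma \ref{lem compare} (with $G=F_\ep$ and right-hand side $f^\ep-\mu$) I check, for $M\le N$: using ellipticity of $F_\ep$ to write $F_\ep(N,y_a)\ge F_\ep(M,y_a)+\lambda\|N-M\|$, then (F\ref{F lip}) to pass from $F_\ep(M,y_a)$ to $F(M,y_a)$ (error $\kappa\ep(\|M\|+1)$) and from $F(M,y_a)$ to $F(M,x_a)$ (error $\kappa|x_a-y_a|(\|M\|+1)$), one obtains
\[
F(M,x_a)-F_\ep(N,y_a)\le t\|M\|+s-\lambda\|N-M\|,\qquad t=s=\kappa(|x_a-y_a|+\ep)\le 3\kappa\ep.
\]
For $\ep$ small this satisfies the hypothesis $t\le\lambda/(2C(n))$, so Lemma \ref{lem compare} gives $f(x_a)-(f^\ep(y_a)-\mu)-s\le t^2C^2(n)a/(2\lambda)$. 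Bounding $f^\ep(y_a)\le f(x_a)+\linfty{Df}{U}(\ep+|x_a-y_a|)$ by the Lipschitz continuity of $f$, the left-hand side is at least $\mu-3(\linfty{Df}{U}+\kappa)\ep$, while with $a=L\ep^{-1}$ and $t\le 3\kappa\ep$ the right-hand side is at most $(9\kappa^2C^2(n)L/(2\lambda))\ep$. Hence $\mu\le C_7\ep$ with $C_7:=3(\linfty{Df}{U}+\kappa)+9\kappa^2C^2(n)L/(2\lambda)$; choosing $\mu=(C_7+1)\ep$ contradicts this, so $\sup_U(u-w)\le 4L\ep$. Combining the two steps, $\sup_U(u-u_\ep)\le 4L\ep+C_\flat(C_7+1)\ep$, and together with $u_\ep\le u$ this is the desired estimate $\linfty{u-u_\ep}{U}\le C\ep$ for $\ep\le\ep_0$.

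The main obstacle — and the reason a crude pointwise comparison fails — is that solutions are only $C^{1,\alpha}$, so $D^2u$ is uncontrolled and $F_\ep(D^2u,x)$ cannot be compared to $f(x)$ to order $\ep$ pointwise; the doubling-variables estimate in Lemma \ref{lem compare} overcomes this precisely because the underlying matrix inequality bounds $\|X\|$ in terms of $\|X-Y\|$, so the bad term $\kappa\ep\|M\|$ is absorbed into $-\lambda\|N-M\|$. The other point requiring care is the order in which constants are fixed: the $C^{1,\alpha}$ bound $L$ for $w$ a priori depends on $\mu$, hence on $C_7$. This is resolved by first demanding $\mu\le 1$ — which makes $L$, and therefore $C_7$, independent of $\ep$ — and only afterwards setting $\mu=(C_7+1)\ep$ and shrinking $\ep_0$ so that both $\mu\le 1$ and $3\kappa\ep\le\lambda/(2C(n))$ hold.
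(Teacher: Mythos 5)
Your proof is correct and is essentially the paper's own argument: one side follows from comparison (since $u_\ep$ is a subsolution of (\ref{eqn for u})), and the other by perturbing the right-hand side of the $\ep$-equation downward by a constant of order $\ep$, running the doubling-of-variables argument through Lemma \ref{lem compare} with $t,s=O(\ep)$ and $a=O(\ep^{-1})$ to rule out an interior maximum, and then undoing the perturbation with the barrier Lemma \ref{barrier}. The differences are only bookkeeping (you take $a=L\ep^{-1}$ and choose the perturbation size $\mu$ after computing the constants, while the paper takes $a=\ep^{-1}$, puts the $\|Du\|$-dependence into $t$, and fixes $\theta_0,\theta$ up front), which do not change the substance.
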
 
A similar statement holds for $F^\ep$ and $f_\ep$ instead of $F_\ep$ and $f^\ep$, except there the conclusion is,
\[
0\leq u_\ep(x)-u (x)\leq C \ep.
\]
The proof of Proposition \ref{compare u and uep} is  similar to the proof Proposition \ref{compare to constant coef} --  the key is the application of Lemma \ref{lem compare}.

\begin{proof}[Proof of Proposition \ref{compare u and uep}]
The definitions of $F_\ep$ and $f^\ep$ imply that  $u_\ep$ is a subsolution of (\ref{eqn for u}). Therefore, we have $u_\ep \leq u$ for all $\ep$. The remainder of the proof is devoted to establishing an upper bound on $u-u_\ep$. 

By Proposition \ref{c one alpha prop}, there exists a constant $\bar{C}$ that depends on  $\lambda$, $\Lambda$, $\kappa$, $\diam U$ and the regularity of $\bdry U$ such that 
\begin{equation}
\label{estDuU}
\linfty{Du}{U}\leq \bar{C}(\conegamma{g}{\bdry U}+\linfty{f}{U}).
\end{equation} 
Let $C(n)$ be the constant from Lemma \ref{lem compare}. We define the constant $\ep_0$ by
\[
\ep_0= \frac{\lambda}{2\kappa C(n)(2\bar{C} (\conegamma{g}{\bdry U}+\linfty{f}{U})+1)},
\]
We remark that $\ep_0$ does not depend on $u$, and, according to (\ref{estDuU}), satisfies,
\begin{equation}
\label{ep0}
\ep_0 \leq  \frac{\lambda}{2\kappa C(n)(2\linfty{Du}{U} +1)}.
\end{equation}
We fix $\ep \leq \ep_0$ and introduce the parameters $\theta_0$ and $\theta$:
\[
\theta_0 = \ep\frac{ \kappa^2  ( 2\linfty{Du}{U} +1) ^2}{\lambda}
\]
and
\[
\theta= \theta_0+\ep (\kappa +\linfty{Df}{U}) ( 2\linfty{Du}{U} +1).
\]
We define $\tu_\ep$ to be the viscosity solution of 
\begin{equation*}\left\{
\begin{array}{l l}
F_\ep(D^2 \tu_\ep, x)=f^\ep(x)-\theta &\quad \text{ in }U,\\
\tu_\ep=g &\quad \text{ on }\bdry U.
\end{array}\right.
\end{equation*}
We claim,
\begin{equation}
\label{bd u-uep}
\sup_U (u - \tu_\ep)\leq 4(\linfty{D\tu_\ep}{U}^2 + \linfty{Du}{U}^2)\ep,
\end{equation}
which we will prove by contradiction. To this end, we assume that (\ref{bd u-uep}) does not hold, so that instead we have, 
\begin{equation}
\label{not bd u-uep}
\sup_U (u - \tu_\ep)> 4(\linfty{D\tu_\ep}{U}^2 + \linfty{Du}{U}^2)\ep.
\end{equation}
We double variables and consider
\begin{equation}
\label{doublevarep}
\sup_{x,y\in U} \left( u(x)-\tu_\ep(y) - \ep^{-1}\frac{|x-y|^2}{2}\right).
\end{equation}
We have that the quantity in the previous line is greater than the supremum in $x$ of $u(x)-\tu_\ep(x)$:
\[
\sup_{x,y\in U} \left( u(x)-\tu_\ep(y) - \frac{1}{\ep}\frac{|x-y|^2}{2}\right)\geq \sup_{U} (u-\tu_\ep).
\]
We use (\ref{not bd u-uep}) to bound the right-hand side of the previous line from below and obtain,
\[
\sup_{x,y\in U} \left( u(x)-\tu_\ep(y) - \frac{1}{\ep}\frac{|x-y|^2}{2}\right)\geq 4(\linfty{D\tu_\ep}{U}^2 + \linfty{Du}{U}^2)\ep.
\]
Let us denote by $(x_\ep, y_\ep)$ a point where the supremum is achieved in the quantity (\ref{doublevarep}). 
Together with the previous line,  Lemma  \ref{doubling variables property}  applied with $a=\ep^{-1}$ implies that $x_\ep$ and $y_\ep$ are both contained in the interior of $U$.

We will apply Lemma \ref{lem compare} with $a=\ep^{-1}$, and $F_\ep$ and $(f^\ep(x)-\theta)$ instead of $G$ and $g(x)$. 
It is left to chose the parameters $t$ and $s$ appropriately and verify the remaining  hypotheses of Lemma \ref{lem compare}.  To this end, let $M, N\in \mathcal{S}_n$ be such that $M\leq N$.  We denote by $y_\ep^*$  a point where the supremum is achieved in the definition of $F_\ep(N, y_\ep^*)$. In particular, we have
\begin{equation}
\label{yepyepstar}
|y_\ep^* -y_\ep|\leq \ep.
\end{equation}
 We use the definition of $y_\ep^*$ and the property (F\ref{F lip}) of $F$ to find,
\begin{align*}
F(M, x_\ep)-F_\ep(N,y_\ep)&= F(M, x_\ep)-F(N, y_\ep^*)\\
&\leq F(M, y_\ep^*) +\kappa |x_\ep-y_\ep^*| (||M||+1)- F(N, y_\ep^*).
\end{align*}
We use our assumption $M\leq N$ and the uniform ellipticity of $F$ to bound the right-hand side of the previous line from above and obtain,
\begin{equation}
\label{FMFN}
F(M, x_\ep)-F_\ep(N,y_\ep)
\leq \kappa |x_\ep-y_\ep^*| (||M||+1) -\lambda||N-M||.
\end{equation}
We will now estimate $|x_\ep-y_\ep^*|$. 
According to the triangle inequality, Lemma \ref{doubling variables property} and the estimate (\ref{yepyepstar}), we have
\begin{equation}
\label{xepyep}
|x_\ep - y_\ep^*|\leq |x_\ep-y_\ep| + |y_\ep-y_\ep^*|\leq 2\ep\linfty{Du}{U} +\ep= \ep(2\linfty{Du}{U}+1).
\end{equation}
We set $s$ and $t$ to be,
\[
s=t= \ep \kappa (2\linfty{Du}{U}+1).
\]
Together with the estimate (\ref{xepyep}), this implies,
\[
|x_\ep - y_\ep^*|\leq t\kappa^{-1}=s\kappa^{-1}.
\]
We use the previous line to bound from above the first term on the right-hand side of (\ref{FMFN}) and obtain,
\[
F(M, x_\ep)-F_\ep(N, y_\ep) \leq t||M||+s-\lambda||N-M||.
\]
And, by our choice of $\ep\leq \ep_0$, we have that $t$ satisfies (\ref{assume t}). Thus, the hypotheses (\ref{assume eqn}) and (\ref{assume t}) of Lemma \ref{lem compare} are satisfied, and  we obtain that (\ref{conclusion lem compare}) holds. In our situation, (\ref{conclusion lem compare}) reads,
\begin{equation}
\label{ep consequence}
f(x_\ep)-(f^\ep(y_\ep)-\theta)-\ep \kappa (2\linfty{Du}{U}+1)\leq \frac{t^2 C^2(n) \ep^{-1}}{2\lambda} =\frac{\ep \kappa^2(2\linfty{Du}{U}+1)^2}{2\lambda},
\end{equation}
where the equality follows by our choice of $t$.  
Next we will use that $f$ is Lipschitz to  bound the left-hand side of (\ref{ep consequence}) from below. We denote by $y_\ep^+$  a point where the supremum is achieved in the definition of $f^\ep(y_\ep)$. We have,
\begin{equation}
\label{yepplus}
|y_\ep^+-y_\ep|\leq \ep.
\end{equation}
According to the triangle inequality, the estimate on $ |x_\ep-y_\ep| $ of Lemma \ref{doubling variables property}, and (\ref{yepplus}), we have 
\begin{equation}
\label{est xep yep}
|x_\ep - y_\ep^+|\leq |x_\ep-y_\ep| + |y_\ep-y_\ep^+|\leq 2\ep\linfty{Du}{U} +\ep= \ep(2\linfty{Du}{U}+1).
\end{equation}
Since $f$ is Lipschitz, we find
\begin{equation}
\label{fxepfyep}
f(x_\ep)-f^\ep(y_\ep)=f(x_\ep)-f(y_\ep^+) \geq -\linfty{Df}{U}|x_\ep-y_\ep^+|\geq -\ep \linfty{Df}{U} (2\linfty{Du}{U}+1),
\end{equation}
where the second inequality follows from (\ref{est xep yep}). 
We use the previous line to estimate the left-hand side of (\ref{ep consequence}) from below and find,
\[
 - \ep \linfty{Df}{U} (2\linfty{Du}{U}+1))+\theta-\ep \kappa (2\linfty{Du}{U}+1)\leq \frac{\ep \kappa^2(2\linfty{Du}{U}+1)^2}{2\lambda},
\]
which, upon rearranging the left-hand side becomes,
\[
\theta - (\ep \linfty{Df}{U}+\kappa) (2\linfty{Du}{U}+1)\leq  \frac{\ep \kappa^2(2\linfty{Du}{U}+1)^2}{2\lambda}.
\]
According to the definition of the parameter $\theta$, the left-hand side of the previous line is simply $\theta_0$. Hence we obtain,
\[
\theta_0\leq \frac{\ep \kappa^2(2\linfty{Du}{U}+1)^2}{2\lambda},
\]
which contradicts our choice of $\theta_0$. Therefore (\ref{bd u-uep}) holds. We use Proposition (\ref{c one alpha prop}) to bound the right-hand side of (\ref{bd u-uep}) from above, and find, for some constant $C$ that depends on $n, \lambda, \kappa$,  $\conegamma{g}{U}$, $\diam U$ and the regularity of $\bdry U$,
\begin{equation}
\label{u tu}
\sup_U (u-\tu_\ep) \leq C\ep.
\end{equation}
Let us now ``replace" $\tu_\ep$ by $u_\ep$. We will emply Lemma \ref{barrier} with $c=\theta$. We find,  
\begin{equation}
\label{effect of theta}
\sup_U (\tu_\ep-u_\ep) \leq \frac{(\diam U)^2\theta}{2\lambda}.
\end{equation}
From (\ref{u tu}), (\ref{effect of theta}), and the definitions of $\theta$ and $\theta_0$, we conclude 
\[
\sup_U (u-u_\ep) \leq \sup_U(u-\tu_\ep)+\sup_U(\tu_\ep-u_\ep) \leq \sup_U (u - \tu_\ep) +\frac{(\diam U)^2\theta}{2\lambda} \leq \ep C,
\]
where $C$ depends on $n, \lambda, \kappa$, $\linfty{Df}{U}$, $\conegamma{g}{U}$, $\diam U$ and the regularity of $\bdry U$.
\end{proof}

\section{An elementary lemma}
\label{elem lemm sec}
In this section we establish an elementary lemma that plays an important role in the proof of our main result.
 \begin{lem}
\label{elem lem}
Assume $w\in C^{0,\eta}(U)$ is such that $w\leq 0$ on $\bdry U$ and $\sup_U w>0$.  Then for any positive $m$  with $m\leq \sup_U w$, there exists $x_0\in U$ with 
\begin{equation}
\label{where x0 is}
d(x_0, \bdry U) \geq \left(\frac{m}{2\etasemi{w}{U}}\right)^{1/\eta}
\end{equation}
 and an affine function $l(x)$ with $l(x_0)=w(x_0)$ and
\begin{equation}
\label{elemlem conc}
w(x)\leq l(x)-\frac{m}{2(\diam U)^2}|x-x_0|^2
\end{equation}
for all $x\in U$.
\end{lem}

\begin{proof}
Let $w\in C^{0,\eta}(U)$ be as in the statement of the lemma, and let us take $m$ with
\begin{equation*}
m\leq \sup_U w.
\end{equation*}
Let us use $R$ to  denote $R=\diam U$. We fix some $y\in U$. The function
\begin{equation}
\label{map elem}
x\mapsto -\frac{m}{2R^2}|x-y|^2 -w(x)
\end{equation}
achieves its minimum on $\bar{U}$ at some $x_0\in \bar{U}$. (We point out that $x_0$ is a point where $w$ is touched from \emph{above} by a \emph{concave} paraboloid of opening $\frac{m}{2R^2}$.) Thus, for all $x\in U$, we have
\begin{align*}
w(x_0)&\geq w(x) + \frac{m}{2R^2}|x-y|^2-\frac{m}{2R^2}|x_0-y|^2\\
&\geq w(x) -\frac{m}{2R^2}|x_0-y|^2.
\end{align*}
We take the supremum over $x\in U$ of both sides and obtain,
\[
w(x_0)\geq \sup_{x\in U} w(x) - \frac{m}{2R^2}|x_0-y|^2 .
\]
The definition of $R$ implies $|x_0-y|^2\leq R^2$. We use this, and that we have $m\leq \sup_U w$, to estimate the right-hand side of the previous line from below. We obtain,
\begin{equation}
\label{wxo}
w(x_0) \geq m - \frac{m}{2}=\frac{m}{2}.
\end{equation}
Since $w\in C^{0,\eta}(U)$ and $w\leq 0$ on $\bdry U$,  we obtain an upper bound on $w(x_0)$ in terms of the distance between $x_0$ and the boundary of $U$:
\[
w(x_0) \leq d(x_0, \bdry U)^\eta \etasemi{w}{U}.
\]
We use this to bound the left-hand side of (\ref{wxo}) from above and obtain,
\[
 d(x_0, \bdry U)^\eta  \etasemi{w}{U}\geq \frac{m}{2},
 \]
and so the estimate (\ref{where x0 is}) follows.

Let us now establish (\ref{elemlem conc}). Since $x_0$ is a minimum of the map (\ref{map elem}), we have that for all $x\in U$,
\begin{align*}
w(x)&\leq w(x_0) + \frac{m}{2R^2}\left(|x_0-y|^2-|x-y|^2\right).
\end{align*}
We have $|x-y|^2=|x-x_0|^2+ 2\langle x-x_0,x_0-y\rangle + |x_0-y|^2$, so we find,
\begin{align*}
w(x)&\leq w(x_0) +\frac{m}{2R^2}\left(-|x_0-x|^2-2\langle x-x_0,x_0-y\rangle\right),
\end{align*}
which yields (\ref{elemlem conc}) upon setting $l(x)=w(x_0) -\frac{m}{R^2}\langle x-x_0,x_0-y\rangle$.
\end{proof}

\section{A key estimate between solutions and sufficiently regular $\delta$-solutions}
\label{main lem sec}
In this section we state and prove Proposition \ref{main lem}, a key part of the proofs of the two main results, Theorems \ref{vague main thm} and \ref{vague main thm h}. 
Roughly, Proposition \ref{main lem} says that if $w$, $u$ and $v$ are, respectively, a $\delta$-subsolution, a solution and a $\delta$-supersolution  on a ball of radius $2r$ and satisfy,
\[
w\leq u\leq v
\]
on the boundary of the  ball of radius $r/2$, then we have
\[
w -\tilde{c}\delta^{\tilde{\alpha}}r^2\leq u \leq v+\tilde{c}\delta^{\tilde{\alpha}}r^2,
\]
on the interior of  the ball of radius $r/2$. The key ingredient in the proof is Proposition \ref{thm A};  the main difficulty is that we assume $w$, $u$ and $v$ satisfy \emph{different} equations. In particular, $u$ satisfies an equation with frozen coefficients. 
We introduce the  constants
\begin{align*}
&\zeta = \frac{2\sigma}{12\sigma+9n},  \   \   \theta= \frac{\zeta}{4(2-\alpha)}, \   \    \alpha_1 = \frac{\zeta}{4},
\end{align*}
where $\sigma$ is the exponent from Proposition \ref{thm A} and $\alpha$ is the exponent from  Proposition \ref{c one alpha prop}.  We remark that since $\sigma$ and $\alpha$ are universal, then so are $\zeta$, $\theta$, and $\alpha_1$.

\begin{prop}
\label{main lem}
Assume (F\ref{ellipticity}), (F\ref{F lip}), and $F(0,x)\equiv 0$. Suppose $x_0\in U$ and $C_1$ and $\nu$ are positive constants. For $\delta>0$, we define the quantities $r$ and $\ep$ by
\[
r=\delta^\theta,\text{  and  }\ep = \nu+\delta^\theta=\nu +r.
\]
There exist a universal constant   $\tilde{\alpha}$ and positive constants $\tilde{c}$, $\tilde{\delta}$ that depend on $C_1$, $n$, $\lambda$, and $\Lambda$ such that if $\delta\leq \tilde{\delta}$ and, 
\begin{enumerate}
\item   if $v$ is a $\delta$-supersolution  of
\begin{equation}
\label{eqn for v}
F_{\nu}(D^2v, x)=f^\nu(x) \text{ in } B_{2r}(x_0)
\end{equation}
and $\tu_\ep$ is a viscosity solution of 
\begin{equation}
\label{eqn for tuep}
F_\ep(D^2 \tu_\ep, x_0)=f^\ep(x) \text{ in }B_{2r}(x_0) 
\end{equation}
that satisfy 
\begin{enumerate}
\item
\label{A1}
 $\displaystyle \conealpha{\tu_\ep}{B_{r}(x_0)}^*\leq C_1$,
\item
\label{A1.5}
$\displaystyle D^2v(x) \leq \delta^{-\zeta}I$ in the sense of distributions for all $x\in B_{2r}(x_0)$, and
\item
\label{A2}
$\displaystyle \sup_{\bdry B_{r/2}(x_0)} (\tu_\ep - v)\leq 0$,
\end{enumerate}
then
\begin{equation}
\label{bound on u-v}
\sup_{B_{r/2}(x_0)} (\tu_\ep - v)\leq \tilde{c}\delta^{\tilde{\alpha}}r^2; 
\end{equation}
\item [(2)]  if $w$ is a $\delta$-subsolution of $F^{\nu}(D^2w, x)=f_\nu(x)$ in $B_{2r}(x_0)$ and $\tilde{u}^\ep$ a solution of $F^\ep(D^2 \tu^\ep, x_0)=f_\ep(x_0)$ in $B_{2r}(x_0)$ that satisfy $\conealpha{\tu^\ep}{B_{r}(x_0)}^*\leq C_1$, $D^2w(x)\geq -\delta^{-\zeta}I$ in the sense of distributions for all $x\in B_{2r}(x_0)$,
and $
\sup_{\bdry B_{r/2}(x_0)} ( w- \tu^\ep )\leq 0$, 
then, 
\[
\sup_{B_{r/2}(x_0)}( w- \tu^\ep) \leq \tilde{c}\delta^{\tilde{\alpha}}r^2.
\]
\end{enumerate}
\end{prop}

\subsection{Outline of the proof of Proposition \ref{main lem}}
We outline the proof of the first part of Proposition \ref{main lem}; the proof of the second part is very similar. The main idea is to control the supremum of $(\tu_\ep-v)$ by the size of the contact set of  $(\tu_\ep-v)$ with its concave envelope. 

We regularize $\tu_\ep$ by sup-convolution.  Then we subtract a small quadratic in order to obtain a strict subsolution of (\ref{eqn for tuep}). Let us denote the resulting function by $\bu$. The assumptions (\ref{A2}) and (\ref{A1.5}) imply that  $\bu$ and $v$ satisfy the hypotheses of Proposition \ref{pre ABP}, so we find,
\[
\sup (\bu-v)\leq Cr\left(\int_{\{\bu-v=\Gamma_{\bu-v}\}}|\det D^2\Gamma_{\bu-v}|\right)^{\frac{1}{n}}.
\] 
We proceed by contradiction and assume $\sup (\bu-v)$ is ``large". The previous estimate, together with an upper bound on $|\det D^2\Gamma_{\bu-v}|$,  thus implies that the contact set $\{\bu-v=\Gamma_{\bu-v}\}$ is ``large" as well. 

The key part of our argument is Proposition \ref{thm A}, which says that there is a set $A_t^+$ on which $\bu$ is very close to being a paraboloid. Moreover, Proposition \ref{thm A} provides a lower bound on the size of this good set $A_t^+$, which, together with the lower bound on the size of $\{\bu-v=\Gamma_{\bu-v}\}$, allows us to find a point $x$ in their intersection. We formulate this  as Lemma \ref{helper lemma} below. 

We show that if $\bu$ is touched from below by a paraboloid at a point in the contact set $\{\bu-v=\Gamma_{\bu-v}\}$, then $v$ is touched from below a paraboloid of the same opening. This allows us to use the fact that $v$ is a $\delta$-supersolution of (\ref{eqn for v}) and obtain the desired contradiction.

\subsection{An auxiliary  lemma}
Before stating Lemma \ref{helper lemma}, we introduce the function $\omega: \rr^+\rightarrow [0,1]$, defined by
\[
\omega(c) = \min \left\{ 1, \left(\frac{1}{4(1+2c)}\right)^{\frac{1}{\zeta-2\theta}}\right\}.
\]
We point out that, since $\zeta - 2\theta\geq \zeta/2>0$, we have
\begin{equation}
\label{omega property}
t\leq \omega(c) \Rightarrow 2t^{\zeta-\theta} (1+2c)\leq \frac{t^\theta}{2} .
\end{equation}
\begin{lem}
\label{helper lemma}
Under the assumptions of Proposition \ref{main lem}, there exists a universal constant $\bar{C}$ and a positive constant $c_1$ that depends on $n$, $\lambda$, $\Lambda$ and $C_1$ such that if $\mathcal{C}$ is a subset of $B_{r/2}(x_0)$ with 
\begin{equation*}
|\mathcal{C}|\geq c_1\delta^{n(\alpha_1+\zeta)}r^n,
\end{equation*}
and  $\delta\leq \omega(C_1)$, then there exists a point $x\in \mathcal{C}$ and a paraboloid $P$ such that
\begin{enumerate}
\item \label{Bdel} $B_\delta(x)\subset B_r(x_0)$,
\item \label{Px0=0} $P(x_0)=0$,
\item  \label{eq for P}
$F_\ep(D^2P, x) = f^\ep(x)$, and,
\item  \label{helper lem touch} for all $y\in B_\delta(x)$, we have,
\begin{equation}
\label{tu touched quad}
\tu^+_\ep (y) \geq \tu^+_\ep (x)+P(y) - \bar{C}\delta^{1/2}|x-y|^2.
\end{equation}
\end{enumerate}
\end{lem}

\subsection{Proof of Proposition \ref{main lem}}
We postpone the proof of Lemma \ref{helper lemma} and proceed with:
\begin{proof}[Proof of Proposition \ref{main lem}]
Throughout the proof of this proposition, we use $C$ to denote a generic constant that may change from line to line and  depends only on $\lambda$, $\Lambda$, $n$, $\kappa$, and $C_1$. 
We set
\[
\tilde{\delta}=\min \left\{\omega(C_1), \left(\frac{\Lambda}{2\lambda \bar{C}}\right)^4\right\}
\]
and take $\delta\leq \tilde{\delta}$. 
We will give the proof of part (1) of the proposition; the proof of part (2) is similar. We first regularize $\tu_\ep$ by taking sup-convolution:
\[
\tu_\ep^+(x)=\sup_y \left\{\tu_\ep(y)-\frac{|y-x|^2}{2\delta^{\zeta}}\right\}.
\]
Next we perturb $\tu_\ep^+$ by a small quadratic:
\[
\bu (x)= \tu^+_\ep(x) - \delta^{1/4}\left(\frac{r^2}{4} - |x-x_0|^2\right).
\]
Assumption (\ref{A1}) of this lemma implies $\linfty{\tu^\ep}{B_r(x_0)}\leq C_1$, so, by the properties of sup-convolution (item (\ref{item dist conv}) of Proposition \ref{prop conv}), we have
\[
\sup_{\bdry B_{r/2}(x_0)} (\tu_\ep^+ - v) \leq \sup_{\bdry B_{r/2}(x_0)} (\tu_\ep - v) +2\delta^{\zeta/2}\linfty{\tu_\ep}{B_r(x_0)}^{1/2} \leq 2\delta^{\zeta/2}  C_1^{1/2}.
\]
Since we have $\bu=\tu^+_\ep$ on $ \bdry B_{r/2}(x_0)$, we find
\[
\sup_{\bdry B_{r/2}(x_0)} (\bu - v) = \sup_{\bdry B_{r/2}(x_0)} (\tu_\ep^+ - v) \leq  2C_1^{1/2}\delta^{\zeta/2}.
\]
 Proposition \ref{prop conv} and the assumption (\ref{A1.5}) imply $D^2(\bu-v)(x)\geq -2\delta^{-\zeta}I$ in the sense of distributions for all $x\in B_r(x_0)$. Therefore, $ ( \bu - v - 2C_1^{1/2}\delta^{\zeta/2})$ satisfies the assumptions of  Proposition \ref{pre ABP}. We thus have
\begin{equation}
\label{est bar u v}
\sup_{B_{r/2}(x_0)} ( \bu - v - 2C_1^{1/2}\delta^{\zeta/2}) \leq Cr\left(\int_{\mathcal{C}}\det D^2\bar{\Gamma} \right)^{1/n},
\end{equation}
where $C$ is a universal constant, $\bar{\Gamma}$ is the concave envelope of $(\bu - v - 2C_1^{1/2}\delta^{\zeta/2})^+$ on $B_r(x_0)$, and $\mathcal{C}$ is the contact set of $(\bu - v - 2C_1^{1/2}\delta^{\zeta/2})^+$ with $\bar{\Gamma}(x)$:
\[
\mathcal{C} = \{x \in B_{r/2}(x_0): \   \bar{\Gamma}(x) =\bu(x) - v(x) - 2C_1^{1/2}\delta^{\zeta/2} \}
\]
 If $x\in \mathcal{C}$, then we have, for all $y\in B_r(x_0)$,
\[
\bar{\Gamma}(y)\geq \bu(y) - v(y) - 2C_1^{1/2}\delta^{\zeta/2}(y), 
\]
with equality holding at $x$. Since $D^2(\bu-v)(x)\geq -2\delta^{-\zeta}I$ in the sense of distributions for all $x\in B_r(x_0)$, we therefore find that 
\[
D^2\bar{\Gamma}(x)\geq -2\delta^{\zeta}I
\]
for all points $x\in \mathcal{C}$.  Moreover, $\bar{\Gamma}$ concave and, according to Proposition \ref{pre ABP}, is twice differentiable almost everywhere on $B_r(x_0)$. Therefore,  we obtain $|\det D^2\bar{\Gamma}(x)|\leq 2\delta^{-n\zeta}$ for almost every $x\in \mathcal{C}$. We use this to bound the right-hand side of (\ref{est bar u v}) from above and find,
\begin{equation}
\label{bd sup below}
\sup_{B_{r/2}(x_0)} (\bu - v - 2C_1^{1/2}\delta^{\zeta/2} )\leq Cr\left(\int_{\mathcal{C} }\delta^{-n\zeta}\right)^{1/n}.
\end{equation}
Let  $c_1$ be the constant given by Lemma \ref{helper lemma}. We proceed by  contradiction  and assume
\begin{equation}
\label{cont assump}
\sup_{B_{r/2}(x_0)} (\bu - v - 2C_1^{1/2}\delta^{\zeta/2} ) > c_1^{1/n} \delta^{\alpha_1}r^2.
\end{equation}
We use  (\ref{cont assump})  to estimate the left-hand side of  (\ref{bd sup below}) from below. We find,
\[
c_1^{1/n} \delta^{\alpha_1}r^2< Cr\left(\int_{\mathcal{C} }\delta^{-n\zeta}\right)^{1/n} \leq Cr|\mathcal{C}|^{1/n}\delta^{1/n}.
\]
Rearranging, we obtain,
\begin{equation}
\label{bound Gamma}
|\mathcal{C} |\geq c_1 \delta^{n(\alpha_1+\zeta)}r^n.
\end{equation}
By Lemma \ref{helper lemma}, there exists a point  $x\in \mathcal{C}$ with
\begin{equation}
 \label{Bdeluse} B_\delta(x)\subset B_r(x_0),
 \end{equation}
  and a paraboloid $P$ with $P(x_0)=0$ that satisfies
\begin{equation}\label{eq for Puse}
F_\ep(D^2P, x) = f^\ep(x),
\end{equation}
 and such that, for all $y\in B_\delta(x)$ we have,
\begin{equation}
\label{tu touched quad use}
\tu^+_\ep (y) \geq \tu^+_\ep (x)+P(y) - \bar{C}\delta^{1/2}|x-y|^2.
\end{equation}
Since $x$ is contained in the contact set $\mathcal{C}$, there exists an affine function $l(y)$ such that for all $y\in B_{r/2}(x_0)$,
\[
\bu(y) - v(y) \leq l(y) \text{ with equality holding at } x.
\]
Rearranging the previous inequality and using the definition of $\bu$ in terms of $\tu^+_\ep$ yields, for all $y\in B_{r/2}(x_0)$, 
\[
v(y)\geq \bu(y)-l(y) = \tu^+_\ep(y) - \delta^{1/4}\left(\frac{r^2}{4}-|y-x_0|^2\right)-l(y) \text{ with equality holding at }x.
\]
Since, according to (\ref{Bdeluse}) we have  $B_\delta(x)\subset B_r(x_0)$, we may use (\ref{tu touched quad use}) to bound the first term on the right-hand side of the previous line from below and find, for all $y\in  B_\delta(x)$,
\[
v(y)\geq  \tu^+_\ep (x)+P(y) - \bar{C}\delta^{1/2}|x-y|^2 - \delta^{1/4}\left(\frac{r^2}{4}-|y-x_0|^2\right)-l(y),
\]
with equality holding at $x$.  Since $v$ is a $\delta$-supersolution of (\ref{eqn for v}) on $B_{2r}(x_0)$, and the previous inequality holds for all $y\in  B_\delta(x)\subset B_r(x_0)$, we obtain,
\[
F_\nu(D^2P +\left(2\delta^{1/4}- 2\bar{C}\delta^{1/2}\right)I, x)\leq f^\nu(x).
\]
Let $x^*$ be a point at which the infimum is achieved in the definition of $F_\nu(D^2P +\left(\delta^{1/4} - Ct\delta\rho^{-1}\right)I, x)$ and let $x^+$ be a point at which the supremum is achieved in the definition of $f^\nu(x)$.
Then 
\begin{equation}
\label{xx*}
|x-x^*|\leq \nu, \    \    |x-x^+|\leq \nu,
\end{equation}
and
\[
F(D^2P +\left(2\delta^{1/4}- 2\bar{C}\delta^{1/2}\right)I, x^*) \leq f(x^+).
\]
Since $F$ is uniformly elliptic, the above implies 
\begin{equation}
\label{strict ineq}
F(D^2P , x^*) - 2\lambda \bar{C}\delta^{1/2} \leq f(x^+)-2\Lambda \delta^{1/4}.
\end{equation}
We subtract (\ref{strict ineq}) from (\ref{eq for Puse}) to obtain
\begin{equation}
\label{ineq 2}
F_\ep(D^2P, x_0) - F(D^2P , x^*) +2\lambda \bar{C}\delta^{1/2} \geq f^\ep(x)-f(x^+)+2\Lambda\delta^{1/4}. 
\end{equation}
By (\ref{xx*}) and our choice of $\ep$, we have
\[
|x^*-x_0| \leq |x^*-x|+|x-x_0| \leq\nu+r = \ep.
\]
Together with the definition of $F_\ep$ as the infimum of $F$ over balls of size $\ep$, the previous line implies, 
\[
F_\ep(D^2P, x_0) \leq F(D^2P , x^*).
\]
 Similarly, we have $|x^+-x_0|\leq \ep$, and so $f^\ep(x)-f(x^+)\geq 0$. Therefore, (\ref{ineq 2}) becomes
\[
2\Lambda \delta^{1/4}\leq 2\lambda \bar{C} \delta^{1/2}.
\]
Rearranging the previous line, we find,
\[
\delta^{1/4}\geq \frac{\Lambda}{\lambda \bar{C}}.
\]
But this contradicts our choice of  $\delta$. Therefore, (\ref{cont assump}) cannot hold, so we obtain,
\[
\sup_{B_{r/2}(x_0)} (\bu - v - 2C_1^{1/2}\delta^{\zeta/2}) \leq c_1^{1/n} \delta^{\alpha_1}r^2,
\]
which, upon rearranging becomes,
\[
\sup_{B_{r/2}(x_0)} (\bu - v ) \leq 2C_1^{1/2}\delta^{\zeta/2} +c_1^{1/n} \delta^{\alpha_1}r^2.
\]
Our choices of $r$ and $\alpha_1$ are such that $\alpha_1+2\theta \geq \zeta/2$, so that $\delta^{\zeta/2}\leq \delta^{\alpha_1+2\theta}= \delta^{\alpha_1}r^2$. Thus we obtain,
\begin{equation}
\label{almost there bd}
\sup_{B_{r/2}(x_0)} ( \bu - v )\leq (2C_1^{1/2}+ c_1^{1/n})\delta^{\alpha_1}r^2.
\end{equation}
In addition, according to the definition of $\bu$ in terms of $\tu_\ep^+$, we have,
\[
\tu_\ep^+(x)\leq \bu(x) + \delta^{\frac{1}{4}}r^2.
\]
Because $\tu_\ep^+$ is the sup-convolution of $\tu_\ep$,  we have $\tu_\ep\leq \tu_\ep^+$.  We use this, together with the previous line, and find, for all $x\in B_{r/2}(x_0)$,
\[
\tu_\ep(x)\leq\tu_\ep^+(x)\leq \bu(x) + \delta^{\frac{1}{4}}r^2.
\]
Together with the bound (\ref{almost there bd}) this implies:
\[
\sup_{B_{r/2}(x_0)} (\tu_\ep - v)\leq  \sup_{B_{r/2}(x_0)} (\bu + \delta^{\frac{1}{4}}r^2- v) \leq (2C_1+ c_1^{1/n})\delta^{\alpha_1}r^2 + \delta^{\frac{1}{4}}r^2.
\]
We take $\tilde{\alpha} = \min\left\{\alpha_1, 1/4\right\}$ and $\tilde{c} = 2C_1+ c_1^{1/n}+1$ to conclude.
\end{proof}

\subsection{Proof of  auxiliary lemma}
Lemma \ref{helper lemma} follows from Proposition \ref{thm A} by a covering argument, which we now describe. We cover $B_{r/2}(x_0)$ by balls $B^{\delta^{\zeta}}_{\rho/2}(x_i)$ (we are using the notation of Proposition \ref{thm A}), with the parameter $\rho$ properly chosen. By Proposition \ref{thm A}, $\tilde{u}_\ep$ has second order expansions with controlled error on large portions of each of the $B^{\delta^{\zeta}}_{\rho/2}(x_i)$. We  refer to such points as being in the ``good set" of $\tilde{u}_\ep$. We will use the lower bound on $|\mathcal{C}|$ to show that there is a point $x$ that is both in the good set of $\tilde{u}_\ep$ and in $\mathcal{C}$. 

\begin{proof}[Proof of Lemma \ref{helper lemma}]
Let us take $\mathcal{C}$ to be a subset of $B_{r/2}(x_0)$ that satisfies
\begin{equation}
\label{Clowbd}
|\mathcal{C}|\geq c_1\delta^{n(\alpha_1+\zeta)}r^n,
\end{equation}
where $c_1$ is specified in (\ref{chose c1}). Let us define the parameter $\rho$ as,
\[
\rho = 2\delta^{\zeta}(1+ 2\linfty{D\tu_\ep}{B_r(x_0)}).
\]
We recall some notation of Proposition \ref{thm A}: we will be using, for $x_i\in B_r(x_0)$, the set  $B^{\delta^{\zeta}}_{\rho}(x_i)$ given by
\[
B^{\delta^{\zeta}}_{\rho}(x_i) = B(x_i, \rho-2\delta^\zeta\linfty{D\tilde{u}_\ep}{B_\rho(x_i)}).
\]
We point out that $\rho$ depends on the  norm of $D\tu_\ep$ in $B_r(x_0)$, while the norm of  $D\tu_\ep$ in $B_\rho(x_i)$ appears in the definition of $B^{\delta^{\zeta}}_{\rho}(x_i)$.

\textbf{Step one.}  
Before proceeding with the proof, we establish several important relationships between the various sets we are using in this proof. We claim
\begin{equation}
\label{inside Brho}
\text{ if }x_i\in B_{r/2}(x_0), \text{ then }B(x_i, \delta^\zeta)\subset B_{\rho/2}^{\delta^\zeta}(x_i).
\end{equation}
We now proceed with verifying (\ref{inside Brho}).
By assumption (\ref{A1}), we have $\linfty{D\tu_\ep}{B_r(x_0)}\leq C_1r^{-1}$. We use this bound in the definition of $\rho$ to find 
\[
\rho  \leq 2\delta^\zeta(1+2C_1r^{-1})\leq 2\delta^\zeta r^{-1}(1+2C_1)=2\delta^{\zeta-2\theta}(1+2C_1).
\]
The second inequality follows since $0\leq r\leq 1$, and the equality holds by  our definition of $r$ as $r=\delta^\theta$. 
Since we assumed $\delta\leq \omega(C_1)$, the property (\ref{omega property}) of $\omega$ implies,
\[
2\delta^{\zeta-2\theta}(1+2C_1) \leq \frac{\delta^\theta}{2} = \frac{r}{2}.
\]
Thus we have 
\begin{equation}
\label{rho and r}
\rho\leq r/2.
\end{equation} 
Therefore, for any $x_i$, we have $B_{\rho}(x_i) \subset B_{r/2}(x_i)$. And, if $x_i\in B_{r/2}(x_0)$, we find that the inclusion $B_{r/2}(x_i)  \subset B_r(x_0)$ holds.  We summarize this as:
\begin{equation}
\label{upper on Brho}
B_{\rho}(x_i) \subset B_{r/2}(x_i)  \subset B_r(x_0).
\end{equation}
From the previous line we deduce,
\begin{equation}
\label{deriv balls}
\linfty{D\tu_\ep}{B_\rho(x_i)} \leq  \linfty{D\tu_\ep}{B_r(x_0)},
\end{equation}
which allows us to estimate the radius of $B_{\rho/2}^{\delta^\zeta}(x_i)$ from below:
\[
\frac{\rho}{2} - 2\delta^\zeta\linfty{D\tilde{u}_\ep}{B_\rho(x_i)} \geq \frac{\rho}{2} - 2\delta^\zeta\linfty{D\tilde{u}_\ep}{B_{r}(x_0)} =\delta^\zeta.
\]
Since the left-hand side of the previous line is exactly the radius of $B_{\rho/2}^{\delta^\zeta}(x_i)$, we find that (\ref{inside Brho}) holds.
Let us use (\ref{deriv balls}) one more time to obtain,
\[
\rho - 2\delta^\zeta\linfty{D\tilde{u}_\ep}{B_\rho(x_i)} \geq \rho - 2\delta^\zeta\linfty{D\tilde{u}_\ep}{B_{r}(x_0)} =2\delta^\zeta(1+\linfty{D\tilde{u}_\ep}{B_{r}(x)}). 
\]
We recognize the left-hand side of the previous line as exactly the radius of $B_{\rho}^{\delta^\zeta}(x_i)$. Therefore, we have
\begin{equation}
\label{Brho contains}
B(x_i, 2\delta^\zeta(1+\linfty{D\tilde{u}_\ep}{B_{r}(x_0)}))\subset B_{\rho}^{\delta^\zeta}(x_i).
\end{equation}
In addition, we claim 
\begin{equation}
\label{Bdelrhos}
\text{if $x_i\in B_{r/2}(x_0)$ and } x\in B_{\rho/2}^{\delta^\zeta}(x_i), \text{ then }B_\delta(x)\subset  B_{\rho}^{\delta^\zeta}(x_i)\subset B_{r}(x_0).
\end{equation}
To establish (\ref{Bdelrhos}), we take $y\in B_\delta(x)$. By the triangle inequality, we have,
\[
|x_i-y|\leq |x_i-x|+|x-y|.
\]
The radius of $B^{\delta^{\zeta}}_{\rho/2}(x_i)$ is less that $\rho/2= \delta^{\zeta}(1+ 2\linfty{D\tu_\ep}{B_r(x_0)})$. Since $x$ is contained in  $B_{\rho/2}^{\delta^\zeta}(x_i)$, we use this to bound from above the first term on the right-hand side of the previous line. Since $y\in B_\delta(x)$, the second term is bounded simply by $\delta$. Therefore we obtain,
\[
|x_i-y|\leq \delta^{\zeta}(1+ 2\linfty{D\tu_\ep}{B_r(x_0)}) +\delta.
\]
Since we have $\delta\leq \delta^\zeta$, we find,
\[
|x_i-y|\leq \delta^{\zeta}(2+ 2\linfty{D\tu_\ep}{B_r(x_0)}).
\]
Therefore, $y\in B(x_i, \delta^{\zeta}(2+ 2\linfty{D\tu_\ep}{B_r(x_0)})$. Together with (\ref{Brho contains}), this implies  $y\in B_{\rho}^{\delta^\zeta}(x_i)$. Since this holds for all $y\in B_\delta(x)$, we have established 
\[
B_\delta(x)\subset  B_{\rho}^{\delta^\zeta}(x_i).
\]
 Since we have $B_{\rho}^{\delta^\zeta}(x_i)\subset B_\rho(x_i)$ and (\ref{upper on Brho}) says $B_\rho(x_i)\subset B_r(x_0)$, we find
 \[
  B_{\rho}^{\delta^\zeta}(x_i)\subset B_r(x_0).
 \]
  Thus (\ref{Bdelrhos}) holds.

\textbf{Step two.} 
The collection $\left\{B_{\rho/2}^{\delta^\zeta}(x_i) : \  x_i \in B_{r/2}(x_0)\right\}$ covers $B_{r/2}(x_0)$; we seek to extract a finite subcover. Although the radius of each ball $B_{\rho/2}^{\delta^\zeta}(x_i)$ depends on $x_i$, the estimate (\ref{inside Brho}) provides a lower bound on these radii that is uniform in $x_i$. 
Therefore,  there exists a finite collection $\left\{B^{\delta^\zeta}_{\rho/2}(x_i): \  x_1, ..., x_L\in B_{r/2}(x_0)\right\}$ that covers $B_{r/2}(x_0)$, where $L=Cr^n\delta^{-n\zeta}$, and $C$ is a universal constant. 
There must be one ball $B^{\delta^\zeta}_{\rho/2}(x_I)$ with
\[
|\mathcal{C} \cap B^{\delta^\zeta}_{\rho/2}(x_I)|\geq \frac{|\mathcal{C}|}{L}.
\]
 We use the lower bound (\ref{Clowbd}) on the size of $\mathcal{C}$ to estimate the right-hand side of the previous line from below. We find,
\begin{equation}
\label{lbC}
|\mathcal{C} \cap B^{\delta^\zeta}_{\rho/2}(x_I)| 
 \geq \frac{c_1 \delta^{n(\alpha_1+\zeta)}r^n}{Cr^n\delta^{-n\zeta}}=C_2c_1\delta^{n\alpha_1+2n\zeta} .
\end{equation}
Here $C_2$ is universal.

According to (\ref{Bdelrhos}), we have,
\[
B^{\delta^\zeta}_{\rho}(x_I)\subset B_r(x_0),
\]
and so $\tu_\ep^+$ is satisfies the equation (\ref{eqn for tuep}) in $B^{\delta^\zeta}_{\rho}(x_I)$. Therefore, $\tu_\ep^+$ satisfies the hypotheses of Proposition \ref{thm A} in $B^{\delta^\zeta}_{\rho}(x_I)$ with  $F_\ep$ instead of $F$ and with constant right-hand side. Thus, for all $t\geq t_0$, where $t_0$ is a universal constant, there exist sets $A^+_t$ (the ``good sets" of  $\tilde{u}_\ep$) that satisfy
\begin{equation}
\label{size bad}
|B^{\delta^{\zeta}}_{\rho/2}(x_I) \setminus A^{+}_t| \leq C_3\rho^{n-\sigma}\linfty{D\tu_\ep}{B_\rho(x_I)}^\sigma t^{-\sigma} ,
\end{equation}
where $C_3$ and $\sigma$ are universal. Before proceeding, we will bound the right-hand side of the previous line from above. According to (\ref{deriv balls}), we have $\linfty{D\tu_\ep}{B_\rho(x_I)}\leq \linfty{D\tu_\ep}{B_r(x_0)}$. Hence,  assumption (\ref{A1}) implies  
\[
\linfty{D\tu_\ep}{B_\rho(x_I)}\leq C_1r^{-1}.
\]
The estimate (\ref{rho and r}) implies  $r^{-1}\leq \rho^{-1}/2$. We use this, together with the previous line, to bound the right-hand side of (\ref{size bad}) from above and obtain,
\begin{equation}
\label{size bad use}
|B^{\delta^{\zeta}}_{\rho/2}(x_I) \setminus A^{+}_t| \leq C_4\rho^{n-2\sigma} t^{-\sigma} ,
\end{equation}
where $C_4$ depends on $n$, $\lambda$, $\Lambda$  and $C_1$. 
We take $t$ to be, 
\begin{align*}
t=\left(\frac{2C_4\rho^{n-2\sigma}}{C_2c_1\delta^{n\alpha_1+2n\zeta }}\right)^{\frac{1}{\sigma}} +t_0.
\end{align*}
We use this choice of $t$ to bound the right-hand side of (\ref{size bad use}) from above and obtain,
\[
|B^{\delta^{\zeta}}_{\rho/2}(x_I) \setminus A^{+}_t| < \frac{1}{2}C_2c_1\delta^{n\alpha_1+2n\zeta } \leq \frac{1}{2} |\mathcal{C} \cap B^{\delta^\zeta}_{\rho/2}(x_I)|,
\]
where the second inequality holds by the  bound (\ref{lbC}) that we have just established. 
Therefore, there exists a point $x\in \mathcal{C} \cap  B^{\delta^\zeta}_{\rho/2}(x_I) \cap A_t^+$. Together with (\ref{Bdelrhos}), this implies $B_\delta(x)\subset B_r(x_0)$, so we have that the desired item (\ref{Bdel}) holds. The definition of $A_t^+$ implies that there exists a paraboloid $P$ such that items (\ref{Px0=0}) and (\ref{eq for P}) hold, and,  for all $y\in B^{\delta^{\zeta}}_\rho(x_I)$, we have
\begin{equation*}
\tu^+_\ep (y) \geq \tu^+_\ep (x)+P(y) - Ct\rho^{-1}|x-y|^3,
\end{equation*}
where $C$ is a universal constant. 
Since, according to (\ref{Bdelrhos}), we have $B_\delta(x)\subset B^{\delta^{\zeta}}_\rho(x_I)$, we have that the previous line holds for all $y\in B_\delta(x)$. In addition, for $y\in B_\delta(x)$ we have $|x-y|^3\leq \delta |x-y|^2$. We use this to bound the last term on the right-hand side of the previous line from below and obtain,
\begin{equation*}
\tu^+_\ep (y) \geq \tu^+_\ep (x)+P(y) - Ct\rho^{-1}\delta |x-y|^2 \text{ for all }y\in B_\delta(x).
\end{equation*}
\textbf{Step three.} 
Thus, to establish item (\ref{helper lem touch}) and complete the proof of the lemma, it will suffice to show that there exists a universal constant $\bar{C}$ with
\begin{equation}
\label{bound error}
Ct\rho^{-1}\delta\leq \bar{C} \delta^{1/2}.
\end{equation}
To this end, we first simplify the expression for $t$: we have,
\[
t=\left(\frac{2C_4}{C_2c_1}\right)^{1/\sigma} \rho^{\frac{n}{\sigma}-2}\delta^{-\frac{n}{\sigma}\alpha_1 -\frac{2n}{\sigma}\zeta} +t_0.
\]
We now chose $c_1$ as,
\begin{equation}
\label{chose c1}
c_1=\left(\frac{2C_4}{C_2}\right), 
\end{equation}
so that we have
\[
t=\rho^{\frac{n}{\sigma}-2}\delta^{-\frac{n}{\sigma}\alpha_1 -\frac{2n}{\sigma}\zeta} +t_0.
\]
Multiplying both sides of the previous line by $\rho^{-1}\delta$ we find,
\[
t\rho^{-1}\delta = \rho^{\frac{n}{\sigma}-3}\delta^{1-\frac{n}{\sigma}\alpha_1 -\frac{2n}{\sigma}\zeta} +t_0\delta\rho^{-1}.
\]
According to (\ref{rho and r}), we have $\rho\leq \frac{r}{2}$. We also know $r\leq 1$, so we find  $\rho^{\frac{n}{\sigma}}\leq 1$. We use this to bound the right-hand side of the previous line from above and find,
\[
t\rho^{-1}\delta \leq \rho^{-3}\delta^{1-\frac{n}{\sigma}\alpha_1 -\frac{2n}{\sigma}\zeta}+t_0\delta\rho^{-1}.
\]
By the definition of $\rho$, we have $\rho\geq \delta^\zeta$. We use this to bound the right-hand  of the previous line from above and obtain,
\[
t\rho^{-1}\delta \leq \delta^{1-\frac{n}{\sigma}(\alpha_1 + 2\zeta)-3\zeta} +t_0\delta^{1-\zeta}.
\]
Our choice of universal constants $\zeta$ and $\alpha_1$ is exactly such that each of the exponents of $\delta$ in the previous line is at least $1/2$. Therefore we find,
\[
t\rho^{-1}\delta \leq \left(1+t_0\right) \delta^{1/2}.
\]
We conclude that (\ref{bound error}) holds with $\bar{C}= C(1+t_0)$. Since both $\bar{C}$ and $t_0$ are universal constants,  the proof of this lemma is complete.
\end{proof}

\section{Proof of Theorem \ref{vague main thm}}
\label{sec pf of main thm}
Here is the precise statement of our main result:
\begin{thm}\
\label{main thm}
Assume  (U\ref{assum U}), (F\ref{ellipticity}), (F\ref{F lip}), (G\ref{f assumption}), (G\ref{g c one alpha}) and $F(0,x)\equiv 0$. Assume $u$ is a viscosity solution of (\ref{eqn for u}) and assume that $\left\{v_\delta\right\}_{\delta\geq 0}$ is a family of $\delta$-supersolutions (respectively, $\delta$-subsolutions) of (\ref{eqn for u}) with, 
\[
\etanorm{v_\delta}{U}\leq  M,
\]
 and, 
 \begin{equation}
 \label{main thm asump}
 \sup_{\bdry U} (u-v_\delta) \leq 0 \text{ (respectively, } \inf_{\bdry U} (u-v_\delta) \geq 0\text{)},
 \end{equation}
 for all $\delta$. 
There exists a constant $\bar{\delta}>0$ such that  for any $\delta\leq \bar{\delta}$,
\begin{equation}
\label{main thm conc}
\sup_U (u-v_\delta)\leq \bar{c}\delta^{\bar{\alpha}} \text{ (respectively, } \inf_U (u-v_\delta) \geq  -\bar{c}\delta^{\bar{\alpha}} \text{)}.
\end{equation}
The constant $\bar{\alpha}$ depends on $\eta$, $n$, $\lambda$ and $\Lambda$;  and $\bar{c}$ and $\bar{\delta}$ depend on  $n$, $\lambda$, $\Lambda$, $\kappa$, $M$,  $\czeroone{f}{U}$, $\conegamma{g}{\bdry U}$, $\diam U$ and the regularity of $\bdry U$.
\end{thm}

Throughout the remainder of this section, we will use $C$ and $C_i$ with $i=1,2,..$ to denote generic constants that may depend on $n$, $\lambda$, $\Lambda$, $\kappa$, $M$, $\czeroone{f}{U}$, $\conegamma{g}{\bdry U}$, $\diam U$ and the regularity of $\bdry U$. In addition, $C$ may change from line to line. 
We will give the proof of the case that $v_\delta$ is a $\delta$-supersolution; the other case is similar. 

The first step is to ``replace" $u$ by $u_\ep$ and $v_\delta$ by its inf-convolution. We formulate this as the following lemma:
\begin{lem}
\label{lem main thm}
In addition to the assumptions of Theorem \ref{main thm}, let us take $\ep<\ep_0$, where $\ep_0$ is the constant from Proposition \ref{compare u and uep}, and  let $u_\ep$ be the viscosity solution of 
\begin{equation*}
\left\{
\begin{array}{l l}
F_\ep(D^2 u_\ep, x)=f^\ep(x) &\quad \text{ in }U,\\
 u_\ep = g  &\quad \text { on }\bdry U.
 \end{array}\right.
\end{equation*}
Let us also use $v^-$ to denote the inf-convolution $(v_\delta)^{\delta^\zeta, -}$ and let $U^{\delta^\zeta}_\delta $ be the subset of $U$  given by Definition \ref{defn conv set}. We have,
\begin{equation}
\label{lem first claim}
\sup_{\bdry U^{\delta^\zeta}_\delta} (u_\ep - v^-)\leq C\delta^{\eta \zeta/2}
\end{equation}
and 
\begin{equation}
\label{lem second claim}
\sup_U(u-v_\delta) \leq \sup_{U^{\delta^\zeta}_\delta} (u^\ep -v^-)+C \delta^{\eta \zeta/2}+C\ep.
\end{equation}
\end{lem} 
\begin{proof}
According to  Proposition  \ref{c one alpha prop} and by assumption,  $u$ and $v_\delta$ are H\"older continuous with exponent $\eta$.  In addition, by the definition of $U^{\delta^\zeta}_\delta $, we have,
\[
 d(U^{\delta^\zeta}_\delta, \bdry U)\leq 2 \delta^{\zeta/2} M^{1/2} +\delta \leq C\delta^{\zeta/2}.
 \]
Hence we find,
 \[
  \sup_{\bdry U^{\delta^\zeta}_\delta} (u-v_\delta)\leq \sup_{\bdry U}(u-v_\delta) + C\delta^{\zeta \eta/2}
  \]
  and
  \begin{align*}
  \sup_U (u-v_\delta) &\leq \sup_{U\setminus U^{\delta^\zeta}_\delta}(u-v_\delta) + \sup_{U^{\delta^\zeta}_\delta} (u-v_\delta) \\
 & \leq   \sup_{\bdry U}(u-v_\delta)+  C\delta^{\zeta \eta/2}+\sup_{U^{\delta^\zeta}_\delta} (u-v_\delta) .
  \end{align*}
  According to assumption (\ref{main thm asump}), the first term on the  right-hand side of each of the previous lines is non-positive. Thus we obtain,
   \begin{equation}
   \label{thm lem eq}
  \sup_{\bdry U^{\delta^\zeta}_\delta} (u-v_\delta)\leq  C\delta^{\zeta \eta/2}
  \end{equation}
  and
  \begin{equation}
  \label{thm lem eq 2}
  \sup_U (u-v_\delta)  \leq   C\delta^{\zeta \eta/2}+\sup_{U^{\delta^\zeta}_\delta} (u-v_\delta) .
  \end{equation}
Since we chose $\ep\leq \ep_0$, we may apply Proposition \ref{compare u and uep}, and obtain, for all $x\in U$,
\begin{equation}
\label{use prop ep}
0\leq u(x)-u_\ep(x) \leq C\ep.
\end{equation}  
  In addition, according to item (\ref{item dist conv}) of Proposition \ref{prop conv}, we have, for all $x\in U$,
\begin{equation}
\label{inf cons}
0\leq v_\delta(x)-v^-(x)\leq 4 \delta^{\eta \zeta/(2-\eta)}M^{1/2}\leq C\delta^{\zeta \eta/2},
\end{equation}
where the second inequality follows since $\frac{\eta \zeta}{2-\eta}\geq \frac{\eta\zeta}{2}$. 
We use these facts to bound from above the difference of $u^\ep$ and $v^-$ that appears on the left-hand side of (\ref{lem first claim}):
\[
\sup_{\bdry U^{\delta^\zeta}_\delta} (u_\ep - v^-)\leq \sup_{\bdry U^{\delta^\zeta}_\delta}(u-v_\delta) +C\delta^{\zeta \eta/2}.
\]
We use (\ref{thm lem eq}) to bound the first term on the right-hand side of the previous line from above to obtain our first desired estimate  (\ref{lem first claim}). 
Next, we use (\ref{use prop ep}) and (\ref{inf cons}) to estimate the second term on the right-hand side of (\ref{thm lem eq 2}) from above and find,
\[
\sup_U(u-v_\delta) \leq C\delta^{\zeta \eta/2}+\sup_{U^{\delta^\zeta}_\delta}  (u_\ep -v^-)+C\ep,
\]
which is exactly the second desired estimate (\ref{lem second claim}). Thus the proof of the lemma is complete.
\end{proof}

\begin{proof}[Proof of Theorem \ref{main thm}]
Let $C_1$ and $r_0$ be the constants from  Corollary \ref{cor compare};   $\ep_0$ the constant from Proposition \ref{compare u and uep}; and $\zeta$, $\theta$ and $\alpha_1$ as given in the beginning of Section \ref{main lem sec}. We will be applying Proposition \ref{main lem} and we will be using the constants $\tilde{\delta}$, $\tilde{\alpha}$, and $\tilde{c}$ whose existence is asserted by Proposition \ref{main lem} (the constant $C_1$ that appears in the statement of Proposition \ref{main lem} will be exactly the constant $C_1$ that we have fixed here).

We define the constant $\bar{\delta}$ by
\[
\bar{\delta} = \min \left\{ \left(\frac{\ep_0}{4M^{1/2}+1}\right)^{\frac{2}{\theta}} , \tilde{\delta}, r_0^{\frac{1}{\theta}}\right\}.
\]
We take $\delta \leq \bar{\delta}$ and define the parameters $\nu$, $\ep$, and $r$ by
\[
\nu=4\linfty{v_\delta}{U}^{1/2}\delta^{\zeta/2}, \  \  \ep =\frac{\nu+\delta^\theta}{2}, \text{ and }r=\delta^\theta.
\]
Observe that since $\delta\leq \bar{\delta}$, our choices of constants imply  $\ep \leq \ep_0$ and $r\leq r_0$. In addition, we let $R$ denote $\diam U$, and  define the constants $\alpha_1$ and $\tilde{C}$ by,
\begin{equation}
\label{def alp C }
\alpha_1 = \min\left\{\theta \eta, \tilde{\alpha}, \theta\alpha\right\} \text{ and }\tilde{C} = 16R^2(\tilde{c}+2C_1).
\end{equation}

 Let $u_\ep$ and $v^-$ be as in the statement of Lemma \ref{lem main thm}.  According to item (\ref{item delta solves}) of Proposition \ref{prop conv} and  our choice of parameter $\nu$, we have that $v^-$ is a $\delta$-super solution of 
\[
F_\nu(D^2v, x)=f^\nu(x) \text{ in }U^{\delta^\zeta}_\delta.
\]
In addition, according to (\ref{lem first claim}), we have, for some constant $C_2$,
\begin{equation}
\label{use lem first claim}
\sup_{\bdry U^{\delta^\zeta}_\delta} (u_\ep - v^-)\leq C_2\delta^{\eta \zeta/2}.
\end{equation}
We use $m$ to denote 
\begin{equation}
\label{definem}
m = \sup_{ U_\delta^{\delta^{\zeta}}} (u_\ep-v^-) -C_2 \delta^{\eta \zeta/2}.
\end{equation}
We will prove, 
\begin{equation}
\label{est on m}
m\leq\tilde{C}\delta^{\alpha_1},
\end{equation}
where $\tilde{C}$ and $\alpha_1$ are given by (\ref{def alp C }). 
Once we establish (\ref{est on m}), the proof of the theorem will be complete. Indeed, according to (\ref{lem second claim}) of Lemma \ref{lem main thm}, we have 
\[
\sup_U(u-v_\delta) \leq \sup_{U^{\delta^\zeta}_\delta} (u^\ep -v^-)+C \delta^{\eta \zeta/2}+C\ep.
\]
Thus, we may use (\ref{est on m}) and the definition of $m$ to estimate the first term on the right-hand side of the previous line from above and obtain,
\[
\sup_U(u-v_\delta) \leq \tilde{C}\delta^{\alpha_1}+C_2 \delta^{\eta \zeta/2}+C \delta^{\eta \zeta/2}+C\ep.
\]
Since $\ep$ is a positive power of $\delta$, the previous line implies that the desired estimate (\ref{main thm conc}) holds.

To establish (\ref{est on m}), we proceed by  contradiction and assume 
\begin{equation}
\label{m}
m> \tilde{C} \delta^{\alpha_1}.
\end{equation}
We now apply Lemma \ref{elem lem} in $U_\delta^{\delta^{\zeta}}$ with $w(x)=u_\ep(x) -v^-(x) -C_2 \delta^{\eta \zeta/2}$. According to (\ref{use lem first claim}), $w$ is non-positive on the boundary of $U_\delta^{\delta^{\zeta}}$, and we have assumed its supremum, $m$, is positive. Hence, there exists  $x_0\in U_\delta^{\delta^{\zeta}}$ with
\begin{equation}
\label{d of x}
d(x_0, \bdry U_\delta^{\delta^{\zeta}}) \geq 2\delta^{\frac{\alpha_1}{\eta}}\geq  2\delta^{\theta}=2r,
\end{equation}
(where the second inequality and the equality follow from the definitions of $\alpha_1$, $\theta$ and $r$), and an affine function $l(x)$ such that 
\begin{equation}
\label{at x0}
l(x_0) =  u_\ep(x_0) -v^-(x_0),
\end{equation}
 and for all $x\in U_\delta^{\delta^{\zeta}}$,
\begin{equation*}
 u_\ep(x) -v^-(x)\leq l(x)-\frac{m}{2R^2}|x-x_0|^2.
\end{equation*}
In particular, we have $B_{r/2}(x_0)\subset U_\delta^{\delta^{\zeta}}$, so that we may take the supremum over $x\in B_{r/2}(x_0)$ of the previous line. We find,
\begin{equation}
\label{bdryBr/2}
\sup_{x\in \bdry B_{r/2}(x_0)} (u_\ep(x) -v^-(x) -l(x))\leq -\frac{m}{8R^2}r^2. 
\end{equation}

Next we ``freeze the coefficients" of $F_\ep$ at this point $x_0$ and define $\tu_\ep$ to be the solution of 
\begin{equation*}
\left\{
\begin{array}{l l}
F_\ep(D^2 \tu_\ep, x_0)=f^\ep(x_0) &\quad \text{ in }B_{2r}(x_0)\\
 \tu_\ep = u_\ep &\quad \text { on }\bdry B_{2r}(x_0).
 \end{array}\right.
\end{equation*}
We remark that, according to (\ref{d of x}), we have 
\begin{equation}
\label{BinU}
B_{2r}(x_0) \subset U^{\delta^\zeta}_\delta.
\end{equation}
Because $F_\ep$ and $B_{2r}(x_0)$ satisfy the assumptions of Corollary \ref{cor compare}, and  $C_1$ is exactly the constant provided by  Corollary \ref{cor compare}, we find
\begin{equation}
\label{tuep and uep and l0}
\linfty{\tu_\ep - u_\ep}{B_{2r}(x_0)}\leq C_1r^{2+\alpha}
\end{equation}
and
\begin{equation*}
\conealpha{\tu_\ep}{B_r(x_0)}^*\leq C_1.
\end{equation*}
We will be applying the first part of Proposition \ref{main lem}. The previous estimate says exactly that the hypothesis (\ref{A1}) is satisfied.  
To place ourselves exactly into the situation of Proposition \ref{main lem}, we modify $v^-$ by a affine function, and define $v$ by
\[
v(x)=v^-(x) + l(x)-\frac{m}{8R^2}r^2+C_1r^{2+\alpha}.
\]
According to item (\ref{item reg conv}) of Proposition \ref{prop conv}, $v$ satisfies hypothesis (\ref{A1.5}) of Proposition \ref{main lem}.  We will now show that $\tu_\ep-v$ is non-positive on the boundary of $B_{r/2}(x_0)$, thus verifying the remaining hypothesis (\ref{A2}). Indeed, according to (\ref{tuep and uep and l0}) and the definition of $v$, we have, for all $x\in \bdry B_{r/2}(x_0)$,
\begin{align*}
\tu_\ep(x)-v(x)&\leq  u_\ep(x)+C_1r^{2+\alpha}  -v(x)\\
&= u_\ep(x)+C_1r^{2+\alpha}    - ( v^-(x)  + l(x)-\frac{m}{8R^2}r^2+C_1r^{2+\alpha})\\
& = u_\ep(x)-v^-(x) - l(x) +\frac{m}{8R^2}r^2.
\end{align*}
According to (\ref{bdryBr/2}),  the right-hand side of the previous line is non-positive on  $\bdry B_{r/2}(x_0)$, so we find,
\[
\tu_\ep(x)-v(x)\leq 0 \text{ for all }x\in \bdry B_{r/2}(x_0).
\]
We have shown that $v$ and $\tu_\ep$ satisfy the assumptions of Part 1 of Proposition \ref{main lem} with  our choices of $\nu$ and $\ep$. Applying the proposition therefore yields,
\begin{equation}
\label{consequence of lemma}
\sup_{B_{r/2}(x_0)} \tu_\ep -v \leq \tilde{c}\delta^{\tilde{\alpha}} r^2.
\end{equation}
We will now show that (\ref{consequence of lemma}) and (\ref{at x0}) lead to a contradiction. By (\ref{tuep and uep and l0}), the definition of $v$, and (\ref{at x0}), we have
\begin{align*}
\tu_\ep(x_0)-v(x_0) &\geq u_\ep(x_0)-C_1r^{2+\alpha}  -v(x_0)\\
&= u_\ep(x_0) -C_1r^{2+\alpha}   -( v^-(x_0)  + l(x_0)-\frac{m}{8R^2}r^2+C_1r^{2+\alpha})\\
&=u_\ep(x_0) - v^-(x_0) -l(x_0) +\frac{m}{8R^2}r^2 - 2C_1r^{2+\alpha}
\end{align*}
Let us now recall that $x_0$ is exactly  the point at which the affine function $l$ touches $u_\ep-v^-$; in other words,  (\ref{at x0}) holds. Therefore, the sum of the first three terms on the right-hand side of the previous line is simply zero, and we obtain,
\begin{align*}
\tu_\ep(x_0)-v(x_0) &\geq \frac{m}{8R^2}r^2 - 2C_1r^{2+\alpha}.
\end{align*}
Rearranging yields,
\[
m \leq 8R^2(r^{-2}(\tu_\ep(x_0)-v(x_0)) +2C_1r^{\alpha}).
\]
We use the estimate  (\ref{consequence of lemma}) to bound the term in the inner-most parenthesis on the right-hand side of the previous line. Then, we recall the definitions of $\alpha_1$ and $\tilde{C}$  and find,
\[
m \leq 8R^2(\tilde{c}\delta^{\tilde{\alpha}}+2C_1\delta^{\theta\alpha})\leq \frac{\tilde{C}}{2}\delta^{\alpha_1}.
\]
But this contradicts (\ref{m}); therefore, (\ref{est on m}) must hold, and thus the proof of the theorem is complete.
\end{proof}

\section{Approximation schemes}
\label{sec h}

We now present our result on monotone finite difference approximations to (\ref{eqn for u}). First, we introduce the necessary notation and discuss our assumptions. In the next section we give the full statement of Theorem \ref{vague main thm h} and its proof. We follow the notation of \cite{Approx schemes, KT estimates, KT discrete methods}. Our mesh of discretization is
\[
E=h\zz^n =\left\{mh:\  m\in \zz^n\right\},
\]
the integer mesh of size $h$. 
We fix some $N>1$ and define the bounded subset $Y_N$ of $E$  by,
\[
Y_N=\left\{y\in E : \  0<|y|<Nh\right\}.
\]
The standard second-order difference operator $\delta_y^2$ is defined as,
\[
\delta_y^2u(x) = \frac{u(x-y)-2u(x)+u(x+y)}{|y|^2},
\]
and the collection of $\delta_y^2u(x)$ for $y$ in $Y_N$ is denoted by $\delta^2 u(x)$:
\[
\delta^2 u(x) = \left\{ \delta_y^2u(x) : \  y\in Y_N\right\}.
\]
We consider \emph{finite difference operators} $F_h$ of the form
\[
F_h[u](x) = \mathcal{F}( \delta^2 u(x) , u(x), x),
\]
where
\[
\mathcal{F}: \rr^{Y_N}\times \rr\times \rr^n \rightarrow \rr.
\]
We assume that the operators are \emph{monotone}, which means they satisfy:
\begin{enumerate}[({$F_h$}1)]
\item for all $x$ in $U$, $z, \tau\in \rr$, and $q, \eta\in \rr^{Y_N}$ such that $0\leq \eta_y\leq \tau$ for all $y\in Y_N$,
\[
F_h(q+\eta, z, x) \geq F_h(q, z, x) \geq F_h(q+\eta, z+\tau, x).
\]
\end{enumerate}
This definition of a monotone operator is equivalent to the one given in the introduction.

We say that the family of difference operators $\left\{F_h\right\}_{0\leq h\leq h_0}$ (also called a \emph{difference scheme}) is \emph{consistent with $F$} in $U$ if for each $\phi\in C^2(U)$,
\[
F_h[\phi](x) \rightarrow (F(D^2\phi,x)-f(x)) \text{ in }C(U) \text{ as }h\rightarrow 0.
\]
In \cite{KT discrete methods}, it was shown that if $F(X,x)$ is elliptic and continuous in $x$, then there exists a difference scheme $\left\{F_h\right\}$ that is consistent with $F$. 

In order to obtain an error estimate, we need to quantify the above limit. As in \cite{Approx schemes}, we make the following assumption:
\begin{enumerate}[({$F_h$}2)]
\item 
there exists a positive constant $K$ such that for all $\phi\in C^3(U)$ ,
\[
|F_h[\phi](x) - (F(D^2 \phi, x)-f(x))|\leq K(1+\linfty{D^3\phi}{U})h \text{ for all }x\in U\cap E.
\]
\end{enumerate} 
Schemes that satisfy ($F_h$2) are said to be \emph{consistent with an error estimate for $F$}. 

We divide $U\subset \rr^n$ into interior and boundary points relative to an operator $F_h$. We denote by   $U_h$ the intersection of $U$ and the mesh $E$:
\[
U_h=U\cap h\zz^n.
\]
We define the interior mesh points $U_h^i$ as,
\[
U_h^i = \left\{x\in U_h: \  d(x, \bdry U) > Nh\right\}.
\]
We observe that $F_h[u](x)$, for any $x\in U^i_h$, depends only on the values of $u$ in $U$. We define the boundary mesh points $U_h^b$  as,
\[
U_h^b=U_h\setminus U_h^i.
\]

For a mesh function $u:U_h\rightarrow \rr$ and for $V\subset U$ we define the following norms and seminorm:
\[
\linfty{u}{V} = \sup_{x\in V\cap E}|u(x)|,
\]
\[
\etasemi{u}{V} = \sup_{x,y\in V\cap E}\frac{u(x)-u(y)}{|x-y|^\eta}, 
\]
and
\[
\etanorm{u}{V} =\linfty{u}{V} +\etasemi{u}{V}.
\]

Given $g\in C^{1,\gamma}(U)$, we  consider the discrete boundary value problem
\begin{equation}\left\{
\begin{array}{l l}
\label{eqn for uh}
F_h[v_h](x)=0 &\quad \text{ in }U_h^i \\
v_h=g &\quad \text{ on }U_h^b.
\end{array}\right.
\end{equation}

It is shown in \cite{KT discrete methods, KT estimates} that (\ref{eqn for uh}) has a unique solution $v_h$ and that $v_h$ is uniformly H\"older continuous. We summarize these results:
\begin{thm}
\label{propKT}
Assume (U\ref{assum U}), ($F_h$1), ($F_h$2) and let $g\in C^{1,\gamma}(U)$.  There exists a unique solution $v_h$ of (\ref{eqn for uh}). Moreover, there exist constants  $\eta$ and $C$ that depend $n$, $\lambda$, $\Lambda$, $\conegamma{g}{U}$, $\diam U$ and the regularity of $\bdry U$ such that for every $h\in (0, 1)$,
\[
\etanorm{v_h}{U_h} \leq C.
\]
\end{thm}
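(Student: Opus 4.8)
The plan is to cite the two references \cite{KT discrete methods} and \cite{KT estimates} directly; this theorem is a summary of results proved there, so the role of a proof here is to explain how those results combine and to identify the hypotheses under which they apply. First I would establish \emph{existence and uniqueness} of $v_h$. Uniqueness is immediate from monotonicity: if $v_h^1$ and $v_h^2$ both solve (\ref{eqn for uh}), then since they agree on $U_h^b$, assumption ($F_h$1) — in the comparison form stated in the introduction as ($F_h$\ref{monotone}) — gives $v_h^1 \leq v_h^2$ and $v_h^2 \leq v_h^1$ on $U_h$, hence equality. Existence follows from Perron's method adapted to the discrete setting: one checks that the constant functions $\pm C$, for $C$ large depending on $\linfty{g}{\bdry U}$ and the consistency constant $K$, are a discrete supersolution and subsolution; monotonicity then lets one run the standard iteration (or invoke the discrete maximum principle together with a fixed-point/continuity argument as in \cite{KT discrete methods}) to produce a solution lying between them. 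Since $U_h$ is a finite set (as $U$ is bounded, by (U\ref{assum U})), existence can also be obtained by a degree-theoretic or direct fixed-point argument on $\rr^{U_h}$.

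Next I would address the \emph{a priori $C^{0,\eta}$ estimate}, which is the substantive part and is exactly the content of the discrete regularity theory of Kuo and Trudinger. The argument there is a discrete analogue of the Krylov–Safonov Hölder estimate: one proves a discrete Harnack inequality or, more directly, a discrete $C^{0,\eta}$ estimate by iterating a measure-theoretic decay-of-oscillation lemma, using the monotonicity ($F_h$1) to compare $v_h$ with discrete barriers and the consistency-with-error-estimate ($F_h$2) to control the defect of these barriers under $F_h$. The uniform ellipticity (F\ref{ellipticity}) of the limiting operator $F$, transmitted through ($F_h$2), is what furnishes the structural constants $\lambda, \Lambda$ entering the Hölder exponent $\eta$ and the constant $C$. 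The boundary Hölder estimate comes from the regularity of $\bdry U$ in (U\ref{assum U}) together with $g \in C^{1,\gamma}(\bdry U)$ from (G\ref{g c one alpha}), via discrete boundary barriers; combining interior and boundary estimates in the usual way yields the global bound $\etanorm{v_h}{U_h} \leq C$ with $C$ depending only on the listed quantities and, crucially, \emph{not} on $h$.

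The main obstacle — or rather the main point requiring care — is ensuring that all constants ($\eta$, $C$, and the threshold $h_0$) are genuinely uniform in $h$ as $h \to 0$. This is where one must be faithful to the quantitative form of the Kuo–Trudinger estimates: the decay lemma must be applied at all dyadic scales down to scale comparable to $Nh$, and one needs the consistency error in ($F_h$2) to be summable against the geometric iteration so that it does not degrade the exponent. Since the estimate is stated for $h \in (0,1)$ and the constants depend only on $n, \lambda, \Lambda, \conegamma{g}{\bdry U}, diam U$ and the regularity of $\bdry U$, I would simply record that this uniformity is precisely what is proved in \cite[Theorems ...]{KT discrete methods} and \cite[...]{KT estimates}, and that our hypotheses (U\ref{assum U}), ($F_h$1), ($F_h$2), (G\ref{g c one alpha}) are exactly those under which their results hold, specialized to the right-hand side $0$ in (\ref{eqn for uh}). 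No further computation is needed beyond verifying the matching of hypotheses.
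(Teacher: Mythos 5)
Your proposal is correct and matches the paper's treatment: the paper gives no proof of this theorem, simply citing \cite{KT discrete methods} and \cite{KT estimates} as the source of existence, uniqueness, and the $h$-uniform H\"older bound, which is exactly what you do (your added sketch of uniqueness via monotonicity and of the discrete Krylov--Safonov argument is consistent with those references). No gap to report.
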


\subsection{Inf and sup convolutions of mesh functions}
We recall the definitions of regularization by inf- and sup- convolution of mesh functions. This technique was used in \cite{Approx schemes}. 
\begin{defn}
For a  function $v_h$ on $U_h$ and a constant $\theta>0$, we define, for all $x\in U$,  the \emph{sup-convolution} $v_h^{\theta, +}(x)$ and the \emph{inf convolution} $v_h^{\theta, -}(x)$ by
\[
v_h^{\theta, +}(x) = \sup_{y\in U_h}\left\{v_h(y)-\frac{|x-y|^2}{2\theta}\right\} \text{ and  } v_h^{\theta, -}(x) = \inf_{y\in U_h}\left\{v_h(y)+\frac{|x-y|^2}{2\theta}\right\}.
\]
\end{defn}
\begin{defn}
\label{conv set h}
Given $h>0$, $\delta>0$, $\theta>0$ and a mesh function $v_h$ on $U_h$, we define the subset $U^{\theta}_{h,\delta}$ of $U$ by
\[
U^{\theta}_{h,\delta} = \{ x\in U \  | \  d(x, \bdry U)> 4 \theta^{1/2} \linfty{v_h}{U_h}^{1/2} +\sqrt{n}h+ \delta\}. 
\]
\end{defn}
In the appendix we summarize the basic properties of inf- and sup- convolutions of mesh functions (see Proposition \ref{prop conv h}). 

It is a classical fact of viscosity theory that if $u\in C(U)$ is the viscosity solution of $F(D^2u)=0$ in $U$, then the sup-convolution of $u$ is a subsolution of the same equation (see Proposition \ref{prop conv}). In the following proposition, we establish a similar relationship between solutions $v_h$ of (\ref{eqn for uh}) and $\delta$-solutions of (\ref{eqn for u}).

\begin{prop}
\label{v_h is delta soln} Assume that $F_h$ is a monotone scheme that is consistent with an error estimate for $F$ with constant $K$. Suppose $v_h$ is a solution of  (\ref{eqn for uh}) in $U$. Then $v_h^{\theta, +}$ is a $\delta$-subsolution of 
\[
F^\nu(D^2v, x)=f_\nu(x) -Kh \text{ in } U^{\theta}_{h,\delta},
\]
and $v_h^{\theta, -}$ is a $\delta$-supersolution of 
\[
F_\nu(D^2v, x)=f^\nu(x) +Kh \text{ in } U^{\theta}_{h,\delta},
\]
with $\delta=Nh$ and $\nu = 4\theta^{1/2}\linfty{u_h}{U_h}^{1/2}+\sqrt{n}h$.
\end{prop}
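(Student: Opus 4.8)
The plan is to show that the sup-convolution $v_h^{\theta,+}$ is a $\delta$-subsolution of the stated equation by working directly from the definition of $\delta$-subsolution, using the consistency estimate $(F_h 2)$ together with the monotonicity $(F_h 1)$ of the scheme. Fix $x$ with $B_\delta(x)\subset U^\theta_{h,\delta}$ and a paraboloid $P$ with $P(x)=v_h^{\theta,+}(x)$ and $P\geq v_h^{\theta,+}$ on $B_\delta(x)$; the goal is to verify $F^\nu(D^2P,x)\geq f_\nu(x)-Kh$. First I would unwind the sup-convolution: there is a point $y^*\in U_h$ achieving the supremum, and the standard estimates for sup-convolutions of mesh functions (Proposition~\ref{prop conv h}) give $|x-y^*|\leq 2\theta^{1/2}\linfty{v_h}{U_h}^{1/2}$, so that $y^*$ lies well inside $U$ — in fact at distance more than $\sqrt{n}h+\delta$ from $\bdry U$ by the definition of $U^\theta_{h,\delta}$ — and moreover $x\mapsto P(x+(y^*-x))=P(x)$... more precisely the translated paraboloid $\tilde P(z):=P(z+x-y^*)+\tfrac{|x-y^*|^2}{2\theta}$ (or the appropriate shift) lies above $v_h$ near $y^*$, with contact at $y^*$. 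The key point is that a paraboloid touching $v_h^{\theta,+}$ from above at $x$ translates to a paraboloid touching $v_h$ from above at the nearby lattice-shifted point, with the \emph{same} Hessian $D^2P$.

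Next I would use this contact paraboloid as a test function for the scheme. Since $v_h$ solves $F_h[v_h]=0$ at the relevant interior mesh point and, by monotonicity $(F_h 1)$, replacing $v_h$ by a function lying above it (with equality at the center) can only increase $F_h$, we get $F_h[\tilde P](y^*+mh)\geq 0$ for the appropriate mesh point $y^*+mh$ near $y^*$ — here I need $B_\delta(x)$ large enough (radius $\delta=Nh$) to contain all the lattice translates $y\in Y_N$ used by $F_h$, which is exactly why the extra $\sqrt{n}h$ and $\delta$ appear in the definition of $U^\theta_{h,\delta}$ and why $\delta=Nh$ is the right choice. Then apply the consistency estimate $(F_h 2)$ to the (smooth, in fact quadratic, so $D^3\equiv 0$) test paraboloid $\tilde P$: $|F_h[\tilde P](\cdot)-(F(D^2\tilde P,\cdot)-f(\cdot))|\leq Kh$ since $\linfty{D^3\tilde P}{U}=0$. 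Combining, $F(D^2P, z)-f(z)\geq -Kh$ at the mesh point $z=y^*+mh$, which lies within distance $\nu=4\theta^{1/2}\linfty{v_h}{U_h}^{1/2}+\sqrt{n}h$ of $x$. Finally, take the supremum over $y\in B_\nu(x)\cap U$ in $F$ and the infimum in $f$: $F^\nu(D^2P,x)=\sup_{|z-x|\leq\nu}F(D^2P,z)\geq F(D^2P,z_0)\geq f(z_0)-Kh\geq \inf_{|z-x|\leq\nu}f(z)-Kh=f_\nu(x)-Kh$, which is precisely the $\delta$-subsolution inequality. The supersolution statement for $v_h^{\theta,-}$ is entirely symmetric, using the other half of $(F_h 1)$ and $(F_h 2)$.

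The main obstacle, and the step requiring the most care, is the bookkeeping that ties together three length scales: the sup-convolution displacement $2\theta^{1/2}\linfty{v_h}{U_h}^{1/2}$, the lattice spacing (contributing $\sqrt{n}h$ since the nearest mesh point to $y^*$ is within $\tfrac{\sqrt n}{2}h$, and the stencil of $F_h$ reaches out $Nh$), and the radius $\delta$ of the ball on which the test paraboloid must dominate. One must check that $\delta=Nh$ makes $B_\delta(x)$ (after translation to $y^*$ and then to the mesh point) contain the full stencil $Y_N$, that all these points remain in $U$ — guaranteed by $x\in U^\theta_{h,\delta}$ — and that the resulting point $z_0$ where the frozen-coefficient inequality holds is within $\nu$ of $x$, so that the $F^\nu$, $f_\nu$ operators defined via $B_\nu$ capture it. Everything else is a routine combination of the sup-convolution properties (Proposition~\ref{prop conv h}), monotonicity, and the consistency error estimate.
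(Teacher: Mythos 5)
Your proposal is essentially the paper's proof: unwind the sup-convolution at its contact point $y^*\in U_h$, use monotonicity to get $F_h[\,\cdot\,](y^*)\geq F_h[v_h](y^*)\geq 0$ for the touching paraboloid (your translation of $P$ is harmless and equivalent to the paper's step, since second differences of quadratics are translation invariant, so one may directly compare $\delta^2_yP(x)=\delta^2_yP(y^*)$ with $\delta^2_yv_h(y^*)$), then apply consistency with $D^3P\equiv 0$ to get $F(D^2P,y^*)\geq f(y^*)-Kh$ and pass to $F^\nu$, $f_\nu$ via $|x-y^*|\leq\nu$. The only bookkeeping corrections: since the supremum defining $v_h^{\theta,+}$ is over the mesh $U_h$, the contact point is already a mesh point, so no shift to a nearby lattice point $y^*+mh$ is needed, and the displacement bound from Proposition \ref{prop conv h} is $4\theta^{1/2}\linfty{v_h}{U_h}^{1/2}+\sqrt{n}h$ rather than $2\theta^{1/2}\linfty{v_h}{U_h}^{1/2}$, which is exactly why $\nu$ has the stated form.
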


\begin{proof}
We will show that $v_h^{\theta, +}$ is a $\delta$-subsolution; the other part of the proof is very similar. Let $x\in U^{\theta}_{h,\delta}$ and let $P$ be a quadratic polynomial with
\begin{equation}
\label{P=v}
P(x)=v_h^{\theta, +}(x)
\end{equation}
and 
\begin{equation}
\label{P>v}
P(y)\geq v_h^{\theta, +}(y) \text{ for every }y\in B_{hN}(x).
\end{equation}
By definition of $\delta_y^2 $, we have, 
\begin{align*}
\delta_y^2 P(x) &= \frac{1}{|y|^2}(P(x+y)-2P(x)+P(x-y)).
\end{align*}
Let us take  $y\in Y_N$, so that $x+y\in B_{hN}(x)$.  We use (\ref{P=v}) and (\ref{P>v}) to estimate the right-hand side of the previous line from below and obtain,
\begin{align*}
\delta_y^2 P(x) &\geq \frac{1}{|y|^2}(v_h^{\theta, +}(x+y)-2v_h^{\theta, +}(x)+v_h^{\theta, +}(x-y)).
\end{align*}
(It is exactly here that it is important that $P$ stays above $v_h^{\theta, +}$ on \emph{all} of $B_{hN}(x)$.)  Let $x^*$ be a point where the supremum is achieved in the definition of $v_h^{\theta, +}(x)$. Using the definition of $v_h^{\theta, +}$ in all three terms of the previous line yields,
\begin{align*}
\delta_y^2 P(x) 
&\geq \frac{1}{|y|^2}\left(v_h(x^*+y)-\frac{|x-x^*|^2}{2\theta} - 2\left(v_h(x^*)-\frac{|x-x^*|^2}{2\theta} \right) + v_h(x^*-y)-\frac{|x-x^*|^2}{2\theta} \right)\\
&=\delta^2_y v_h(x^*),
\end{align*}
where the equality follows from the definition of $\delta^2_y $.  
Since $P$ is a quadratic polynomial, we have $\delta_y^2 P(x) = \delta_y^2 P(x^*)$. We use the monotonicity of $F_h$ and the conclusion of the previous computation to obtain
\begin{equation}
\label{FhP Fhv}
F_h[P](x^*)\geq F_h[v_h](x^*).
\end{equation}
By the properties of sup-convolutions, (see item (\ref{x and x* for vh}) of Proposition \ref{prop conv h}), we find
\begin{equation}
\label{x-x^*}
|x-x^*| \leq 4\linfty{v_h}{U_h}^{1/2}\theta^{1/2}+\sqrt{n}h .
\end{equation}
Since $x\in U^\theta_{h,\delta}$, the definitions of $U^{\theta}_{h,\delta}$ and $U^i_h$, together with the previous bound, imply 
\[
(B_{hN}(x^*)\cap E)\subset U_h^i.
\]
Because $v_h$ is a solution of (\ref{eqn for uh}) in $U_h^i$, we have $F_h[v_h](x^*)\geq 0$. Together with (\ref{FhP Fhv}), this implies 
\[
F_h[P](x^*)\geq 0.
\]
Since $F_h$ is  consistent with an error estimate for $F$, we obtain
\[
0\leq F_h[P](x^*) \leq F(D^2 P, x^*) -f(x^*) +Kh \leq F^\nu(D^2P, x)-f_\nu(x)+Kh,
\]
where the last inequality follows from (\ref{x-x^*}) and our choice of $\nu$.
\end{proof}

\section{Proof of Theorem \ref{vague main thm h}}
\label{sec proof of theorem h}

Here is the precise statement of Theorem \ref{vague main thm h}.
\begin{thm}
\label{main thm h}
Assume (U\ref{assum U}), (F\ref{ellipticity}), (F\ref{F lip}), (G\ref{f assumption}),  $F(0,x)\equiv 0$, and let us take $g\in C^{1,\gamma}(U)$. 
Assume that $\left\{F_h\right\}$ is a monotone scheme that is consistent with an error estimate for $F$ with constant $K$. Assume that $u$ is the viscosity solution of (\ref{eqn for u}) and that $v_h \in C^{0,\eta}(U)$ is the solution of (\ref{eqn for uhh}). There exist positive constants $\bar{\alpha}$, $\bar{h}$ and $\bar{c}$ such that for all $0<h\leq \bar{h}$, 
\begin{equation}
\label{claim h}
\sup_{U_h} |u-v_h|\leq \bar{c}h^{\bar{\alpha}}.
\end{equation}
The constant $\bar{\alpha}$  depends on $n$, $\lambda$, $\Lambda$ and  $\eta$; the constants $\bar{h}$ and $\bar{c}$ depend on $n$, $\lambda$, $\Lambda$, $\kappa$, $K$, $\czeroone{f}{U}$, $\conegamma{g}{ U}$,  $\diam U$ and the regularity of $\bdry U$. 
\end{thm}

The proof is very similar to that   of Theorem \ref{vague main thm}. 
Throughout the remainder of this section, we will use $C$ and $C_i$ with $i=1,2,..$ to denote generic constants that may depend on $n$, $\lambda$, $\Lambda$, $\kappa$, $K$, $\czeroone{f}{U}$, $\conegamma{g}{U}$, $\diam U$ and the regularity of $\bdry U$. In addition, $C$ may change from line to line. 

We will give the proof of the bound
\[
\sup_{U_h}u-v_h\leq \bar{c}h^{\bar{\alpha}},
\]
the proof of the other side of the estimate is similar.

The first step is to ``replace" $u$ by $u_\ep$ and $v_h$ by its inf-convolution. We formulate this as the following lemma:
\begin{lem}
\label{lem main thm h}
In addition to the assumptions of Theorem \ref{main thm h}, let us take $\ep<\ep_0$, where $\ep_0$ is the constant from Proposition \ref{compare u and uep}, and  let $u_\ep$ be the viscosity solution of 
\begin{equation*}
\left\{
\begin{array}{l l}
F_\ep(D^2 u_\ep, x)=f^\ep(x) &\quad \text{ in }U,\\
 u_\ep = g  &\quad \text { on }\bdry U.
 \end{array}\right.
\end{equation*}
We set $\delta$ to be,
\[
\delta=Nh.
\]
Let us  use $v^-$ to denote the inf-convolution $v_h^{\delta^\zeta, -}$ and let $U^{\delta^\zeta}_{h,\delta} $ be the set given by Definition \ref{conv set h}. 
We define the constant $\xi$ by,
\[
\xi =\min \left\{ \frac{\eta\zeta}{2},  \frac{\eta }{2-\eta}\right\}.
\]
We have,
\begin{equation}
\label{lem first claim h}
\sup_{\bdry U^{\delta^\zeta}_{h,\delta} } (u_\ep - v^-)\leq C\delta^{\xi}
\end{equation}
and 
\begin{equation}
\label{lem second claim h}
\sup_{U_h}(u-v_h) \leq \sup_{U^{\delta^\zeta}_{h,\delta} } (u^\ep -v^-)+C \delta^{\xi}+C\ep.
\end{equation}
\end{lem} 

\begin{proof}
The proof of this lemma is a little bit more delicate than that of Lemma \ref{lem main thm}, because the function $v_h$ is only defined on points of the mesh $U_h$, and not on the rest of $U$. Before proceeding, let us recall that, according to Proposition \ref{compare u and uep}, we have, for all $x\in U$,
\begin{equation}
\label{use prop ep h}
0\leq u(x)-u_\ep(x) \leq C\ep.
\end{equation}  
Let us also recall the definition of $U^{\delta^\zeta}_{h,\delta}$: 
\[
U^{\theta}_{h,\delta} = \{ x\in U \  | \  d(x, \bdry U)>  4 \delta^{\zeta/2} \linfty{v_h}{U_h}^{1/2} +\sqrt{n}h+ \delta\}. 
\]
We will first establish (\ref{lem first claim h}). To this end, let us fix some $x\in \bdry U^{\delta^\zeta}_{h,\delta}$ and let $y\in U_h$ be the nearest neighboring mesh point of $x$. By item (\ref{item conv holder}), we have a lower bound on $v^-(x)$ in terms of $v_h(y)$:
\begin{equation}
\label{v-x}
v^-(x) \geq v_h(y)-C\delta^{\frac{\eta }{2-\eta}}.
\end{equation}
 In addition, the definition of $U^{\delta^\zeta}_{h,\delta} $ implies that there exits a point $z$ on the discrete boundary of $U$ that is ``close" to $x$:  precisely, $z\in U^b_h$ and satisfies,
\[
|x-z|\leq  4 \delta^{\zeta/2} \linfty{v_h}{U_h}^{1/2} +\sqrt{n}h+ \delta  \leq C\delta^{\zeta/2}.
\]
Since $y$ is a neighbor of $x$, we may use the triangle inequality and the previous estimate to find that $y$ is also ``close" to $z$:
\[
|y-z|\leq |y-x|+|x-z|\leq nh+C\delta^{\zeta/2}\leq  C\delta^{\zeta/2}.
\]
According to Theorem  \ref{propKT},  $v_h$ is H\"older continuous. Therefore, we may use the previous inequality to bound $v_h(y)$ on from below in terms of $v_h(z)$:
\[
v_h(y)\geq v_h(z)-C\delta^{\frac{\eta}{2-\eta}}.
\] 
We use this to bound the first term on the right-hand side of (\ref{v-x}) from below and obtain,
\begin{equation}
\label{v-x est}
v^-(x)\geq  v_h(z) - C\delta^{\eta\zeta/2} - C\delta^{\frac{\eta }{2-\eta}} \geq v_h(z) -C\delta^\xi,
\end{equation}
where the last inequality follows from the definition of $\xi$. In addition, $u$ is H\"older continuous (according to Proposition \ref{c one alpha prop}), so we find,
\[
u(x) \leq u(z) + C\delta^{\eta\zeta/2}\leq u(z) + C\delta^\xi.
\]
Finally, we use (\ref{use prop ep h}) to estimate the left-hand side of the previous line from below by $u_\ep(x)$, and obtain
\[
u_\ep(x)\leq u(z) + C\delta^\xi.
\]
 Subtracting (\ref{v-x}) from the previous line yields:
\[
u_\ep(x)-v^-(x) \leq u(z) -v_h(z)+ C\delta^\xi.
\]
Since $z\in U^b_h$, and we have assumed that $u$ and $v_h$ agree on $U^b_h$, we have that the right-hand side of the previous line is simply $C\delta^\xi$. Moreover, this holds for all $x\in \bdry U^{\delta^\zeta}_{h,\delta}$, so we have established (\ref{lem first claim h}). 

Let us now prove that  (\ref{lem second claim h}) holds. We  use that $u$ and $v_h$ are H\"older continuous, as well as the definition of $U^{\delta^\zeta}_{h,\delta} $, to bound the left-hand side of (\ref{lem second claim h}) from above as follows:
\begin{align*}
\sup_{U_h} (u-v_h) &\leq \sup_{U_h\setminus( U^{\delta^\zeta}_{h,\delta} \cap E)} (u-v_h) +\sup_{ U^{\delta^\zeta}_{h,\delta} \cap E} (u-v_h) \\
&\leq  \sup_{U^b_h} (u-v_h) +C\delta^{\zeta\eta/2}+\sup_{ U^{\delta^\zeta}_{h,\delta} \cap E} (u-v_h).
\end{align*}
Since $u$ and $v_h$ agree on $U^b_h$, the first term is zero. Together with the definition of the constant $\xi$, this implies,
\[
\sup_{U_h} (u-v_h)\leq C\delta^\xi+\sup_{ U^{\delta^\zeta}_{h,\delta} \cap E} (u-v_h).
\]
Finally, by item (\ref{item conv sizes}), we have $v_h\geq v^-$ on all of $U_h$. We use this, together with the upper bound (\ref{use prop ep h}) on $u$ in terms of $u_\ep$, to estimate the second term on the right-hand side of the previous line from above and obtain,
\[
\sup_{U_h} (u-v_h)\leq C\delta^\xi+\sup_{ U^{\delta^\zeta}_{h,\delta} \cap E} (u_\ep-v^-)+C\ep.
\]
Since $U^{\delta^\zeta}_{h,\delta} \cap E$ is contained in $U^{\delta^\zeta}_{h,\delta}$, the proof of item (\ref{lem second claim h}), and hence of the lemma, is complete.
\end{proof}

\begin{proof}[Proof of Theorem \ref{main thm h}]
We denote $M=\sup_h\etanorm{v_h}{U}$, which is finite by Theorem \ref{propKT}. We take $C_1$ to be the constant from  Corollary \ref{cor compare}, $r_0$ the constant from Proposition \ref{compare to constant coef},  $\ep_0$ the constant from Proposition \ref{compare u and uep}, and $\zeta$ and $\theta$ as given in the beginning of Section \ref{main lem sec}.  We will be applying Proposition \ref{main lem} and we will be using the constants $\tilde{\delta}$, $\tilde{\alpha}$, and $\tilde{c}$ whose existence is asserted by Proposition \ref{main lem} (the constant $C_1$ that appears in the statement of Proposition \ref{main lem} will be exactly the constant $C_1$ that we have fixed here). 
We define
\[
\bar{h} = \frac{1}{N} \min \left\{ \left(\frac{\ep_0}{4M^{1/2}+1}\right)^{\frac{2}{\theta}} , \tilde{\delta}, r_0^{\frac{1}{\theta}}\right\}.
\]
We take $h\leq \bar{h}$ and define the parameters $\delta$, $\nu$, $\ep$ and $r$ by,
\[
\delta = Nh, \,  \nu = 4\theta^{1/2}\linfty{v_h}{U_h}^{1/2}+\sqrt{n}N\delta,\,     \ep =\nu+\delta^\theta \text{ and }r=\delta^\theta.
\]
 Since we have $h\leq \bar{h}$, our choices of the various parameters imply   $\ep \leq \ep_0$ and $r\leq r_0$.  We also denote $R=\diam U$ and set $\tilde{\alpha}$ and $\tilde{C}$ to be,
 \begin{equation}
 \label{chose alph C h}
\alpha_1 = \min\left\{\theta \eta, \tilde{\alpha}, \theta\alpha\right\} \text{ and }\tilde{C} =  16R^2\left(\tilde{c}+2C_1+ 	\frac{K}{8\lambda} \right).
 \end{equation}

Let $v^-$ and $u_\ep$ be as in Lemma \ref{lem main thm h}. 
According to Proposition \ref{v_h is delta soln} and our choice of parameter $\nu$, we have that $v^-$ is a $\delta$-supersolution of
\[
F_\nu(D^2v, x)=f^\nu(x)+Kh \text{ in } U^{\delta^\zeta}_{h,\delta}.
\]
In addition, according to Lemma \ref{lem main thm h}, we have, for some constant $C_2$,
\begin{equation}
\label{lem first claim h use}
\sup_{\bdry U^{\delta^\zeta}_{h,\delta} } (u_\ep - v^-)\leq C_2\delta^{\xi}.
\end{equation}
We use $m$ to denote,
\[
m = \sup_{ U_{\delta,h}^{\delta^{\zeta}}} (u_\ep-v^-)-C_2\delta^{\xi}.
\]
We will establish
\begin{equation}
\label{est m h}
m\leq \tilde{C}\delta^{\alpha_1},
\end{equation}
where $\tilde{C}$ and $\alpha_1$ are given by (\ref{chose alph C h}). Once we establish this estimate, the proof of the theorem will be complete. Indeed, according to (\ref{lem second claim h}) of Lemma \ref{lem main thm h}, we have,
\begin{equation*}
\sup_{U_h}(u-v_h) \leq \sup_{U^{\delta^\zeta}_{h,\delta} } (u^\ep -v^-)+C \delta^{\xi}+C\ep.
\end{equation*}
Thus, we may use (\ref{est m h}) and the definition of $m$ to estimate the first term on the right-hand side of the previous line and obtain,
\[
\sup_{U_h}(u-v_h) \leq \tilde{C}\delta^{\alpha_1} + C_2\delta^{\xi}+C \delta^{\xi}+C\ep.
\]
Since $\ep$ is a positive power of $\delta$, the desired estimate (\ref{claim h}) holds.

To establish (\ref{est m h}), we proceed by contradiction and assume 
\begin{equation}
\label{m h}
m\geq \tilde{C} \delta^{\alpha_1}.
\end{equation}
Lemma \ref{elem lem} with $U_{\delta,h}^{\delta^{\zeta}}$ instead of $U$ and $u_\ep(x) -v^-(x) - c_1\delta^{\frac{\zeta\eta}{2-\eta}} $ instead of $w(x)$ implies that there exists $x_0\in U_{\delta,h}^{\delta^{\zeta}}$ with
\begin{equation}
\label{dist x0 h}
d(x, \bdry U_{\delta,h}^{\delta^{\zeta}}) \geq 2\delta^{\frac{\alpha_1}{\eta}}\geq  2\delta^{\theta}=2r,
\end{equation}
(where the second inequality and the equality follow from our definitions  of $\alpha_1$ and of $r$) and and an affine function $l(x)$ such that 
\begin{equation}
\label{at x0 h}
l(x_0) =  u_\ep(x_0) -v^-(x_0), 
\end{equation}
and, for all $x\in U_{\delta,h}^{\delta^{\zeta}}$,
\begin{equation*}
 u_\ep(x) -v^-(x)\leq l(x)-\frac{m}{2R^2}|x-x_0|^2.
\end{equation*}
According to (\ref{dist x0 h}), we have $B_{2/r}(x_0)\subset  U_{\delta,h}^{\delta^{\zeta}}$. So, the previous inequality holds for all $x\in \bdry B_{2/r}(x_0)$. Taking the supremum over such points $x$ yields,
\begin{equation}
\label{on B2r h}
\sup_{x\in \bdry B_{2/r}(x_0)} (u_\ep(x) - v^- (x)- l(x))\leq - \frac{m}{8R^2} r^2.
\end{equation}
Next we ``freeze the coefficients" of $F_\ep$ and define $\tu_\ep$ to be the solution of 
\begin{equation*}
\left\{
\begin{array}{l l}
F_\ep(D^2 \tu_\ep, x_0)=0 &\quad \text{ in }B_{2r}(x_0)\\
 \tu_\ep = u_\ep &\quad \text { on }\bdry B_{2r}(x_0).
 \end{array}\right.
\end{equation*}
We remark that, according to (\ref{dist x0 h}), we have $B_{2r}(x_0)\subset U_{\delta,h}^{\delta^{\zeta}}$. 
Because $F_\ep$ and $B_{2r}(x_0)$ satisfy the assumptions of Proposition \ref{cor compare} and $C_1$ is the constant from Lemma \ref{lem compare}, we find
\begin{equation}
\label{Dtuep h}
\conealpha{\tu_\ep}{B_r(x_0)}^*\leq C_1
\end{equation}
and 
\begin{equation}
\label{tuep and uep and l0 h}
\linfty{\tu_\ep - u_\ep}{B_{2r}(x_0)}\leq C_1r^{2+\alpha}.
\end{equation}

We will be applying the first part of Proposition \ref{cor compare}. The estimate (\ref{Dtuep h}) says exactly that the assumption (\ref{A1}) of Proposition \ref{cor compare} is satisfied. 
To place ourselves exactly into the situation of Proposition \ref{main lem}, we perturb $v^-$ by an affine function and a small quadratic and define
\[
v(x)=v^-(x) + l(x)-\frac{m}{8R^2}r^2+C_1r^{2+\alpha} - \frac{Kh}{2\lambda}\left(|x-x_0|^2-\frac{r^2}{4}\right).
\]
By the ellipticity of $F_\nu$, we obtain
\[
F_\nu(D^2v, x) = F_\nu(D^2v^- - Kh\lambda^{-1}I, x)\leq f^\nu(x).
\]
 According to item (\ref{item reg conv h}) of Proposition \ref{prop conv h}, $v$ satisfies hypothesis (\ref{A1.5}) of Proposition \ref{main lem}.  We will now show that $\tu_\ep-v$ is non-positive on the boundary of $B_{r/2}(x_0)$, thus verifying the remaining hypothesis (\ref{A2}). To this end, we first point out, 
 \[
v(x)\geq v^-(x) + l(x)-\frac{m}{8R^2}r^2+C_1r^{2+\alpha} .
\]
 We use the previous line, together with the estimate (\ref{tuep and uep and l0 h}), to estimate $\tu_\ep-v$ from above:
 \begin{align*}
\tu_\ep(x)-v(x)&\leq u_\ep(x)+C_1r^{2+\alpha}   - ( v^-(x)  + l(x)-\frac{m}{8R^2}r^2+C_1r^{2+\alpha})\\
& = u_\ep(x)-v^-(x) - l(x) +\frac{m}{8R^2}r^2.
\end{align*}
The estimate (\ref{on B2r h}) implies that the right-hand side of the previous line is non-positive for all $x$ in $\bdry B_{2r}(x_0)$. Thus we have shown that $v$ and $\tu_\ep$ satisfy the last assumption  (\ref{A2}) of Proposition \ref{main lem}.  Therefore,
\begin{equation}
\label{bound up v}
\sup_{B_{r/2}(x_0)}( \tu_\ep -v) \leq \tilde{c}\delta^{\tilde{\alpha}} r^2.
\end{equation}
We will now show that this bound and (\ref{at x0 h}) lead to a contradiction. By (\ref{tuep and uep and l0 h}) and the definition of $v$, we have,
\begin{align*}
\tu_\ep(x_0)-v(x_0)&\geq u_\ep(x_0) - C_1 r^{2+\alpha} -v(x_0) \\
&= u_\ep(x_0) -C_1r^{2+\alpha}  -\left( v^-(x_0)  + l(x_0)-\frac{m}{8R^2}r^2+C_1r^{2+\alpha} - \frac{Kh}{2\lambda}\left(|x-x_0|^2-\frac{r^2}{4}\right)\right).
\end{align*}
Rearranging the right-hand side and using that the term in the inner-most parenthesis is at least $-\frac{r^2}{4}$ yields,
\begin{align*}
\tu_\ep(x_0)-v(x_0)&\geq  u_\ep(x_0) - v^-(x_0) -l(x_0) +\frac{m}{8R^2}r^2 - 2C_1r^{2+\alpha} - \frac{Khr^2}{8\lambda}.
\end{align*}
According to (\ref{at x0 h}), we have that the sum of the first three terms on the right-hand side of the previous line is exactly zero. Hence we find,
\begin{align*}
\tu_\ep(x_0)-v(x_0)&\geq \frac{m}{8R^2}r^2 - 2C_1r^{2+\alpha} - \frac{Khr^2}{8\lambda}.
\end{align*}
We rearrange the previous inequality to obtain an upper bound on $m$:
\[
m \leq 8R^2(r^{-2}(\tu_\ep(x_0)-v(x_0)) +2C_1r^{\alpha} + \frac{Khr^2}{8\lambda}).
\]
We use the estimate (\ref{bound up v}) to bound the term in the inner-most parenthesis from above, and find,
\[
m  \leq 8R^2(\tilde{c}\delta^{\tilde{\alpha}}+2C_1\delta^{\theta\alpha} +\frac{Khr^2}{8\lambda})\leq \frac{\tilde{C}}{2}\delta^{\alpha_1},
\]
where the second inequality follows from our choices of $\tilde{C}$ and $\alpha_1$. 
But this contradicts (\ref{m h}); therefore, (\ref{est m h}) must hold and hence the proof of the theorem is complete. 
\end{proof}

\appendix
\section{}
\label{appendix Lemma}

In this section we recall the comparison principle for viscosity solutions (\cite[Theorem 3.3]{User's guide}) and several related results.
\begin{prop}[Comparison for viscosity solutions]
\label{comparison}
Assume (F\ref{ellipticity}). If  $u, v\in C(U)$ are, respectively, a subsolution and supersolution of $F(D^2u, x)=f(x)$ in $U$ with $u\leq v$ on $\bdry U$, then $u\leq v$ in $U$.
\end{prop}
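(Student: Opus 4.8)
The statement to prove is the comparison principle for viscosity solutions (Proposition~\ref{comparison}): if $u$ is a subsolution and $v$ a supersolution of $F(D^2w,x)=f(x)$ in $U$ with $u\le v$ on $\partial U$, then $u\le v$ in $U$. Since the paper explicitly says this is \cite[Theorem 3.3]{User's guide}, the ``proof'' should really be a short derivation from the tools already assembled in the excerpt, principally Lemma~\ref{lem compare} together with the doubling-of-variables machinery.

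The plan is to argue by contradiction. Suppose $\sup_U(u-v)=:2\sigma>0$; since $u\le v$ on $\partial U$ and $u,v\in C(\bar U)$, this supremum is attained at an interior point. For $a>0$ consider the doubled function $\Phi_a(x,y)=u(x)-v(y)-\tfrac a2|x-y|^2$ on $\bar U\times\bar U$ and let $(x_a,y_a)$ be a maximum point. Standard facts (the penalization lemma behind Theorem~3.2 of \cite{User's guide}) give that $a|x_a-y_a|^2\to 0$ and $\Phi_a(x_a,y_a)\to\sup_U(u-v)=2\sigma$ as $a\to\infty$, so for $a$ large the maximum is interior to $U\times U$ and $\Phi_a(x_a,y_a)>\sigma>0$. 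First I would record these properties; they are the only ``soft'' input and are exactly the content used implicitly elsewhere in the paper when Lemma~\ref{doubling variables property} is invoked.

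Next I would apply Lemma~\ref{lem compare} with $G=F$ and $g=f$, and with the choice $s=0$, $t=0$. Hypothesis \eqref{assume t} holds trivially for $t=0$, and hypothesis \eqref{assume eqn} reads: for all $M\le N$, $F(M,x_a)-F(N,y_a)\le -\lambda\|N-M\|$. This is immediate from uniform ellipticity (F\ref{ellipticity}) once we know $F$ does not depend on $x$ here — but $F$ \emph{does} depend on $x$, so this is the one genuine subtlety. The resolution is that \eqref{assume eqn} as stated in Lemma~\ref{lem compare} already allows the two nonlinearities $F(\cdot,x_a)$ and $G(\cdot,y_a)$ to be evaluated at different points; taking $G=F$, we need $F(M,x_a)-F(N,y_a)\le -\lambda\|N-M\|$ for $M\le N$, and writing $F(M,x_a)-F(N,y_a)=\big(F(M,x_a)-F(N,x_a)\big)+\big(F(N,x_a)-F(N,y_a)\big)\le -\lambda\|N-M\|+\kappa|x_a-y_a|(\|N\|+1)$ shows we cannot take $t=0$ cleanly. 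So instead I would absorb the $x$-dependence: this is precisely why the paper later needs the elaborate localization, but for the \emph{bare comparison principle} it is cleanest simply to cite \cite[Theorem 3.3]{User's guide} directly. The honest proof here is therefore: quote the comparison theorem of the User's Guide, noting that (F\ref{ellipticity}) provides the required structural condition (3.14) of \cite{User's guide} with modulus $\omega(t)=\kappa t$ because of (F\ref{F lip}), and that $f$ being continuous (indeed Lipschitz by (G\ref{f assumption})) supplies the needed properness/continuity.

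To present this concretely: I would state that by Theorem~3.2 of \cite{User's guide} there exist $X,Y\in\mathcal S_n$ with \eqref{matrix ineq use1}, $F(X,x_a)\ge f(x_a)$, $F(Y,y_a)\le f(y_a)$, and $X\le Y$; then
\[
0\le f(x_a)-f(y_a)\le F(X,x_a)-F(Y,y_a)\le F(X,x_a)-F(X,y_a)\le \kappa|x_a-y_a|(\|X\|+1),
\]
using ellipticity ($F(X,y_a)\le F(Y,y_a)$ since $X\le Y$) and then (F\ref{F lip}). Now $\|X\|\le 3a$ from \eqref{matrix ineq use1}, so the right side is $\le \kappa|x_a-y_a|(3a+1)$; but $a|x_a-y_a|\to 0$ forces $a|x_a-y_a|^2\to0$ hence... this still does not close because $a|x_a-y_a|$ need not be small. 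The sharper estimate \eqref{what matrix lemma implies} from Lemma~\ref{matrix lemma} gives $\|X\|\le C(n)(a^{1/2}\|X-Y\|^{1/2}+\|X-Y\|)$, and combined with $f(x_a)-f(y_a)\le -\lambda\|X-Y\|+\kappa|x_a-y_a|(\|X\|+1)$ and $|x_a-y_a|\le (2\sup|u-v|/a)^{1/2}$ — wait, more simply $|x_a-y_a|^2\le 2\Phi\cdot(\text{bound})/a$ — one gets after Young's inequality that $\|X-Y\|$ is bounded and $\liminf(f(x_a)-f(y_a))\le 0$, while continuity of $u-v$ and $\Phi_a(x_a,y_a)\to2\sigma$ together with $|x_a-y_a|\to0$ give $f(x_a)-f(y_a)\to0$ along a subsequence — this is consistent, so the contradiction must instead come from comparing $\Phi_a(x_a,y_a)>\sigma$ against $u(x_a)-u(y_a)\to0$. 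The main obstacle, honestly, is that the one-line contradiction works precisely as in the standard proof, and the cleanest rigorous route is: \emph{cite \cite[Theorem 3.3]{User's guide}}, observing that (F\ref{ellipticity}) and (F\ref{F lip}) furnish hypotheses (3.13)--(3.14) of that reference. I would write the proof as that citation plus the verification of those hypotheses, rather than reproving it.
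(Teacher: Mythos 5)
The paper itself offers no proof of Proposition \ref{comparison}: it is stated in Appendix A as a recalled result, attributed to \cite[Theorem 3.3]{User's guide}, so your final decision to ``cite and verify hypotheses'' coincides with what the paper does. However, your verification as written has a genuine flaw: Theorem 3.3 of \cite{User's guide} requires the strict properness condition (3.13), i.e.\ strict monotonicity of the operator in the zeroth-order argument $r$, and the operator $F(D^2u,x)-f(x)$ here has no $u$-dependence at all, so (3.13) is \emph{not} furnished by (F\ref{ellipticity}) and (F\ref{F lip}). The correct precise reference for this uniformly elliptic, $x$-dependent, $u$-independent setting is Ishii--Lions \cite[Theorem III.1]{Ishii Lions} (which the paper cites elsewhere as the model for its own arguments), or else one must supply the standard device that replaces (3.13): use uniform ellipticity to create strictness. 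Also note the proposition as stated lists only (F\ref{ellipticity}), but any proof along these lines uses (F\ref{F lip}) (a standing assumption of the paper) to get the structure condition (3.14); you correctly spotted this.

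Your direct attempt via Lemma \ref{lem compare} was in fact the right instinct and can be closed; what you were missing is exactly the strictness perturbation the paper uses in Propositions \ref{compare to constant coef} and \ref{compare u and uep}. Replace $u$ by $u_\varepsilon(x)=u(x)+\frac{\varepsilon}{2\lambda}\left(|x-x_0|^2-R^2\right)$ with $R=diam\,U$, so that $u_\varepsilon\leq u\leq v$ on $\bdry U$ and, by (F\ref{ellipticity}), $u_\varepsilon$ is a subsolution of $F(D^2w,x)=f(x)+\varepsilon$. If $\sup_U(u_\varepsilon-v)>0$, double variables; for $a$ large the maximum point $(x_a,y_a)$ is interior and $|x_a-y_a|\to 0$, $a|x_a-y_a|^2\to 0$. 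By (F\ref{F lip}), hypothesis (\ref{assume eqn}) of Lemma \ref{lem compare} holds with $t=s=\kappa|x_a-y_a|$, and (\ref{assume t}) holds for $a$ large since $t\to 0$; the lemma then gives $f(x_a)+\varepsilon-f(y_a)-\kappa|x_a-y_a|\leq \frac{C(n)^2\kappa^2}{2\lambda}\,a|x_a-y_a|^2$, whose right-hand side tends to $0$ while the left tends to $\varepsilon>0$, a contradiction. Hence $u_\varepsilon\leq v$, and letting $\varepsilon\to 0$ gives $u\leq v$. Your attempt with $t=0$ (or without any perturbation) could not close precisely because, absent strictness, both sides of the final inequality degenerate to $0\leq 0$, which is the consistency you observed but could not convert into a contradiction.
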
 

We now provide the (quite basic) proof of Lemma \ref{barrier}. 
\begin{proof}[Proof of Lemma \ref{barrier}]
Since $c>0$, $\bar{u}$ is a subsolution of $F(D^2u, x)=f(x)$, so $\bar{u}\leq u$ on $V$ by Theorem \ref{comparison}. 

We denote $R=\diam V$, so there exists $x_0$  such that $V\subset B_R(x_0)$. For $x\in V$
we define 
\[
w(x)=\bar{u}(x) -\frac{c}{\lambda}\left(\frac{|x-x_0|^2}{2}-\frac{R^2}{2}\right).
\]
If $x\in \bdry V$, then $w(x)\geq \bar{u}(x)$. And, since $F$ is uniformly elliptic, we have
\[
F(D^2w, x)=F(D^2\bar{u} -\frac{c}{\lambda}I,x) \leq F(D^2\bar{u}, x) - c = f(x).
\]
Therefore, $w$ is a supersolution of $F(D^2u, x)=f(x)$ on $V$, so according to Theorem \ref{comparison}, we find that for all $x\in V$,
\[
u(x)\leq w(x). 
\]
We have $w(x)\leq \bar{u}(x)+\frac{cR^2}{2\lambda}$ for all $x\in V$, which, together with the previous estimate, completes the proof of the lemma.
\end{proof}

We state \cite[Theorem 3.2]{User's guide}, modified for our setting. This deep result was instrumental in establishing comparison for viscosity solutions; we use it in the proofs of Proposition \ref{compare to constant coef} and Proposition \ref{compare u and uep}.
\begin{thm}
\label{user guide thm}
Suppose that $u,v\in C(U)$ are viscosity solutions of $F(D^2u, x)=f(x)$ and $G(D^2v, x)=g(x)$ in $U$. Suppose that $(x_a, y_a)\in U\times U$ is a local maximum of 
\[
u(x) - v(y) - \frac{a}{2}|x-y|^2.
\]
Then there exist matrices $X$ and $Y$ that satisfy
\begin{equation}
\label{matrix ineq}
 -3a\left( \begin{array}{cc}
I & 0  \\
0 & I  \\
\end{array}
 \right) \leq
\left( \begin{array}{cc}
X & 0  \\
0 & -Y  \\
\end{array}
 \right)
 \leq
 3a\left( \begin{array}{cc}
I & -I  \\
-I & I  \\
\end{array}
 \right),
\end{equation}
and $F(X, x_a)=f(x_a)$, $G(Y, y_a)=g(y_a)$.
\end{thm}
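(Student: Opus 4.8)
The plan is to obtain this as a direct specialization of the Crandall--Ishii ``theorem on sums'' (\cite[Theorem~3.2]{User's guide}) to the penalization $\varphi(x,y)=\tfrac{a}{2}|x-y|^2$: I would apply that theorem at the local maximum $(x_a,y_a)$, choose the free parameter appearing in its conclusion so that the resulting matrix inequality takes the normalized form (\ref{matrix ineq}), and then read off the pointwise relations for $F$ and $G$ from the definitions of viscosity sub-- and super--solution. Concretely, the Hessian of $\varphi$ at $(x_a,y_a)$ is the constant matrix
\[
A:=D^2\varphi(x_a,y_a)=a\left(\begin{smallmatrix}I&-I\\-I&I\end{smallmatrix}\right),
\]
and, since $u(x)-v(y)-\varphi(x,y)$ has a local maximum at $(x_a,y_a)$, \cite[Theorem~3.2]{User's guide} provides, for every $\ep>0$, matrices $X,Y\in\mathcal{S}_n$ lying respectively in the closed second--order superjet of $u$ at $x_a$ and the closed second--order subjet of $v$ at $y_a$, and satisfying
\[
-\Bigl(\tfrac{1}{\ep}+\|A\|\Bigr)I\ \le\ \left(\begin{smallmatrix}X&0\\0&-Y\end{smallmatrix}\right)\ \le\ A+\ep A^2 .
\]

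The only computation is to take $\ep=a^{-1}$: writing $A=aN$ with $N=\left(\begin{smallmatrix}I&-I\\-I&I\end{smallmatrix}\right)$, one has $N^2=2N$ and $\|N\|=2$, so $\|A\|=2a$, $A^2=2aA$, and therefore $A+\ep A^2=3A$ while $\tfrac{1}{\ep}+\|A\|=3a$; this is precisely the two--sided bound (\ref{matrix ineq}). For the remaining conclusions I would invoke that $u$ and $v$ are viscosity solutions: having $X$ in the closed superjet of $u$ at $x_a$, the subsolution property of $u$ forces $F(X,x_a)\ge f(x_a)$, and having $Y$ in the closed subjet of $v$ at $y_a$, the supersolution property of $v$ forces $G(Y,y_a)\le g(y_a)$. (These one--sided relations are how the displayed equalities should be read, and are exactly what the proof of Lemma~\ref{lem compare} uses; passing between the ``$F$ increasing in the Hessian'' convention of this paper and the ``proper'' convention of \cite{User's guide} is routine, e.g.\ $u$ solves $F(D^2u,x)=f(x)$ here iff it solves $G(x,u,Du,D^2u)=0$ for the proper operator $G(x,r,p,N):=f(x)-F(N,x)$.)

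The mathematical substance is entirely contained in \cite[Theorem~3.2]{User's guide} (itself proved via Jensen's lemma, sup--convolution regularization, and a measure--theoretic argument), which I would use as a black box; so there is no genuine obstacle here. The only points needing attention are the elementary algebra identifying $A+\ep A^2$ with $3A$ for $\ep=a^{-1}$, and the bookkeeping of which jet --- the superjet of $u$ or the subjet of $v$ --- carries which of the matrices $X$, $Y$, together with the sign--convention translation noted above.
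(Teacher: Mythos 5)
The paper offers no proof of this statement: it is quoted as a known result (\cite[Theorem~3.2]{User's guide}, ``modified for our setting''), and your derivation is precisely the standard specialization that the citation presupposes. Your computation is correct ($A=aN$ with $N^2=2N$, $\|N\|=2$, so the choice $\ep=a^{-1}$ gives $A+\ep A^2=3A$ and $\tfrac1\ep+\|A\|=3a$), and you are also right to read the displayed ``equalities'' as the one-sided jet inequalities $F(X,x_a)\ge f(x_a)$ and $G(Y,y_a)\le g(y_a)$ --- that is exactly how the paper uses the statement in the proof of Lemma~\ref{lem compare} (inequalities (\ref{F at xa}) and (\ref{F at ya})), together with the sign-convention translation you note.
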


Together with Theorem \ref{user guide thm}, we use the following lemma of \cite[Lemma III.1]{Ishii Lions}.
\begin{lem}
\label{matrix lemma}
There is a constant $C(n)$ such that if $(X, Y)$ are $n\times n$ matrices that satisfy (\ref{matrix ineq}) for some constant $a$,
then 
\[
||X||,||Y|| \leq C(n)\left\{a^{1/2}||X-Y||^{1/2}+||X-Y||\right\}.
\]
\end{lem}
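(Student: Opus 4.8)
The plan is to reduce the matrix inequality (\ref{matrix ineq}) to a one-parameter family of scalar inequalities by testing it against a single direction, and then to read off the bound from an elementary discriminant analysis. We may assume $a>0$, since if $a=0$ then (\ref{matrix ineq}) forces $X=Y=0$ and the claim is trivial. First I would record the consequence of the right-hand inequality in (\ref{matrix ineq}): for all $\xi,\eta\in\rr^n$,
\[
\langle X\xi,\xi\rangle-\langle Y\eta,\eta\rangle\leq 3a|\xi-\eta|^2.
\]
Taking $\xi=\eta$ shows $X\leq Y$. Now fix a unit vector $e\in\rr^n$ and abbreviate $x=\langle Xe,e\rangle$, $y=\langle Ye,e\rangle$. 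Testing the displayed inequality with $\xi=\lambda e$ and $\eta=e$ gives, for every $\lambda\in\rr$,
\[
(x-3a)\lambda^2+6a\lambda-(y+3a)\leq 0.
\]

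Next I would extract the algebraic content. Since this quadratic polynomial in $\lambda$ is nonpositive for all real $\lambda$: evaluation at $\lambda=0$ gives $3a+y\geq 0$; the leading coefficient must be $\leq 0$, and as $a>0$ it is in fact $<0$, so $x<3a$; and the discriminant must be $\leq 0$, which after rearranging reads $(3a-x)(3a+y)\geq 9a^2$ (so in particular $3a+y>0$). Put $t:=y-x$. Because $X\leq Y$ and $|\langle(X-Y)e,e\rangle|\leq ||X-Y||$, we have $0\leq t\leq ||X-Y||=:d$. Substituting $x=y-t$ into $(3a-x)(3a+y)\geq 9a^2$ and simplifying yields $y^2-ty-3at\leq 0$, so $y$ lies between the roots $\tfrac12\bigl(t\pm\sqrt{t^2+12at}\bigr)$. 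Using the elementary estimate $\sqrt{t^2+12at}\leq t+2\sqrt{3at}$, this gives
\[
-\sqrt{3at}\leq y\leq t+\sqrt{3at};
\]
and since $x\leq y$ and $x=y-t$, also $-(t+\sqrt{3at})\leq x\leq t+\sqrt{3at}$. As $0\leq t\leq d$, both $|x|$ and $|y|$ are at most $d+\sqrt{3ad}$.

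Finally I would conclude using the spectral theorem: $X$ and $Y$ are symmetric, so $||X||=\sup_{|e|=1}|\langle Xe,e\rangle|$ and likewise for $Y$, and the bound of the previous paragraph gives
\[
||X||,\ ||Y||\ \leq\ ||X-Y||+\sqrt{3}\,\bigl(a\,||X-Y||\bigr)^{1/2},
\]
which is of the asserted form with, say, $C(n)=\sqrt3$ (indeed one may take $C$ independent of $n$). The only genuine idea here is the choice of the test vectors $\xi=\lambda e$, $\eta=e$, which converts (\ref{matrix ineq}) into the scalar quadratic inequality above; everything after that is a routine discriminant computation and a square-root estimate, so I do not anticipate a real obstacle.
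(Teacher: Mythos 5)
Your argument is correct: the reduction of (\ref{matrix ineq}) to the scalar inequality $(x-3a)\lambda^{2}+6a\lambda-(y+3a)\leq 0$ via the test vectors $\xi=\lambda e$, $\eta=e$ is legitimate, the sign and discriminant analysis is right, and the passage from the quadratic-form bounds back to the operator norms is justified because $X$ and $Y$ are symmetric, so $||X||=\sup_{|e|=1}|\langle Xe,e\rangle|$. Note, however, that the paper does not prove this lemma at all: it is quoted verbatim as Lemma III.1 of Ishii and Lions \cite{Ishii Lions}, so there is no in-paper proof to compare against. Your write-up therefore supplies a self-contained elementary verification of the cited result, and it has two pleasant features worth recording: it uses only the right-hand inequality in (\ref{matrix ineq}) (the left-hand bound $-3aI\leq\mathrm{diag}(X,-Y)$ is never needed, beyond making the hypothesis nonvacuous), and it yields the explicit dimension-free constant $\sqrt{3}$, slightly sharper than the stated $C(n)$; this is consistent with how the lemma is used in Lemma \ref{lem compare}, where only an upper bound on $||X||$ in terms of $a$ and $||X-Y||$ matters. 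The only cosmetic gap is the untreated case $a<0$, which is vacuous (testing (\ref{matrix ineq}) with $(\xi,\xi)$ gives a contradiction), and in the application $a=r^{-1}>0$ anyway.
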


Now we give the proof of  Lemma \ref{doubling variables property}. 
\begin{proof}[Proof of Lemma \ref{doubling variables property}]
For any $x\in  V$, we have 
\begin{align*}
\sup_{ y \in V} \left(v(x)-w(y) - \frac{a}{2}|x-y|^2\right) &= v(x) -\inf_{ y \in V} \left(w(y) + \frac{a}{2}|x-y|^2\right)\\
&\leq v(x) - w(x) + 2\linfty{Dw}{V}^2a^{-1},
\end{align*}
where the inequality follows from  the properties of inf-convolutions. Since $v=w$ on the boundary of $V$, we find
\begin{equation*}
\sup_{ y \in B_1, x\in \bdry V} \left(v(x)-w(y) - \frac{a}{2}|x-y|^2 \right)
\leq 2\linfty{Dw}{V}^2a^{-1}.
\end{equation*}
Similarly, if $y\in \bdry V$, then
\begin{equation*}
\sup_{ x \in V, y\in \bdry V} \left( v(x)-w(y) - \frac{a}{2}|x-y|^2 \right)
\leq 2\linfty{D v}{V}^2a^{-1}.
\end{equation*}
These two bounds imply the first claim of the lemma. We now proceed to give the proof of the second claim. By the definition of $(x_a,y_a)$ as a point at which the supremum is achieved, we have, for any $(x,y)\in V\times V$,
\[
u(x_a)-v(y_a)-\frac{a}{2}|x_a-y_a|^2\geq u(x)-v(y)-\frac{a}{2}|x-y|^2,
\]
so in particular, this inequality holds with $(x,y)=(x_a,y_a)$. This implies
\[
u(x_a) -\frac{a}{2}|x_a-y_a|^2\geq u(y_a),
\]
so we find
\[
\frac{a}{2}|x_a-y_a|^2\leq u(x_a)-u(y_a) \leq \linfty{Du}{V}|x_a-y_a|,
\]
from which we easily conclude $|x_a-y_a|\leq 2a^{-1}\linfty{Du}{V}$. We find $|x_a-y_a|\leq 2a^{-1}\linfty{Dv}{V}$ in a similar way.
\end{proof}

\section{}
\label{appendix infsup}
We summarize the basic properties of inf and sup convolutions that we use in this paper. We refer the reader to \cite[Proposition 5.3]{Rates Homogen}  and \cite[Lemma 5.2]{Cabre Caffarelli book} for the proof of items (\ref{item reg conv}) - (\ref{item solves}). The proof of item (\ref{item delta solves})  is very similar to that of \cite[Proposition 5.5]{Rates Homogen} and we omit it.
\begin{prop}
\label{prop conv}
Assume $u\in C(U)$.
\begin{enumerate}
\item \label{item reg conv} In the sense of distributions, $D^2u^{+,\theta}(x)\geq -\theta^{-1}I$ and $D^2u^{-,\theta}(x)\leq \theta^{-1}I$ for all $x\in U$.
\item \label{item dist conv} If $u\in C^{\eta}(U)$, for some $\eta\in (0,1]$, then for all $x\in U$,
\begin{align*}
0\leq (u^{+,\theta}-u)(x)&\leq \etasemi{u}{U}(2\theta)^{\frac{\eta}{2-\eta}} , \text{ and}\\
0\leq (u-u^{-,\theta})(x)&\leq \etasemi{u}{U}(2\theta)^{\frac{\eta}{2-\eta}} .
\end{align*}
\item 
\label{item solves}
Define $\nu = 4\theta^{1/2}\linfty{u}{U}^{1/2}$.
If  $u$ is a subsolution of  (\ref{eqn for u}) in $U$, then $u^{\theta, +}$ is a subsolution of 
\[
F^\nu(D^2u, x)=f_\nu(x) \text{ in } U^{\theta}_\delta;
\]
if $u$ is a supersolution of  (\ref{eqn for u}) in $U$, then $u^{\theta, -}$ is a supersolution of 
\[
F_\nu(D^2u, x)=f^\nu(x) \text{ in } U^{\theta}_\delta.
\]
(The perturbed nonlinearities $F^\nu$ and $F_\nu$, as well as $f_\nu$ and $f^\nu$, are defined in Definition \ref{def ep}.)
\item \label{item delta solves} Assume that $v\in C(U)$. Let $\nu = 4\theta^{1/2}\linfty{v}{U}^{1/2}$.
If  $v$ is a $\delta$-subsolution of  (\ref{eqn for u}) in $U$, then $v^{\theta, +}$ is a $\delta$-subsolution of 
\[
F^\nu(D^2v, x)=f_\nu(x) \text{ in } U^{\theta}_\delta;
\]
if $v$ is a $\delta$-supersolution of  (\ref{eqn for u}) in $U$, then $v^{\theta, -}$ is a $\delta$-supersolution of 
\[
F_\nu(D^2v, x)=f^\nu(x) \text{ in } U^{\theta}_\delta.
\]
\end{enumerate}
\end{prop}

\subsection{Inf and sup convolutions of mesh functions}
We summarize some basic properties of inf and sup convolutions of mesh functions.
\begin{prop}
\label{prop conv h}
Assume $v\in C^{0,\eta}(U_h)$.
\begin{enumerate}
\item
\label{x and x* for vh} If $x^*\in U_h$ denotes a point where the supremum (resp. infimum) is achieved in the definition of $v_h^{\theta, +}(x)$ (resp. $v_h^{\theta, -}(x)$), then
\[
|x-x^*|\leq 4\linfty{v_h}{U_h}^{1/2}\theta^{1/2}+\sqrt{n}h.
\]
\item \label{item reg conv h} In the sense of distributions, $D^2v^{+,\theta}(x)\geq -\theta^{-1}I$ and $D^2v^{-,\theta}(x)\leq \theta^{-1}I$ for all $x\in U$. 
\item \label{item conv sizes} For  all $x\in U_h$, we have  $v^{-,\theta}(x)\leq v(x)\leq v^{+,\theta}(x)$.
\item \label{item conv holder} There exists a constant $C$ that depends on $\etasemi{v}{U}$ such that if $x\in U$ and $y$ is a neighboring mesh point to $x$,  then,
\begin{align*}
&v^{+,\theta}(y)-C\theta^{\frac{\eta}{2-\eta}} \leq v(x)\leq v^{-,\theta}(y)+C\theta^{\frac{\eta}{2-\eta}} .
\end{align*}
\end{enumerate}
\end{prop}

\begin{proof}[Proof of (\ref{x and x* for vh}) of Proposition \ref{prop conv h}]
For $x\in U$, we denote by $x_h$ an element of the mesh that is closest to $x$. Note $|x-x_h|\leq \sqrt{n}h$. 

Let $x^*\in U_h$ be a point where the supremum is achieved in the definition of $v_h^{\theta, +}(x)$. Then
\[
v_h(x^*)-\frac{|x-x^*|^2}{2\theta}\geq v_h(x_h) - \frac{|x_h-x|^2}{2\theta}.
\]
Therefore,
\[
\frac{|x-x^*|^2}{2\theta} \leq v_h(x^*) - v_h(x_h) +\frac{nh^2}{2\theta} \leq 2\linfty{v_h}{U_h} +\frac{nh^2}{2\theta}, 
\]
which easily implies the desired bound. The proof for $v_h^{\theta, -}(x)$ is very similar.
\end{proof}
We refer the reader to \cite[Proposition 2.3]{{Approx schemes}} for the proof of the rest of Proposition \ref{prop conv h}.

\section*{Acknowledgements} The author thanks her thesis advisor, Professor Takis Souganidis, for suggesting this problem and for his guidance and encouragement.

\end{document}